\title[$L^2$-Exponential decay of Langevin dynamics]{Weighted $L^2$-contractivity of Langevin dynamics with singular potentials}\author{Evan Camrud}
\address{Department of Mathematics\\Iowa State University\\Ames, IA USA\\ email:\texttt{ecamrud@iastate.edu}}
\author{David P. Herzog}
\address{Department of Mathematics\\Iowa State University\\Ames, IA USA\\ email:\texttt{dherzog@iastate.edu}}
\author{Gabriel Stoltz}
\address{CERMICS\\Ecole des Ponts\\ Marne-la-Vall\'{e}e\\ France \& MATHERIALS team-project\\ Inria Paris, France\\ email:  \texttt{gabriel.stoltz@enpc.fr}}
\author{Maria Gordina}
\address{Department of Mathematics \\University of Connecticut\\ Storrs, CT USA \\email:  \texttt{maria.gordina@uconn.edu}}
\newtheorem{Lemma}{Lemma}
\newtheorem{Theorem}{Theorem}\newtheorem{Proposition}{Proposition}
\newtheorem{Corollary}{Corollary}
\newtheorem{Definition}{Definition}
\newtheorem{Remark}{Remark}
\newtheorem{Assumption}{Assumption}
\newcommand{\eqn}[1]{\begin{displaymath}\begin{split}#1\end{split}\end{displaymath}}
\newcommand{\eqnn}[1]{\begin{equation}\begin{split}#1\end{split}\end{equation}}
\newcommand{\R}{\mathbf{R}}
\newcommand{\N}{\mathbf{N}}
\newcommand{\E}{\mathbf{E}}
\newcommand{\X}{\mathscr{X}}
\newcommand{\PP}{\mathbf{P}}
\newcommand{\B}{\mathcal{B}}
\renewcommand{\leq}{\leqslant}
\renewcommand{\geq}{\geqslant}
\newcommand{\rme}{\mathrm{e}}
\newcommand{\dps}{\displaystyle}
\newcommand{\sP}{\mathscr{P}}
\newcommand{\sPp}{\mathscr{P}^\perp}
\begin{document}
\maketitle

\begin{abstract}
  Convergence to equilibrium of underdamped Langevin dynamics is studied under general assumptions on the potential~$U$ allowing for singularities. By modifying the direct approach to convergence in~$L^2$ pioneered by F.~H\'erau and developed by Dolbeault, Mouhot and Schmeiser, we show that the dynamics converges exponentially fast to equilibrium in the topologies~$L^2(d\mu)$ and~$L^2(W^* d\mu)$, where $\mu$ denotes the invariant probability measure and $W^*$ is a suitable Lyapunov weight. In both norms, we make precise how the exponential convergence rate depends on the friction parameter~$\gamma$ in Langevin dynamics, by providing a lower bound scaling as~$\min(\gamma, \gamma^{-1})$. The results hold for usual polynomial-type potentials as well as potentials with singularities such as those arising from pairwise Lennard-Jones interactions between particles.  
\end{abstract}

\section{Introduction}

Langevin dynamics, and related stochastic differential equations dictating the evolution of physical systems at the atomistic scale, have long been a subject of interest in molecular dynamics simulation~\cite{hoogerbrugge1992simulating,cances2007theoretical,tuckerman2010statistical,durrant2011molecular,LM15} and more recently in machine learning~\cite{welling2011bayesian,teh2016consistency,li2016preconditioned,ChandraRohitash2017BNLv}. In these settings, estimates on the rate of convergence of the dynamics to equilibrium provide a rough measure for how long to run the corresponding algorithm in order to take random samples from the target Boltzmann--Gibbs distribution -- as quantified by bounds on the asymptotic variance of averages over trajectories, dictated by a Central Limit Theorem~\cite{Bhattacharya}. This has led to a number of works analyzing the precise nature of this convergence under various assumptions on the potential~$U$, both in the literature on statistical physics and in kinetic theory (where the typical dynamics is similar to the Fokker--Planck equation ruling the evolution of the law of the solutions of Langevin dynamics).

The first known result on the convergence of Langevin dynamics is due to Tropper~\cite{tropper1977ergodic}, who proved that the dynamics mixes when the Hessian of the potential is bounded. Later, the works of Wu~\cite{Wu_01}, Mattingly, Stuart, and Higham~\cite{MSH_02} and Talay~\cite{Talay_02} improved these results by proving convergence to equilibrium at an exponential rate for `polynomial-like' potentials, in total variation. Their estimates relied on the existence of a Lyapunov function of the form~$H(q,p)+c q\cdot p$ for a small constant $c>0$, where 
\begin{align}
\label{E:Ham}H(q,p)=\frac{|p|^2}{2}+U(q)
\end{align}
denotes the Hamiltonian of the system.  Additionally, in Talay's work~\cite{Talay_02}, exponential convergence in Sobolev spaces~$H^k(d\mu)$ for $k\geq 1$ as well as in weighted Sobolev spaces was obtained. 

These papers, as well as other seminal works such as~\cite{EH03,HN04,HN05,MN06}, later motivated the development of hypocoercivity by Villani~\cite{Villani_2009} where, in the case of Langevin dynamics, exponential convergence to equilibrium was established in $H^1(d\mu)$ for $C^2(\R^d)$ potentials for which~$\mu$ satisfies a Poincar\'e inequality, and such that the global bound 
\begin{align}
\label{eqn:1v2}
\left|\nabla^2 U\right|\leq C(1+|\nabla U|)
\end{align}
holds for some constant $C>0$.  There, `twisted gradients' are used to obtain contractive properties in $H^1(d\mu)$ which are ultimately transfered to $L^2(d\mu)$ by hypoelliptic regularization~\cite{Herau07,HP08}. Using a heuristic overdamped scaling analysis, and extending the convergence result by~H\'erau~\cite{Herau06}, Dolbeault, Mouhot and Schmeiser~\cite{DMS09,dolbeault2015hypocoercivity} established a convergence result similar to the one by Villani under the condition~\eqref{eqn:1v2}, but using a direct perturbative $L^2(d\mu)$ approach -- that is, instead of first obtaining a contraction in $H^1(d\mu)$ and then transferring it to~$L^2(d\mu)$, one can directly obtain the contraction in~$L^2(d\mu)$ by adding an appropriate perturbation to the norm.       

Although the growth condition~\eqref{eqn:1v2} is general, it does not allow for potentials with singularities. This is intuitively the case because the growth of the singular terms in $U$ increases after taking derivatives, so that~\eqref{eqn:1v2} cannot be satisfied. In this direction, the papers of Conrad and Grothaus~\cite{conrad2010construction} and Grothaus and Stilgenbauer~\cite{grothaus2015hypocoercivity} established a polynomial rate of convergence for time averages (along realizations of the stochastic dynamics) to ensemble averages (with respect to $\mu$) for potentials that include Lennard-Jones interactions. Later, adopting a Lyapunov approach and denoting by~$\mathscr{O}$ the open domain on which the potential is finite, exponential convergence to equilibrium was obtained in~\cite{CHMMS_17,HerMat_19} in a weighted total variation distance for a class of potentials belonging to~$C^\infty(\mathscr{O})$ satisfying the general condition: For all $\varepsilon>0$, there exists $C_\varepsilon>0$ such that
\begin{align}
\label{eqn:2v1}
\left|\nabla^2 U\right|\leq \varepsilon |\nabla U|^2+C_\varepsilon,
\end{align}
holds globally. In particular, it can be shown that this condition is satisfied by systems of~$N$ particles with positions~$q_1,\dots,q_N$, interacting via pairwise singular potentials such as the Lennard--Jones' potential
\begin{align*}
    U(q)=\sum_{1 \leq i < j \leq N}\left(\frac{A_{ij}}{|q_i-q_j|^{12}}-\frac{B_{ij}}{|q_i-q_j|^{6}}\right),
\end{align*}
or the Coulomb interaction in dimension $d\geq3$
\begin{align*}
    U(q)=\sum_{1 \leq i < j \leq N}\frac{A_{ij}}{|q_i-q_j|};
\end{align*}
see~\cite[Appendix]{BGH_19}. In particular, an explicit Lyapunov function of the form $\exp(\delta H+\psi)$ was constructed in~\cite{CHMMS_17,HerMat_19}, with $\delta$ small and $\psi$ an appropriate lower-order perturbation.  See also~\cite{LuMat_19} for an extension of this result to Coulomb potentials in spatial dimension $d=2$.  Nevertheless, explicit estimates on the convergence rate were not given, especially as they depend on key parameters of the dynamics, \emph{e.g.} the friction $\gamma>0$ or the dimension $d$. This motivated the work~\cite{BGH_19} which aimed at getting an explicit dependence of the rate on the dimension $d$, by making use of a certain weighted~$H^1$ topology where the weight satisfies a Lyapunov-type condition. See also~\cite{CGMZ_19} for a related work.  

The interest of working with the weighted topology $L^2(W^*\,d\mu)$ is that functions in this space have a faster decay at infinity than functions in~$L^2(d\mu)$ (see Remark~\ref{rmk:finer_topo} for a more precise discussion of this point). Exponential convergence in this topology then implies that the solution~$\phi$ to the Poisson equation~$-L\phi = g \in L^2(W^*\,d\mu)$ also belongs to~$L^2(W^*\,d\mu)$, and not just to~$L^2(d\mu)$. Since the solution~$\phi$ decays faster than typical functions in~$L^2(d\mu)$, it is expected that spectral Galerkin discretizations such as the ones considered in~\cite{roussel2018spectral} lead to smaller approximation errors, which turns out to be useful when solutions to Poisson equations are used in numerical methods, for instance to construct control variates~\cite{roussel2019perturbative}.

In terms of the scalings in the prefactors in the convergence bounds, we obtain estimates uniform with respect to the friction, except in the weighted setting when $\gamma \gg1$.  This `break down' makes sense intuitively because, when $\gamma \gg1$, the Lyapunov function we construct is needed in a shrinking part of space as the noise dominates in a growing part of the domain. Better estimates could maybe be obtained by recent approaches which share some similarities with the direct~$L^2$ approach we use here, but do not rely on changing the underlying scalar product, either by using some space-time Poincar\'e inequality as in~\cite{armstrong2019variational,CLW_19}, or by directly seeking estimates on the resolvent of the generator of the dynamics instead of looking for decay estimates of the evolution semigroup~\cite{cances2007theoretical}.

This paper is organized as follows.  In Section~\ref{sec:setting}, we introduce the technical assumptions made throughout the paper and state the main results.  Sections~\ref{sec:genresults} and~\ref{sec:scalings} contain the proofs of the main results assuming a key result, namely Proposition~\ref{prop:key}.  This proposition affords a number of formal manipulations there and is later proved in Section~\ref{sec:auxest} along with some important auxiliary estimates.

\section{Mathematical setting and statement of the main result}
\label{sec:setting}

\subsection{Setting and basic assumptions}

Throughout the paper, we study the Langevin stochastic differential equation
\begin{equation}
  \label{E:StochDiff}
  \left\{ \begin{aligned}
    dq_t&=p_t\, dt,\\
    dp_t&=-\nabla U(q_t)\, dt-\gamma p_t\, dt+\sqrt{2\gamma}\, dB_t .
  \end{aligned} \right.
\end{equation}
In the equation above, the unknowns are the vectors of positions $q_t\in \R^d$ and momentum $p_t\in\R^d$.  The parameter $\gamma>0$ is the friction constant while $B_t$ is a standard $d$-dimensional Brownian motion defined on a probability space $(\Omega, \mathscr{F},\mathbf{P}, \mathbf{E})$. The mapping $U:\R^d\to[0,+\infty]$ is the potential energy function. Note that we allow $U$ to take the value $+ \infty$ in the case when $U$ is singular, as in the situation of a Lennard--Jones or Coulomb interaction force. Throughout, $\mathscr{O}$ will denote the subset of $\R^d$ of positions~$q$ with finite potential energy:
\begin{align*}
  \mathscr{O}=\left\{q\in\R^d:U(q)<\infty\right\},
\end{align*}
while $\X$ denotes the corresponding state space
\begin{align*}
\X=\mathscr{O}\times\R^d
\end{align*}
for the process $x_t:=(q_t, p_t)$.  Depending on the context, we will use either $x$ or $(q,p)$ to denote an element of~$\X$.  

In order guarantee the existence and uniqueness of pathwise solutions for all finite times $t\geq 0$ and all initial conditions $x\in \X$, we make the following assumption on the potential~$U$. Once one makes this assumption, the existence and uniqueness of pathwise solutions follows immediately by first applying the standard iteration procedure to get locally-defined solutions in time; and then extending these local solutions to global ones by using the Hamiltonian $H$ as a basic type of Lyapunov function (see~\cite[Section~2.2]{BGH_19} or~\cite{Has_11, rey-bellet2006ergodic} for further details).

\begin{Assumption}
  \label{assump:basic}
  The potential $U:\R^d\to[0,+\infty]$ satisfies the following properties:
  \begin{itemize} 
  \item[(i)] $U\in C^\infty(\mathscr{O};[0,\infty))$,
  \item[(ii)] The set $\mathscr{O}$ is connected. Moreover, for any $k\in \N$, the open set
    \begin{align*}
      \mathscr{O}_k = \left\{q\in\R^d:U(q)<k \right\}
    \end{align*} 
    has a compact closure.
  \item[(iii)] The integral $\dps \int_{\mathscr{O}} \rme^{-U(q)} \, dq$ is finite.
  \item[(iv)] For any sequence $\{q_k\} \subset \mathscr{O}$ with $U(q_k)\rightarrow \infty$ as $k\rightarrow \infty$, it holds $|\nabla U(q_k)|\rightarrow \infty$ as $k\rightarrow \infty$.
\end{itemize}
\end{Assumption}

Under Assumption~\ref{assump:basic}, the solution of~\eqref{E:StochDiff} is a Markov process and we denote by $\{P_t\}_{t\geq 0}$ the associated Markov semigroup with transition kernel~$P_t(x,dy)$. Letting $\B$ be the Borel $\sigma$-field of subsets of~$\X$, the Markov semigroup acts on bounded, $\B$-measurable functions $\phi: \X\rightarrow \R$ as
\begin{align*}
  \left(P_t \phi\right)(x) = \E_x\left[\phi(x_t)\right] = \int_\X  \phi(y) P_t(x, dy),
\end{align*}
where the expectation~$\E_x$ is with respect to all realizations of the Brownian motion in~\eqref{E:StochDiff} with initial condition~$x_0 = x$. The Markov semigroup also acts on positive, finite $\B$-measures~$\nu$ as 
\begin{align*}
  \forall B \in \B, \qquad \left(\nu P_t\right)(B) = \int_\X \nu(dy) P_t(y, B). 
\end{align*}
Throughout this work, the infinitesimal generator of $\{P_t\}_{t\geq 0}$ is denoted by
\begin{align}
  \label{E:InfGen}
  L=p\cdot\nabla_q-\nabla U(q)\cdot\nabla_p-\gamma p\cdot\nabla_p+\gamma \Delta_p.
\end{align}

Recalling the Hamiltonian $H(q,p)$ of the system~\eqref{E:Ham}, the following $\B$-measure is a probability measure:
\begin{align}
  \label{eqn:invm}
  \mu(dq\,dp)=\frac{1}{\mathscr{N}}\rme^{-H(q,p)}\,dq\,dp,
  \qquad
  \mathscr{N}=\int_{\X}\rme^{-H(q,p)}\,dq\,dp < +\infty,
\end{align}
the finiteness of~$\mathscr{N}$ being guaranteed by Assumption~1(iii).  Moreover, it can be shown that $\mu$ is the unique \emph{invariant} probability measure under the dynamics \eqref{E:StochDiff}; that is, $\mu$ is the unique probability measure satisfying~$\mu P_t = \mu$. Indeed, a short calculation shows that 
\begin{align*}
L^\dagger \left(\rme^{-H}\right)=0,
\end{align*}
where $L^\dagger$ denotes the formal adjoint of $L$ with respect to the $L^2(dx)$ inner product.  Employing Assumption~\ref{assump:basic}, one can then establish~\cite[Proposition~2.5]{HerMat_19} by precisely the same argument given there.  This, in particular, gives the needed hypoelliptic and support properties to ensure uniqueness of $\mu$; see, for example,~\cite{kliemann1987recurrence, rey-bellet2006ergodic}.      

In addition to employing Assumption~\ref{assump:basic}, we will also make use of the following growth assumption on the Hessian of~$U$.  This assumption together with Assumption~\ref{assump:basic} allows for both polynomial and singular potentials~\cite{HerMat_19, BGH_19} and, by the arguments in~\cite{Villani_2009}, implies that $\mu$ satisfies a Poincar\'{e} inequality, as discussed around Proposition~\ref{prop:poincare} below.

\begin{Assumption}
  \label{assump:gc}
  For any $\varepsilon>0$, there exists a constant $C_\varepsilon \in \mathbb{R}_+$ such that
  \begin{equation}
    \label{eq:Ass2}
    \forall q \in \mathscr{O}, \quad \forall y\in \R^d, \qquad \left|\nabla^2 U(q) y\right|\leq \varepsilon|\nabla U(q)|^2|y|+C_\varepsilon|y|.
  \end{equation}
\end{Assumption}

\begin{Remark}
There is a subtle but important difference between the bound in Assumption~\ref{assump:gc} and an often assumed growth condition found in the literature on Langevin dynamics (see, for example, ~\cite{Villani_2009, dolbeault2015hypocoercivity, CLW_19}).  A typical replacement estimate for Assumption~\ref{assump:gc} often reads: There exist constants $C,D \in \mathbb{R}_+$ such that 
\begin{align}
  \label{eqn:strongerbound}
  \forall q \in \mathscr{O}, \qquad \left|\nabla^2 U(q)\right| \leq C |\nabla U(q)| + D.
\end{align}
Such conditions are usually stated in order to control the growth of the potential at infinity, having in mind that the increase in~$U$ should be polynomial in~$|q|$. This is why there is a power~$1$ for the gradient on the righthand side of~\eqref{eqn:strongerbound} while in Assumption~\ref{assump:gc} this exponent becomes $2$ at the expense of a small constant~$\varepsilon$. This difference in the exponent precisely affords the inclusion of potentials with singularities. As a simple illustrative example, consider 
\begin{align*}
\mathscr{O}= (0, \infty)\qquad \text{and} \qquad U(q)=q^{-1}+ q^2.  
\end{align*}
Note that this potential combines a strong repulsion at~$q=0$ with a quadratic confinement at infinity. Now, for $q\geq 1$, an estimate such as~\eqref{eqn:strongerbound} holds since
\[
\forall q \geq 1, \qquad \left|U''(q)\right| \leq 3 \leq 3\left|U'(q)\right| + 3.  
\]
However, near $q=0$, this estimate no longer holds.  Nevertheless, for any $\varepsilon \in (0,1)$, there exists $C_\varepsilon \in \mathbb{R}_+$ such that
\[
\forall q\in (0,1], \qquad \left|U''(q)\right|= 2\left(1 + \frac{1}{q^3}\right) \leq \varepsilon U'(q)^2 + C_\varepsilon. 
\]
Since $|U'(q)|\rightarrow \infty$ as $U(q)\rightarrow \infty$, we finally obtain~\eqref{eq:Ass2} by distinguishing the cases $q \geq 1$ and $q \leq 1$.  
\end{Remark}

\begin{Remark}
Assumption~\ref{assump:gc} is `almost minimal' in the sense that, if $\varepsilon >0$ cannot be chosen as small as wanted, there are smooth functions~$U$ on $\mathscr{O}$ satisfying the bound~\eqref{eq:Ass2} but such that the unnormalized measure $\widetilde{\mu}$ given by 
\begin{align*}
  \widetilde{\mu}(dq \, dp)= \rme^{-H(q,p)}\, dq \, dp
\end{align*}
is no longer finite, hence cannot be normalized into a probability measure. As a concrete example, consider $\mathscr{O}=(0, \infty)$ and any $C^\infty(\mathscr{O})$ function $U$ such that $U(q)= \alpha \log(q)$ for $q\geq 2$, with $\alpha > 0$. Then, 
\begin{align*}
  \forall q\geq 2, \qquad \left|U''(q)\right| = \frac{\alpha}{q^2} = \frac1\alpha |U'(q)|^2,
\end{align*}
Note that $\widetilde{\mu}$ is no longer finite for $\alpha \leq 1$ since $\rme^{-U(q)}=q^{-\alpha}$ for $q\geq 2$.       
\end{Remark}

\subsection{Statement of the main results}
In order to state the main results in this paper, we now introduce some further notation and terminology used throughout this work. 

\subsubsection*{Generators.}
We first introduce the $L^2(d\mu)$-adjoint $L^*$ of $L$. More precisely, for the $L^2(d\mu)$-inner product given by 
\begin{align*}
\langle\phi,\psi\rangle&=\int_\X\phi\,\psi\,d\mu,
\end{align*}
the action of the operator $L^*$ is defined as
\begin{align*}
\forall \psi, \phi \in C_\mathrm{c}^\infty(\X), \qquad \langle L^*\phi,\psi\rangle&=\langle \phi,L \psi\rangle, 
\end{align*} 
where $C_\mathrm{c}^\infty(\X)$ is the set of real-valued, $C^\infty(\X)$ functions with compact support in $\X$. It will sometimes be convenient to decompose $L$ into its symmetric and anti-symmetric parts with respect to the inner product on $L^2(d\mu)$ as
\begin{equation}
  \label{eq:def_L}
  L = L_{\mathrm{H}}+\gamma  L_{\mathrm{OU}},
\end{equation}
where 
\[
L_{\mathrm{H}} = p\cdot\nabla_q-\nabla U(q)\cdot\nabla_p,
\qquad
L_{\mathrm{OU}} = - p\cdot\nabla_p+\Delta_p .
\]
Observe that the Hamiltonian part~$L_{\mathrm{H}}$ of~$L$ is anti-symmetric, while the Ornstein--Uhlenbeck part~$L_{\mathrm{OU}}$ of~$L$ is symmetric, so that 
\[
L^* = -L_{\mathrm{H}}+\gamma  L_{\mathrm{OU}}.
\]
In fact, introducing the momentum reversal operator~$\mathcal{R}$ which is a unitary operator on~$L^2(d\mu)$ acting as $(\mathcal{R}\phi)(q,p) = \phi(q,-p)$, a simple computation shows that
\begin{equation}
  \label{eq:Lstar_RLR}
  L^* = \mathcal{R} L \mathcal{R}.
\end{equation}
It is also useful to introduce the generator of the \emph{overdamped Langevin} dynamics, namely 
\[
L_{\mathrm{OD}} = \Delta_q-\nabla U(q) \cdot\nabla_q = -\nabla_q^* \cdot \nabla_q.
\]
Note that this operator is the infinitesimal generator of the stochastic gradient system
\begin{align}
  \label{eqn:gradient}
  dq_t &= -\nabla U(q_t) \, dt + \sqrt{2} \, dW_t,
\end{align}
where $W_t$ is a standard $d$-dimensional Brownian motion.  

\subsubsection*{Lyapunov functions.}
In this paper, we construct a distance which is contractive (at large times) for the dynamics and equivalent to the norm on $L^2(W^*\,d\mu)$ where $W^*\geq 1$ is a conveniently chosen weight function. In fact, $W^*$ will be some Lyapunov function for~$L^*$, which is why we write~$W^*$ instead of~$W$. We therefore introduce the following terminology.  

\begin{Definition}
  \label{def:Lyap}
  $\quad$
  \begin{itemize}
  \item[(i)]  We call a function $V\in C^2(\X; (0, \infty))$ \emph{strongly integrable} if there exist constants $C >0$ and $\delta \in (0,1)$ such that the following estimate holds on $\X$:
    \begin{align*}
      V + |\nabla_p V| \leq C \rme^{(1-\delta ) H}.  
    \end{align*}  
  \item[(ii)]  We call a strongly integrable function $W\in C^2(\X;(0, \infty))$ a \emph{weak Lyapunov function with respect to $L$ with constants $\alpha, \beta >0$} if    \begin{align*}
    L W&\leq -\alpha W+\beta.
  \end{align*}
  \item[(iii)] We call a strongly integrable function $W^*\in C^2(\X;(0, \infty))$ a \emph{weak Lyapunov function with respect to $L^*$ with constants $\alpha, \beta >0$} if 
    \begin{align*}
      L^* W^*&\leq -\alpha W^*+\beta.
    \end{align*}
  \item[(iv)] We call a weak Lyapunov function $V\in C^2(\X; (0, \infty))$ with respect to $M\in \{ L, L^*\}$ a \emph{strong Lyapunov function with respect to} $M$ if the sub-level sets $\{V\leq c \}$ for $c \in (0,+\infty)$ are compact in~$\X$.  
  \end{itemize}  
\end{Definition}

\begin{Remark}
  \label{rem:wLF}
  Note that any positive constant function is a weak Lyapunov function with respect to both~$L$ and~$L^*$. However, constant functions are not strong Lyapunov functions since their sub-level sets are not compact.  Following the construction in~\cite{HerMat_19}, we show in Theorem~\ref{thm:main} that, under Assumptions~\ref{assump:basic} and~\ref{assump:gc}, there exists a strong Lyapunov function for~$L$.  Hence, by the structure of the generator~$L$, there exists also a strong Lyapunov function for~$L^*$.    
\end{Remark}
  
Throughout, for any operator $M\in \{ L, L^*\}$, we use the notation $\mathscr{W}_{\alpha, \beta}(M)$ to denote the set of weak Lyapunov functions with respect to $M$ with constants $\alpha, \beta>0$, and 
\begin{align*}
  \mathscr{W}(M) = \bigcup_{\alpha, \beta >0} \mathscr{W}_{\alpha, \beta}(M).
\end{align*}
We similarly let $\mathscr{S}_{\alpha, \beta}(M)$ denote the set of strong Lyapunov functions with respect to $M$ with constants $\alpha, \beta >0$ and
\begin{align*}
    \mathscr{S}(M) = \bigcup_{\alpha, \beta >0} \mathscr{S}_{\alpha, \beta}(M).
\end{align*}
The equality~\eqref{eq:Lstar_RLR} implies a clear relationship between $\mathscr{W}(L)$ and $\mathscr{W}(L^*)$, and hence between $\mathscr{S}(L)$ and $\mathscr{S}(L^*)$, as summarized in the following proposition.

\begin{Proposition}
  \label{prop:relLyap}
  The function $W$ is a weak (resp. strong) Lyapunov function for $L$ with constants $\alpha, \beta>0$ if and only if $W^*$ defined by $W^*(q,p)=W(q,-p)$ is a weak (resp. strong) Lyapunov function with respect to $L^*$ with constants $\alpha, \beta >0$.
\end{Proposition}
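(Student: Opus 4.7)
The plan is to reduce everything to the conjugation identity~\eqref{eq:Lstar_RLR}, namely $L^* = \mathcal{R} L \mathcal{R}$, together with the fact that $\mathcal{R}$ is an involution and a homeomorphism of $\X$. Since $W^* = \mathcal{R} W$ by definition, and $\mathcal{R}^2 = \mathrm{Id}$, a direct computation gives
\begin{equation*}
L^* W^* = (\mathcal{R} L \mathcal{R})(\mathcal{R} W) = \mathcal{R}(L W),
\end{equation*}
so that pointwise $(L^* W^*)(q,p) = (LW)(q,-p)$. From this, the weak Lyapunov inequality $LW \leq -\alpha W + \beta$ on $\X$ is equivalent (by evaluating at $(q,-p)$ and using $W^*(q,p) = W(q,-p)$) to $L^* W^* \leq -\alpha W^* + \beta$, with identical constants $\alpha,\beta$. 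This is the core of the argument.

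What remains is to verify that the auxiliary regularity, positivity, strong integrability and (in the strong case) compact sublevel set properties are each preserved under $\mathcal{R}$. Since $\mathcal{R}$ is a smooth involution, $W \in C^2(\X;(0,\infty))$ iff $W^* \in C^2(\X;(0,\infty))$. For strong integrability, observe that $H(q,p) = H(q,-p)$ so $\mathcal{R}$ leaves the weight $\rme^{(1-\delta)H}$ invariant, while $|\nabla_p W^*(q,p)| = |\nabla_p W(q,-p)|$ (the chain rule produces a sign change that disappears under $|\cdot|$); thus the bound $W + |\nabla_p W| \leq C\rme^{(1-\delta)H}$ transfers directly to $W^*$. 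For the strong case, the sublevel sets satisfy $\{W^* \leq c\} = \mathcal{R}\{W \leq c\}$, and $\mathcal{R}$ is a homeomorphism of $\X = \mathscr{O}\times\R^d$, so compactness is preserved.

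Putting these pieces together yields the equivalence in both the weak and strong case with the same constants, and the converse direction is obtained by the same argument applied to $W^*$ in place of $W$ (using $\mathcal{R}^2 = \mathrm{Id}$ once more). I do not anticipate any real obstacle here: once~\eqref{eq:Lstar_RLR} is in hand, the proposition is essentially a bookkeeping statement about the symmetry $p \mapsto -p$, and the only mildly delicate point is to check that all the qualitative hypotheses entering the definition of a (strong) Lyapunov function are manifestly $\mathcal{R}$-invariant, which they are.
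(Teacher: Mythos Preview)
Your proof is correct and follows exactly the approach the paper intends: the paper does not spell out a proof of this proposition, but simply notes that it is an immediate consequence of the conjugation identity~\eqref{eq:Lstar_RLR}, which is precisely what you use. Your additional verification that regularity, strong integrability, and compactness of sublevel sets are all $\mathcal{R}$-invariant is the routine bookkeeping the paper leaves implicit.
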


\begin{Remark}
  The existence of a Lyapunov function is often assumed in the literature to establish geometric ergodicity for SDEs in a (weighted) total variation distance; see, for example, \cite{meyn1993stability,hairer2011yet,rey-bellet2006ergodic}.  Using the one-to-one correspondence between $\mathscr{W}(L)$ and $\mathscr{W}(L^*)$ in our context, we will see here that contraction for the dynamics~\eqref{E:StochDiff} in a weighted $L^2( d\mu)$ sense also follows from the existence of a weak Lyapunov function.  
  \end{Remark}
\begin{Remark}
  \label{rmk:finer_topo}
 One can always take our weak Lyapunov function to be a positive constant.  However, for potentials satisfying Assumptions~\ref{assump:basic} and~\ref{assump:gc} (see~\cite{BGH_19,HerMat_19,LuMat_19}), the set $\mathscr{S}(L)$ is non-empty, hence so is $\mathscr{S}(L^*)$ by this one-to-one correspondence.  Consequently, the weight $W^* \in \mathscr{S}(L^*)$ in the weighted norm~$L^2(W^* d\mu)$ actually diverges at large values of the Hamiltonian~$H$, so the topology induced by~$L^2(W^* d\mu)$ is finer than the one induced by the usual (unweighted) $L^2(d\mu)$ norm.         
\end{Remark}

\subsubsection*{Poincar\'e inequalities.}
Because of the product structure of the invariant measure $\mu$, the $q$-marginal $\mu_{\mathrm{OD}}$ of $\mu$ given by 
\begin{align*}
    \mu_{\mathrm{OD}}(dq)= \frac{1}{\mathscr{N}_{\mathrm{OD}}} \rme^{-U(q)} \, dq, \qquad \mathscr{N}_{\mathrm{OD}} = \int_\mathscr{O} \rme^{-U(q)} \,dq, 
\end{align*}
satisfies a Poincar\'{e} inequality if and only if $\mu$ does, in view of tensorization results for Poincar\'e inequalities. We rely here on the fact that the other term in the product, the $p$-marginal of~$\mu$ denoted by~$\mu_{\mathrm{OU}}$, is a Gaussian measure with identity covariance, so that, for any $\phi\in H^1(d\mu_{\mathrm{OU}})$ with $\int_{\R^d} \phi \, d\mu_{\mathrm{OU}}=0$, it holds
\begin{equation}
  \label{eq:PoincareOU}
  \|\phi\|_{L^2(d\mu_{\mathrm{OU}})}^2\leq \|\nabla_p\phi\|_{L^2(d\mu_{\mathrm{OU}})}^2.
\end{equation}
Thus, in order to analyze constants later, we express the Poincar\'{e} constant for $\mu$ in terms of the Poincar\'{e} inequality for $\mu_{\mathrm{OD}}$. The following estimate is a consequence of the results stated in~\cite{BBCG_08} for instance.

\begin{Proposition}
  \label{prop:poincare}
  Suppose that $U$ satisfies Assumptions~\ref{assump:basic} and~\ref{assump:gc}. Then there exists a constant $\rho>0$ such that the following bound holds for all $\phi \in H^1(d\mu_{\mathrm{OD}})$ with $\int_\X \phi \, d\mu_{\mathrm{OD}} =0$:
  \begin{align*}
   \rho \int_\mathscr{O} \phi^2 \, d\mu_{\mathrm{OD}} \leq  \int_\mathscr{O} |\nabla \phi|^2 \, d\mu_{\mathrm{OD}}.
\end{align*} 
\end{Proposition}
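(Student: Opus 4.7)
The plan is to invoke the Lyapunov-function criterion for the Poincar\'e inequality developed in~\cite{BBCG_08}. Recall that $L_{\mathrm{OD}} = \Delta_q - \nabla U \cdot \nabla_q$ is symmetric in $L^2(d\mu_{\mathrm{OD}})$, and the criterion amounts to the following: if there exist a $C^2$ function $W \geq 1$ on~$\mathscr{O}$ and a compact set $K \subset \mathscr{O}$ such that $L_{\mathrm{OD}} W \leq -\lambda W + b \mathbf{1}_K$ for some $\lambda, b > 0$, and if $\mu_{\mathrm{OD}}$ satisfies a local Poincar\'e inequality on a Lipschitz open neighborhood of~$K$ in~$\mathscr{O}$, then $\mu_{\mathrm{OD}}$ satisfies the global Poincar\'e inequality.

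To construct such a $W$, I would take $W := \rme^{\delta U}$ for a small $\delta \in (0, 1/2)$, so that $W \in L^2(d\mu_{\mathrm{OD}})$ by Assumption~\ref{assump:basic}(iii). A direct computation gives
\[
L_{\mathrm{OD}} W = \left[\delta \Delta U - \delta(1-\delta)|\nabla U|^2\right] W.
\]
Applying~\eqref{eq:Ass2} to coordinate unit vectors and summing yields $|\Delta U(q)| \leq d(\varepsilon |\nabla U(q)|^2 + C_\varepsilon)$. Choosing $\varepsilon = (1-\delta)/(2d)$ absorbs the Hessian contribution into the gradient term, leaving
\[
L_{\mathrm{OD}} W \leq \left[-\frac{\delta(1-\delta)}{2}|\nabla U|^2 + \delta d C_\varepsilon\right] W.
\]
By Assumption~\ref{assump:basic}(iv), $|\nabla U(q)| \to \infty$ as $U(q) \to \infty$, so for any prescribed $\lambda > 0$ there exists $M > 0$ such that the bracket above is $\leq -\lambda$ whenever $U(q) \geq M$. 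Since the sub-level set $\{U \leq M\}$ has compact closure in $\R^d$ by Assumption~\ref{assump:basic}(ii) and is contained in $\mathscr{O}$, one may pick a compact Lipschitz neighborhood $K$ of $\{U \leq M\}$ compactly contained in $\mathscr{O}$, on which $L_{\mathrm{OD}} W$ is uniformly bounded. The Lyapunov inequality $L_{\mathrm{OD}} W \leq -\lambda W + b \mathbf{1}_K$ then holds for a suitable finite $b > 0$.

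The local Poincar\'e inequality on a neighborhood of $K$ in $\mathscr{O}$ follows from the smoothness of $U$ there: the density $\rme^{-U}$ is bounded above and below by positive constants on such a neighborhood, so the local statement reduces to the classical Poincar\'e inequality on a bounded Lipschitz domain. Applying the main result of~\cite{BBCG_08} then yields the global Poincar\'e inequality with some $\rho > 0$. The main obstacle in this programme is the interplay with the singular boundary $\partial \mathscr{O}$: one must verify that $\{U \leq M\}$ does not accumulate on $\partial \mathscr{O}$, so that it is genuinely compactly contained in $\mathscr{O}$. This is exactly where Assumption~\ref{assump:basic}(iv) --- which, in combination with the finiteness of the partition function in Assumption~\ref{assump:basic}(iii), forces $|\nabla U|$ and $U$ itself to diverge at the singularities --- plays the essential role.
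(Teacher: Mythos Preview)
Your approach is correct and coincides with what the paper intends: the paper does not give a proof but merely cites~\cite{BBCG_08}, and the Lyapunov function $W=\rme^{\delta U}$ you construct is precisely the one the paper itself uses later in the proof of Lemma~\ref{lem:diff}(iii). Two minor points: the claim $W\in L^2(d\mu_{\mathrm{OD}})$ is neither needed for the criterion of~\cite{BBCG_08} nor a consequence of Assumption~\ref{assump:basic}(iii) alone, and the compact containment of $\{U\leq M\}$ in~$\mathscr{O}$ comes from Assumption~\ref{assump:basic}(ii) rather than from~(iv).
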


When Assumptions~\ref{assump:basic} and~\ref{assump:gc} hold, $\mu$ therefore also satisfies a Poincar\'{e} inequality: for any $\varphi \in H^1(d\mu)$ with $\int_\X \varphi \, d\mu=0$,
\begin{equation}
  \label{eq:Poincare_mu}
  \min(1,\rho) \|\varphi\|_{L^2(d\mu)}^2 \leq \|\nabla_q\varphi\|_{L^2(d\mu)}^2 + \|\nabla_p\varphi\|_{L^2(d\mu)}^2.
\end{equation}

\subsubsection*{Modified norms.}
For convenience in the arguments in later sections of the paper, we make use of the following notation:
\begin{align}
  \label{eqn:inner}
  \|\phi\|_{T}^2&=\int_\X \big[T\phi\big]\,\phi\, d\mu,
\end{align}
where $T$ is a positive operator. When the argument~$T$ in the above is a nonnegative function, one should understand this as being a multiplication operator. We can further define an inner product~$\langle \cdot, \cdot \rangle_T$ associated to~\eqref{eqn:inner} by polarization. When no argument is indicated in the norm, $\|\phi\|$ denotes the usual norm on~$L^2(d\mu)$.

\subsubsection*{Exponential convergence in $L^2(d\mu)$.}
Our first result establishes exponential convergence to equilibrium in the unweighted setting of $L^2(d\mu)$ when the potential satisfies Assumptions~\ref{assump:basic} and~\ref{assump:gc}.  In particular, the result applies in the setting of a singular Lennard-Jones interaction as in~\cite{HerMat_19}. Note that the Lyapunov structure is not directly employed here.

\begin{Theorem}
  \label{thm:premain}
Suppose that the potential $U$ satisfies Assumptions~\ref{assump:basic} and~\ref{assump:gc}. Then,
\begin{itemize}
\item[(i)] for any $\gamma > 0$, there exist an explicit constant $\lambda >0$ such that, for any $\phi \in L^2(d\mu)$ with $\int_\X \phi \, d\mu =0$, 
\begin{align*}
  \forall t \geq 0, \qquad \|P_t\phi\|\leq 3\exp\left(-\lambda t\right)\|\phi\|.
\end{align*}
\item[(ii)] there exists $\overline{\lambda}$ such that the following lower bounds on the exponential decay rate holds:
\begin{align*}
\forall \gamma \in (0,+\infty), \qquad \lambda \geq \overline{\lambda} \min \{ \gamma, \gamma^{-1} \}. 
\end{align*}
\end{itemize}
\end{Theorem}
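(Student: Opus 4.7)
The plan is to adapt the direct $L^2$ hypocoercivity approach of Dolbeault--Mouhot--Schmeiser to the present singular setting. Let $\Pi$ denote the $L^2(d\mu)$-orthogonal projection onto $\ker L_{\mathrm{OU}}$, that is, the projection onto functions of $q$ alone,
\begin{equation*}
(\Pi\phi)(q) = \int_{\R^d} \phi(q,p)\,\mu_{\mathrm{OU}}(dp).
\end{equation*}
Since $L_{\mathrm{OU}}$ is symmetric and the Gaussian Poincar\'e inequality \eqref{eq:PoincareOU} holds, one has $-\langle L_{\mathrm{OU}}\psi,\psi\rangle \geq \|(1-\Pi)\psi\|^2$. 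Introduce the bounded auxiliary operator
\begin{equation*}
A = \bigl(1 + (L_{\mathrm{H}}\Pi)^*(L_{\mathrm{H}}\Pi)\bigr)^{-1} (L_{\mathrm{H}}\Pi)^*,
\end{equation*}
and, for a parameter $\varepsilon \in (0,1)$ to be chosen later, consider the modified functional
\begin{equation*}
\mathcal{E}_\varepsilon[\phi]^2 = \tfrac{1}{2}\|\phi\|^2 + \varepsilon\,\langle A\phi,\phi\rangle.
\end{equation*}
The standard DMS operator bounds $\|A\phi\| + \|L_{\mathrm{H}}\Pi A\phi\| \leq \|(1-\Pi)\phi\|$ show that $\mathcal{E}_\varepsilon$ is equivalent to $\tfrac12\|\cdot\|^2$ with constants arbitrarily close to $1$ when $\varepsilon$ is small; in particular one can guarantee $\tfrac14\|\phi\|^2 \leq \mathcal{E}_\varepsilon[\phi]^2 \leq \tfrac34\|\phi\|^2$, which will account for the prefactor $3$ in (i).

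Next I would differentiate $\mathcal{E}_\varepsilon[P_t\phi]^2$ in $t$. For $\phi$ mean-zero and sufficiently regular, using the decomposition \eqref{eq:def_L} one obtains
\begin{equation*}
-\frac{d}{dt}\mathcal{E}_\varepsilon[P_t\phi]^2 = \gamma\,\|\nabla_p P_t\phi\|^2 + \varepsilon \,\bigl\langle A L_{\mathrm{H}}\Pi P_t\phi,P_t\phi\bigr\rangle + \varepsilon\, R_\varepsilon(P_t\phi).
\end{equation*}
By construction of $A$, the principal term $\langle A L_{\mathrm{H}}\Pi\phi,\phi\rangle$ is coercive and controls $\|\nabla_q\Pi\phi\|^2$ up to a term controlled by the Poincar\'e inequality for $\mu_{\mathrm{OD}}$ from Proposition~\ref{prop:poincare}. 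The remainder $R_\varepsilon$ collects the cross terms involving $A L (1-\Pi)$, $\gamma A L_{\mathrm{OU}}$ and their adjoints; these I would bound by Cauchy--Schwarz and Young's inequality in terms of $\|\nabla_p\phi\|^2$ and $\|\nabla_q\Pi\phi\|^2$, where the necessary operator bounds on $A$, $\nabla_p A$ and $A L(1-\Pi)$ are precisely the content of the key Proposition~\ref{prop:key} alluded to in the introduction. Combined with the global Poincar\'e inequality \eqref{eq:Poincare_mu} used to convert $\|\nabla_p\phi\|^2 + \|\nabla_q\Pi\phi\|^2$ into control of $\|(1-\Pi)\phi\|^2 + \|\Pi\phi\|^2 = \|\phi\|^2$, this yields
\begin{equation*}
-\frac{d}{dt}\mathcal{E}_\varepsilon[P_t\phi]^2 \geq 2\lambda\,\mathcal{E}_\varepsilon[P_t\phi]^2
\end{equation*}
for an explicit rate $\lambda>0$. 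Gr\"onwall's lemma together with the norm equivalence then proves (i).

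For part (ii), I would carefully track the $\gamma$-dependence of each constant in the inequality above: the coefficient of $\|\nabla_p\phi\|^2$ equals $\gamma$ (from the symmetric part), the coefficient of $\|\nabla_q\Pi\phi\|^2$ equals $\varepsilon$ times a $\gamma$-independent constant, while the cross terms contribute at order $\varepsilon$ and $\varepsilon\gamma$. Optimizing $\varepsilon$ in terms of $\gamma$ so as to balance friction-induced dissipation in the $p$-direction against the $\varepsilon$-driven dissipation in the $q$-direction (taking $\varepsilon$ proportional to $\gamma$ for small $\gamma$ and to $\gamma^{-1}$ for large $\gamma$) yields $\lambda \geq \overline{\lambda}\min(\gamma,\gamma^{-1})$ for some $\overline\lambda$ independent of $\gamma$. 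The main obstacle I anticipate is the cross term $\langle A L(1-\Pi)\phi,\phi\rangle$: this is where the singular potential enters, and the classical DMS estimate would require the stronger bound $|\nabla^2 U|\leq C(1+|\nabla U|)$ from~\eqref{eqn:strongerbound}, whereas Assumption~\ref{assump:gc} only supplies the weaker $|\nabla^2 U|\leq \varepsilon|\nabla U|^2 + C_\varepsilon$. Reconciling these two facts is precisely what Proposition~\ref{prop:key} must do, via refined commutator estimates involving~$\nabla_q A$, $\nabla_p A$ and $L A$ that tolerate the quadratic-in-$\nabla U$ growth allowed by~\eqref{eq:Ass2}.
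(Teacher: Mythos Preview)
Your proposal is essentially the paper's own argument: the DMS modified functional $\|\cdot\|_{1+\delta A}^2$ (your $\mathcal{E}_\varepsilon$), the coercivity $\langle AL_{\mathrm{H}}\Pi\phi,\phi\rangle \leq -\frac{\rho}{\rho+1}\|\Pi\phi\|^2$, the control of the cross terms via Proposition~\ref{prop:key}, and the choice $\delta \sim \min(\gamma,\gamma^{-1})$ for part~(ii). One small correction: the technical ingredient behind Proposition~\ref{prop:key} handling the problematic term $\langle AL_{\mathrm{H}}(1-\Pi)\phi,\phi\rangle$ is not a commutator estimate but the second-order elliptic regularity bound $\|\nabla_q^2(1-L_{\mathrm{OD}})^{-1}\Pi\phi\| \lesssim \|\Pi\phi\|$ of Proposition~\ref{P:NablaSquared}, and it is precisely in that estimate that Assumption~\ref{assump:gc} substitutes for the stronger condition~\eqref{eqn:strongerbound}.
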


\subsubsection*{Exponential convergence in weighted $L^2$ spaces.}
Our next result shows that Assumptions~\ref{assump:basic} and~\ref{assump:gc} imply explicit exponential convergence to equilibrium in a weighted topology constructed from the appropriate Lyapunov functional. The existence of a weak Lyapunov function follows by Remark~\ref{rem:wLF}, but, as remarked earlier, we also have the existence of an explicit strong Lyapunov function $W^*$ under Assumptions~\ref{assump:basic} and~\ref{assump:gc}. In fact, we construct explicit Lyapunov functions, which parametrically depend on the friction~$\gamma>0$, in order to obtain the correct scaling for the convergence rate.

\begin{Theorem}
  \label{thm:main}
  Suppose that the potential $U$ satisfies Assumptions~\ref{assump:basic} and~\ref{assump:gc}.
  \begin{itemize}
  \item[(i)] Consider $W^*\in \mathscr{W}_{\alpha, \beta}(L^*)$ for some constants $\alpha, \beta>0$.  Let $\lambda>0$ be as in the conclusion of Theorem~\ref{thm:premain} and set $m=5\eta\lambda/\beta$ for some~$\eta \in (0,1)$. 
    Then, for any $\phi \in L^2(W^* d\mu)$ with $\int_\X \phi \, d\mu =0$,
    \begin{equation}
      \label{eq:L2weighted}
      \forall t\geq 0, \qquad \|P_t\phi\|_{m W^*+1}\leq 3 \exp\left(-\min\left\{\lambda(1-\eta), \frac\alpha2 \right\} t\right)\|\phi\|_{m W^*+1}.
    \end{equation}
  \item[(ii)] For any $\gamma >0$ and $\eta \in (0,1)$, there exist $\alpha, \beta>0$ and $W^*\in \mathscr{S}_{\alpha, \beta}(L^*)$ of the form
    \[
    W^*= \exp(\eta (H+ \psi_\gamma)),
    \]
    where $\psi_\gamma = \mathrm{o}(H)$ as $H\rightarrow \infty$. Moreover, there exist $c > 0$ and $C,D \in \mathbb{R}_+$ (which all depend on~$\eta$ but not on~$\gamma$) such that 
    \begin{align}
      \forall \gamma \leq 1, & \qquad \alpha \geq c \gamma, \qquad \beta \leq C \gamma, \label{eq:scaling_Thm2_gamma_leq_1} \\
      \forall \gamma \geq 1, & \qquad \alpha \geq c \gamma, \qquad \beta \leq C \gamma^3 \mathscr{M}_\gamma, \qquad \mathscr{M}_\gamma = \max_{|p| \leq D, |\nabla U| \leq D \gamma} W^*. \label{eq:scaling_Thm2_gamma_geq_1}
    \end{align}
  \end{itemize}
\end{Theorem}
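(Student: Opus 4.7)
My plan is to treat the two parts separately, using Theorem~\ref{thm:premain} as a black box for Part~(i), and the Lyapunov constructions from~\cite{CHMMS_17,HerMat_19,BGH_19} as a template for Part~(ii).

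For Part~(i), set $V=mW^*+1\geq 1$ and $u_t=P_t\phi$. Differentiating $\|u_t\|_V^2$ in time, using $\partial_t u_t=Lu_t$ together with the pointwise identity $uLu=\tfrac12 L(u^2)-\gamma|\nabla_p u|^2$ (a consequence of writing $L$ as a first-order part plus $\gamma$ times $L_{\mathrm{OU}}$), I arrive at the energy identity
\begin{equation*}
\frac{d}{dt}\|u_t\|_V^2 = \int_\X u_t^2 (L^*V)\, d\mu - 2\gamma\int_\X V|\nabla_p u_t|^2\, d\mu.
\end{equation*}
The Lyapunov hypothesis on $W^*$ gives $L^*V=mL^*W^*\leq -\alpha V+(\alpha+m\beta)$, and with the specific choice $m\beta=5\eta\lambda$, dropping the non-positive dissipation term yields
\begin{equation*}
\frac{d}{dt}\|u_t\|_V^2 \leq -\alpha \|u_t\|_V^2 + (\alpha+5\eta\lambda)\|u_t\|^2.
\end{equation*}
Theorem~\ref{thm:premain} controls $\|u_t\|^2\leq 9\exp(-2\lambda t)\|\phi\|^2\leq 9\exp(-2\lambda t)\|\phi\|_V^2$ (using $V\geq 1$). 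An integrating-factor computation then yields $\|u_t\|_V^2\leq e^{-\alpha t}\|\phi\|_V^2+9(\alpha+5\eta\lambda)\|\phi\|_V^2\int_0^t e^{-\alpha(t-s)-2\lambda s}\,ds$, which after careful tracking of constants --- using in addition the trivial $L^2$-contractivity $\|u_t\|\leq\|\phi\|$ at small times --- produces the advertised rate $\min(\alpha/2,\lambda(1-\eta))$ with prefactor~$3$. The factor~$5$ in the definition of $m$ is calibrated precisely for this purpose.

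For Part~(ii), I propose $W^*=\exp(\eta(H+\psi_\gamma))$ with a bounded, $\gamma$-dependent perturbation $\psi_\gamma$, so that the condition $\psi_\gamma=\mathrm{o}(H)$ holds trivially, strong integrability follows from $\eta<1$, and compactness of sub-level sets follows from Assumption~\ref{assump:basic}(ii). Using $L^*H=-\gamma|p|^2+\gamma d$, a direct computation gives
\begin{equation*}
\frac{L^*W^*}{W^*} = -\eta\gamma(1-\eta)|p|^2+\eta\gamma d+\eta L^*\psi_\gamma+2\gamma\eta^2 p\cdot\nabla_p\psi_\gamma+\gamma\eta^2|\nabla_p\psi_\gamma|^2.
\end{equation*}
The first term provides confinement in momentum; confinement in position must come from $\psi_\gamma$. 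Motivated by \cite{HerMat_19,BGH_19}, I would try $\psi_\gamma(q,p)=-c_\gamma\,p\cdot\nabla U(q)/(1+|\nabla U(q)|^2)$ with $c_\gamma$ tuned to $\gamma$. The key contribution from $L^*\psi_\gamma$ is $-c_\gamma|\nabla U|^2/(1+|\nabla U|^2)$, which tends to the strictly negative constant $-c_\gamma$ as $|\nabla U|\to\infty$, supplying the needed coercivity near the singularities of $U$. Cross-terms involving $\nabla^2 U$ are controlled via Assumption~\ref{assump:gc} with $\varepsilon$ taken small enough that the resulting spurious $|p|^2$-contributions are absorbed by $-\eta\gamma(1-\eta)|p|^2$. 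For the scalings~\eqref{eq:scaling_Thm2_gamma_leq_1}, I would take $c_\gamma$ proportional to $\gamma$ when $\gamma\leq 1$, so that all confining coefficients in $L^*W^*/W^*$ scale as $\gamma$, giving $\alpha\geq c\gamma$ and $\beta\leq C\gamma$. When $\gamma\geq 1$, the Lyapunov inequality need only hold outside a compact region of the form $\{|p|\leq D,\,|\nabla U|\leq D\gamma\}$, outside of which $-\eta\gamma(1-\eta)|p|^2$ dominates; inside this region the crude bound on $|L^*W^*|$ is of order $\gamma^3\mathscr{M}_\gamma$, producing~\eqref{eq:scaling_Thm2_gamma_geq_1}.

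The hardest step will be the constant tracking in both parts. In Part~(i), achieving the exact prefactor~$3$ requires combining the trivial $L^2$-contractivity $\|u_t\|\leq\|\phi\|$ at small times with the exponential decay at large times in a delicate balance. In Part~(ii), the calibration of $c_\gamma$ against $\varepsilon$ in Assumption~\ref{assump:gc}, and in particular the emergence of the multiplicative factor $\mathscr{M}_\gamma$ in $\beta$ when $\gamma\geq 1$, are the most technically demanding steps; the latter reflects the intrinsic ``break-down'' of the Lyapunov approach in the noise-dominated regime $\gamma\gg 1$ mentioned in the introduction.
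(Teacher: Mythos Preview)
Your treatment of Part~(ii) is essentially the paper's construction: the paper also takes $\psi_\gamma = \kappa\, p\cdot\nabla U/(|\nabla U|^2+\sigma)$ with $\kappa$ proportional to~$\gamma$ (specifically $\kappa=2\gamma d$), controls the $\nabla^2U$ cross-terms via Assumption~\ref{assump:gc}, and obtains the Lyapunov inequality outside $\{|p|\leq D\}\cap\{|\nabla U|\leq D\gamma\}$. The only cosmetic difference is that the paper works with~$L$ and then invokes Proposition~\ref{prop:relLyap} to flip the sign of~$p$, whereas you compute with~$L^*$ directly; your minus sign in~$\psi_\gamma$ is consistent with that choice. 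One small point: the paper takes $\sigma>C_\varepsilon/\varepsilon$ rather than~$\sigma=1$, so that the $|p|^2$-coefficient is monotone in~$|\nabla U|$; you should check your choice $\sigma=1$ does not spoil this.

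Your Part~(i), however, takes a genuinely different route from the paper, and the route does not deliver the stated prefactor~$3$. The paper does \emph{not} treat Theorem~\ref{thm:premain} as a black box: it reopens the proof and uses the differential inequality (from~\eqref{eqn:preGronwall})
\[
\|P_t\phi\|_{1+\delta A}^2 - \|P_s\phi\|_{1+\delta A}^2 \leq -5\lambda\int_s^t\|P_u\phi\|^2\,du,
\]
combines it additively with Proposition~\ref{prop:key}(iii) for~$mW^*$, and obtains a single clean Gronwall inequality in the norm $\|\cdot\|_{mW^*+1+\delta A}$. The factor~$5$ in $m=5\eta\lambda/\beta$ is calibrated to \emph{that} inequality (it converts $5\lambda\|P_u\phi\|^2$ into $2\lambda\|P_u\phi\|_{1+\delta A}^2$ via the norm equivalence), and the prefactor~$3$ then drops out of the equivalence between $\|\cdot\|_{mW^*+1}$ and $\|\cdot\|_{mW^*+1+\delta A}$. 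In your approach the factor~$5$ plays no distinguished role: your integrating-factor bound gives
\[
\|u_t\|_V^2 \leq \Big[1 + 9(\alpha+5\eta\lambda)\int_0^t e^{\alpha s - 2\lambda s}\,ds\Big] e^{-\alpha t}\|\phi\|_V^2,
\]
and when e.g.\ $\alpha\leq 2\lambda(1-\eta)$ the bracket is at least $1+9(2+3\eta)/(2\eta)$, which blows up as $\eta\to 0$ and in any case far exceeds~$9$. The ``trivial $L^2$-contractivity at small times'' that you invoke does not repair this, since it controls only the unweighted norm. Your approach does yield exponential decay in $\|\cdot\|_{mW^*+1}$ at the correct rate, but with a prefactor depending on~$\eta$ and on~$\alpha/\lambda$; to recover the exact statement you must go back inside the proof of Theorem~\ref{thm:premain} and use the $(1+\delta A)$-modified norm jointly with the Lyapunov weight, as the paper does.
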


Note that the decay rate in~\eqref{eq:L2weighted} is slightly smaller than the one in~\eqref{thm:premain}. Somehow, the larger the Lyapunov function is (\emph{i.e.} the larger~$m$ is), the smaller the decay rate is. Note also that if $W^*\in \mathscr{W}(L^*)\setminus \mathscr{S}(L^*)$, then Theorem~\ref{thm:main} does not provide additional information when compared with Theorem~\ref{thm:premain}. Essentially, one can think of $W^*\in \mathscr{W}(L^*)\setminus \mathscr{S}(L^*)$ as being a positive constant, in which case the norms $\| \cdot \|_{mW^*+1}$ and $\| \cdot \|$ are equivalent. However, if $W^*\in \mathscr{S}(L^*)$, then $W^*$ has compact sub-level sets $\{ W^* \leq c\}$ for all $c>0$.  This means that $W^*\rightarrow \infty$ as $H\rightarrow \infty$, in which case the norm $\| \cdot \|_{mW^* +1}$ dominates the norm~$\|\cdot \|$ (see also Remark~\ref{rmk:finer_topo}).    

Before proceeding further, let us discuss more precisely the convergence result provided by Theorem~\ref{thm:main} through some remarks.  

\begin{Remark}
Item (ii) of Theorem~\ref{thm:main} implies that the effective exponential decay rate~$\min\{\lambda(1-\varepsilon), \alpha/2\}$ in~\eqref{eq:L2weighted} is still of order~$\min\{\gamma, \gamma^{-1}\}$, as in Theorem~\ref{thm:premain}.  When $\gamma >0$ is small, we also note that $m$ is of order~$1$, so that the norm~$\|\cdot\|_{m W^*+1}$ is genuinely stronger than the standard norm on~$L^2(d\mu)$.  On the other hand, when $\gamma > 0$ is large, the factor $m$ is small in~$\gamma$, of order $\gamma^{-4}\mathscr{M}_\gamma^{-1}$. To estimate $\mathscr{M}_\gamma$, one needs more knowledge than Assumptions~\ref{assump:basic} and~\ref{assump:gc} in order to compare $|\nabla U|$ and~$U$. One should however typically think of $\mathscr{M}_\gamma$ as being exponentially large in some power of~$\gamma$ as $\gamma\rightarrow \infty$. The poor scaling of this constant with respect to~$\gamma$ is not surprising given that the construction of the Lyapunov function in the previous result is based on the analysis of the dynamics at large energies, outside the region $\{|p| \leq D\} \cap \{ |\nabla U| \leq D\gamma \}$.  Note that this region grows as $\gamma\rightarrow \infty$ so that the analysis works in a shrinking part of the phase space.           
\end{Remark}

\begin{Remark}
Following the calculations in~\cite{BGH_19}, it is possible to use the methods of this paper to marginally improve the dimensionality dependence in~$\lambda$ of Theorem~\ref{thm:premain}. In particular, it is possible to replace Assumption~\ref{assump:gc} with the condition: There exists $\varepsilon >0$ small enough but independent of the dimension such that condition (7) is satisfied. Indeed, note from the proof of Proposition~2.40 in the appendix of~\cite{BGH_19} that it is possible to choose~$\varepsilon$ in Assumption~\ref{assump:gc} independent of the dimension for Lennard--Jones-like potentials. This can be done by carefully choosing the dimension dependence of the constants~$C_1,C_2$ in the latter work as functions of $d$. However, the resulting constant~$C_\varepsilon$ in~\eqref{assump:gc} will diverge as~$d^a$ (the exponent $a$ being determined by the choices of~$C_1,C_2$). It is possible to remedy to this issue by rescaling variables as $q\mapsto d^{-a/2}q$, so that $|d^{a} \nabla^2 U(q)y|\leq |d^{a/2} \nabla U(q)|^2|y|+C_\epsilon$, which leads to $|\nabla^2 U(q)y|\leq |\nabla U(q)||y|+d^{-a} C_\varepsilon$. Hence, under this spatial rescaling, both $\varepsilon$ and $C_\varepsilon$ can be chosen independently of the dimension.
  
  Although the scaling of~$\varepsilon$ and~$C_\varepsilon$ in Assumption~\ref{assump:gc} can be made precise and controlled, the dependence of the Poincar\'e constant~$\rho$ with respect to the dimension is less clear. Dimension-free Poincar\'e constants are obtained only in unrealistic situations, for instance particles not interacting which each other (in which case the potentials is separable, namely~$U(q)=\sum_{i=1}^d u(q_i)$, and the Poincar\'e constant is simply the minimum of the Poincar\'e constants associated with the potentials~$u_i$), or uniformly convex potentials and perturbations thereof. This makes it difficult to determine the dimension dependence of the convergence rate~$\lambda$, which is based on the constants in~\eqref{eq:Lambda_pm}.
  \end{Remark}

\begin{Remark}
  \label{rmk:avg_high_energy}
Note that $\alpha$ and $\lambda$ have different scalings with respect to~$\gamma>0$ in part~(ii) when $\gamma >0$ is large.  In particular, $\alpha =\mathcal{O}(\gamma)$ while $\lambda = \mathcal{O}(\gamma^{-1})$ as $\gamma \rightarrow \infty$.  To see heuristically why one should expect this discrepancy, note that in the region \begin{align*}
\{ |p| \geq D \} \cup \{|\nabla U| \geq D \gamma \}
\end{align*}
for $D>0$ large, but independent of $\gamma$, the effects of the noise in the system~\eqref{E:StochDiff} can heuristically be considered to be negligible.  Furthermore, suppose that at high energy levels the system~\eqref{E:StochDiff} moves approximately to leading order in time according to the deterministic Hamiltonian dynamics
\begin{equation}
  \label{eqn:detHam}
  \left\{ \begin{aligned}
    \dot{Q}&= P, \\
    \dot{P} &=- \nabla U(Q).  
  \end{aligned} \right.
\end{equation}
This is heuristically the case if, at higher and higher energy levels, the periods of the `orbits' of~\eqref{eqn:detHam} (should such orbits even exist) are short compared to $\gamma^{-1}$.  
Introduce 
\begin{align*}
  \left\langle P^2 \right\rangle_h = \frac{1}{T_h} \int_0^{T_h} |P_s|^2 \, ds,
\end{align*}
where $T_h$ denotes the time spent during one complete Hamiltonian orbit for the dynamics~\eqref{eqn:detHam} on $\{(Q,P)\,: \, H(Q,P) = h\}$.  If we believe for large $h$ that 
\begin{align*}
  \left\langle P^2 \right\rangle_h \approx c h
\end{align*}
for some $c>0$, then we obtain for $H(q,p)=h$ large that
\begin{align*}
\frac{1}{T_h}\E_{(q,p)}\left[ H(q_{T_h}, p_{T_h})\right] &= \frac{1}{T_h}H(q,p) - \frac{\gamma}{T_{h}}\E_{(q,p)}\left[\int_0^{T_h} |p_s|^2  \, ds\right] + \gamma d \\
&\approx \frac{1}{T_h}H(q,p) - \frac{c\gamma}{T_h}\int_0^{T_h} H(q_s, p_s) \, ds + \gamma d. 
\end{align*}
Since intuitively the time $T_h$ is small for $H=h$ large, we expect some Lyapunov-like condition on~$H$, which suggests that $\alpha=\gamma c$ for some $c>0$ independent of $\gamma$ when~$\gamma$ is large.    
\end{Remark}

\begin{Remark}
  We motivate here the scaling $\min\{ \alpha, \lambda\}$ for the convergence rate in~\eqref{eq:L2weighted}. Consider $\eta>0$ and suppose that $W^*\in \mathscr{S}_{(1+\eta) \alpha, \beta}(L^*)$ for some $\alpha, \beta >0$ with $W^* \geq 1$. Then there exists $K^*\subseteq \X$ compact such that  
\begin{align*}
  \forall (q,p) \in (K^*)^c, \qquad \left(L^* W^*\right)(q,p) \leq -\alpha W^*(q,p). 
\end{align*}
In view of~\eqref{eq:Lstar_RLR}, the function~$W$ defined by $W(q,p):=(\mathcal{R}W^*)(q,p) = W^*(q,-p)$ satisfies an analogous estimate for $L$ on the set $K= \{ (q,p) \,: \, (q,-p) \in K^*\}$, namely
\begin{align*}
    \forall (q,p) \in K^c, \qquad (L W)(q,p) \leq - \alpha W(q,p).
\end{align*}
Let $\tau=\inf\{ t\geq 0 \, : \, (q_t, p_t) \in K\}$ be the first time the process $(q_t, p_t)$ solving~\eqref{E:StochDiff} enters~$K$. Since $W^*\geq 1$ and hence $W\geq 1$ as well, the following equality follows from It\^{o}'s formula applied to $\psi(t, q_t,p_t)=\rme^{\alpha t} W(q_t, p_t)$ started in $K^c$ and stopped at time $\tau$:   
\begin{align}
  \label{eqn:laplacet}
  \forall (q,p) \in K^c, \qquad \E_{(q,p)}\left[\exp(\alpha \tau)\right] \leq W(q,p) < \infty.  
\end{align}
One way to interpret the above exponential moment estimate on~$\tau$ is that the speed at which the process $(q_t, p_t)$ returns to $K$ is governed by the parameter $\alpha$. Outside of this compact set, the dynamics returns to $K$ exponentially fast on average as dictated by the value $\alpha>0$ for which the Laplace transform of $\tau$ in~\eqref{eqn:laplacet} is finite. Once the process enters~$K$, mixing occurs according to the local topology given by the norm~$\| \cdot \|$. This suggests in particular the scaling~$\min\{ \alpha, \lambda\}$ in the exponential convergence rate of Theorem~\ref{thm:main}. 
\end{Remark}

\begin{Remark}
  Some convergence results for Fokker--Planck operators associated with Langevin dynamics can be extended to other types of generators, in particular generators associated with piecewise deterministic Markov processes~\cite{dolbeault2015hypocoercivity,andrieu2018hypocoercivity,lu2020explicit}. The generator of the linear Boltzmann dynamics corresponds to replacing the differential opertator~$L_\mathrm{OU}$ in~\eqref{eq:def_L} by the integral operator~$\Pi-1$. An inspection of the proof of Theorem~\ref{thm:premain} shows that this result will still hold, at least formally. Some estimates are unchanged, such as most of the bounds in Sections~\ref{sec:estimates_tech} and~\ref{sec:elliptic_reg}, which involve either functions of the~$q$ variable only or functions in the image of the generator of the Hamiltonian part of the dynamics. Some work would however be required in the proof of Proposition~\ref{prop:key} in Section~\ref{sec:proof_roposition_prop:key} to make all estimates rigorous since hypoellipticity is lost, and some manipulations based on truncations and carr\'e-du-champ formulas would have to be adapted. In contrast, it is not clear whether the results of Theorem~\ref{thm:main} hold. The issue there is to find a Lyapunov function. In the framework of Section~\ref{sub:scen1}, this is done in~\cite[Lemma~3.2]{BRSS17} for a Lyapunov function quadratic in~$p$ (although the scaling with respect to the equivalent of the parameter~$\gamma$ would have to be made explicit). It is not obvious how to extend the approach to Lyapunov functions such as~\eqref{eq:Lyap_proof_Thm2} since our algebraic manipulations in Section~\ref{sub:general} rely on the fact that we work with second order differential operators.
\end{Remark}

\section{Proof of the main general results}
\label{sec:genresults}
We prove in this section the main general results of this paper, namely Theorem~\ref{thm:premain} part~(i) and Theorem~\ref{thm:main} part~(i). These results are established by assuming some technical estimates, whose proofs are postponed to Section~\ref{sec:auxest}. The analysis of the claimed scalings with respect to the friction parameter~$\gamma$, as outlined in the statements of Theorem~\ref{thm:premain} part~(ii) and Theorem~\ref{thm:main} part~(ii), are studied in a second stage, in Section~\ref{sec:scalings}. 

We first motivate and discuss the main ideas behind the proofs in Sections~\ref{sub:theideai} and~\ref{sub:theideaii}. In these two motivating subsections, we make a number of formal manipulations in order to simplify the presentation, without justification, using generic `nice' functions $\phi\in L^2(d\mu)$ with mean zero with respect to $\mu$. The needed manipulations, as well as what is meant by a `nice' $\phi \in L^2(d\mu)$ with mean zero, are made precise in the proofs of the main results in Section~\ref{sec:layout}.      

\subsection{Idea of proof in the unweighted setting (Theorem~\ref{thm:premain})}
\label{sub:theideai}

The first observation we make is that working in the topology $L^2(d\mu)$ is a good start in order to see some elements of a contraction. Indeed, recalling that 
\begin{equation}
  \label{eq:phi_L_phi}
  2\phi L \phi= L(\phi^2) -2\gamma |\nabla_p \phi|^2, 
\end{equation}
we have, by invariance of $\mu$, that
\begin{align}
  \label{eqn:firstcomp}
  \frac{d}{dt}\|P_t\phi\|^2&=2\langle LP_t\phi,P_t\phi\rangle=\langle L(P_t\phi)^2,1\rangle-2\gamma \|\nabla_p P_t\phi\|^2=-2\gamma\|\nabla_p P_t\phi\|^2.
\end{align}
The Poincar\'{e} inequality~\eqref{eq:Poincare_mu} satisfied  by the measure $\mu$ cannot be used at this stage since only $p$-derivatives appear in~\eqref{eqn:firstcomp} through $-2\gamma\|\nabla_p P_t \phi\|$ above. We need to uncover the `missing' $q$-derivatives, namely $-\|\nabla_q P_t\phi\|^2$. This is done by adding a small perturbation to the norm in $L^2(d\mu)$ in order to couple the~$q$ and~$p$ degrees of freedom, and spread the dissipation from~$p$ to~$q$.

This perturbation is encoded in practice by some operator~$A$, following the approach of~\cite{Herau06,dolbeault2015hypocoercivity}. We introduce 
\eqnn{
  \label{eqn:Adef}
  A &= -\big(1+(L_{\mathrm{H}}\Pi)^* (L_{\mathrm{H}}\Pi)\big)^{-1}(L_{\mathrm{H}}\Pi)^* = \left(1-\Pi L_{\mathrm{H}}^2\Pi\right)^{-1}\Pi L_{\mathrm{H}} = \left(1- L_{\mathrm{OD}}\Pi\right)^{-1}\Pi L_\mathrm{H},
}
where $\Pi$ is the projection operator on $L^2(d\mu)$ given by
\eqnn{
  \label{eqn:pidef}
  (\Pi\phi)(q) = \frac{1}{(2\pi )^{d/2}}\int_{\R^d} \phi(q,p) \rme^{-\frac{|p|^2}{2}} \, dp.
}
Effectively, the perturbation $A$ plays a role similar to the twisted gradient as in~\cite{Talay_02, MN06, Villani_2009}, but it has been renormalized to be an operator on $L^2(d\mu)$ as opposed to $H^1(d\mu)$.  Thus, when we uncover the missing $q$-derivatives, they appear in renormalized form.

\begin{Remark}
Strictly speaking, the operator $A$ is defined on a dense domain in $L^2(d\mu)$. Once we prove that it is bounded operator in $L^2(d\mu)$ when restricted to this dense domain, we can extend it to an operator on all of~$L^2(d\mu)$ with the same norm; see Section~\ref{sec:auxest}, in particular Lemma~\ref{lem:standard}.
\end{Remark}

Recalling the notation~\eqref{eqn:inner}, we next consider the modified `norm' $\| \cdot \|_{1+\delta A}$ with $\delta >0$ a parameter to be determined -- chosen in particular so that $\| \cdot \|_{1+\delta A}$ is indeed a norm, equivalent to the standard norm~$\|\cdot\|$. Then, using~\eqref{eqn:firstcomp}, 
\begin{align}
\nonumber \frac{d}{dt}\|P_t\phi\|_{1+\delta A}^2&=\frac{d}{dt} \|P_t \phi\|^2 + \delta \frac{d}{dt}\langle AP_t \phi, P_t \phi \rangle \\ 
\nonumber &= -2\gamma \| \nabla_p P_t \phi \|^2 + \delta\langle A L P_t \phi, P_t \phi\rangle + \delta\langle A P_t \phi, L P_t \phi\rangle\\
\nonumber &= -2\gamma \| \nabla_p P_t \phi \|^2 +\delta\langle A L_{\mathrm{H}} \Pi P_t \phi, P_t \phi \rangle + \delta\langle AL_{\mathrm{H}}(1-\Pi) P_t \phi, P_t \phi\rangle\\
\nonumber &\qquad   + \delta\gamma \langle A L_{\mathrm{OU}} P_t \phi, P_t \phi \rangle + \delta\langle L^* A P_t \phi, P_t \phi \rangle\\
\label{eqn:Tis}&=: -2\gamma \| \nabla_p P_t \phi \|^2 + \delta \left[ T_1(P_t \phi) + T_2(P_t \phi) + T_3(P_t \phi) + T_4(P_t \phi)\right]. 
\end{align}
We now highlight that the term $T_1(P_t \phi)$ provides the missing dissipation in~$q$, while we will see later on, in Section~\ref{sec:layout}, that $T_i(P_t \phi)$ for $i=2,3,4$ are `lower-order' order terms in a sense to be made precise. 

Note first that, for $\phi \in L^2(d\mu)$ with $\int_\X \phi \, d\mu = 0$, one has $\int_\X P_t \phi \, d\mu = \int_{\mathscr{O}} \Pi P_t \phi \, d\mu_{\rm OD} = 0$. We can therefore use the Poincar\'{e} inequality for the overdamped measure $\mu_{\mathrm{OD}}$ (see Proposition~\ref{prop:poincare}) to obtain
\begin{align}
T_1(P_t \phi)= \langle A L_{\mathrm{H}} \Pi P_t \phi, P_t \phi \rangle &= \langle (1- L_{\mathrm{OD}} \Pi)^{-1} L_{\mathrm{OD}} \Pi P_t \phi, \Pi P_t \phi \rangle \nonumber \\
&= - \left\langle (1+ \nabla_q^* \cdot\nabla_q \Pi)^{-1} \nabla_q^*    \cdot \nabla_q   \Pi P_t \phi, \Pi P_t \phi \right\rangle \nonumber \\
&\leq - \frac{\rho}{\rho + 1}\| \Pi P_t \phi\|^2 . \label{eq:coercivity_q}
\end{align}  
Thus, because the operator $\Pi$ is playing the role of a renormalized gradient in $q$, the term on the last line above can be combined with $-2\gamma \| \nabla_p P_t \phi\|^2 \leq -2\gamma \|(1-\Pi)P_t \phi\|^2$, where the last inequality follows from the Poincar\'{e} inequality~\eqref{eq:PoincareOU} in the $p$-marginal $\mu_{\mathrm{OU}}$. This leads to the sought after global dissipation and allows to apply a Gronwall lemma.

For the approach to be effective, we need to carefully choose $\delta>0$ so that (a) $\|\cdot\|_{1+\delta A}$ is actually a norm, equivalent to the standard one on~$L^2(d\mu)$, (b) the remainder terms $T_i(P_t \phi)$ for $i=2,3,4$ can be controlled, and (c) the decay rate has the correct scaling in~$\gamma$ as claimed in Theorem~\ref{thm:premain}(ii). The originality of our results compared to related results for Langevin dynamics~\cite{DKMS13,GS16,IOS19} is that we consider weaker conditions on~$U$ than in previous works, allowing in particular the possibility of potentials with singularities.

\subsection{Idea of the proof for a weighted norm (Theorem~\ref{thm:main})}
\label{sub:theideaii}

Building off the heuristics in the previous section, we now outline some of the differences from the above when we switch to the weighted topology $L^2(W^* d\mu)$ where $W^*\in \mathscr{W}_{\alpha, \beta}(L^*)$ for some $\alpha, \beta>0$.  In fact, if one makes good choices in the construction of the norm and uses the mathematical setup of Section~\ref{sec:setting}, then the result in the weighted setting, in particular Theorem~\ref{thm:main}, readily follows from the previous analysis.  Again, we proceed here formally using `nice' functions with mean zero.    

Recalling the notation~\eqref{eqn:inner}, as well as~\eqref{eq:phi_L_phi}, we obtain for $W^*\in \mathscr{W}_{\alpha, \beta}(L^*)$ that 
\begin{align*}
\frac{d}{dt}\|P_t\phi\|_{W^*}^2&=2\langle LP_t\phi,P_t\phi\rangle_{W^*}=\langle (P_t\phi)^2,L^*W^*\rangle-2\gamma \|\nabla_p P_t\phi\|^2_{W^*}.
\end{align*}
Using the fact that $L^* W^*\leq -\alpha W^* + \beta $, we thus arrive at the inequality 
\begin{align}
\label{eqn:huer1}
\frac{d}{dt}\|P_t\phi\|_{W^*}^2&\leq  - 2\gamma  \| \nabla_p P_t \phi\|^2_{W^*} - \alpha \| P_t \phi \|_{W^*}^2+ \beta \| P_t \phi\|^2.
\end{align}
Somewhat similar to the unweighted setting, we are faced with the problem that, although we have a globally dissipative factor, namely $-\alpha \|P_t \phi \|_{W^*}^2$, we have introduced a growth factor, namely $\beta \|P_t \phi \|^2$.  Thus we need to somehow add a perturbation to the norm that compensates for this term.

In what follows, we perturb the norm $\| \cdot \|_{W^*}$ by $1+\delta A$; that is, we consider $\| \cdot \|_{W^* + 1+ \delta A}$ where $A$ is as in~\eqref{eqn:Adef} and, provided it exists following the analysis in Section~\ref{sub:theideai}, $\delta>0$ satisfies the following two properties:
\begin{itemize}
\item[(i)] $\| \cdot \|_{1+\delta A}$ and $\|\cdot \|$ are equivalent norms;
\item[(ii)] There exists $\lambda >0$ for which the following estimate holds for all $\phi$ in a `nice' dense family in $L^2(d\mu)$ with $\int_\X \phi \, d\mu=0$:
  \begin{align*}
    \forall t \geq 0, \qquad \frac{d}{dt} \|P_t \phi \|_{1+\delta A}^2 \leq - 2\lambda \|P_t \phi \|_{1+\delta A}^2.
  \end{align*}
\end{itemize}
If such a $\delta>0$ exists, we then find that 
\begin{align*}
  \frac{d}{dt} \|P_t \phi \|_{W^*+ 1+ \delta A}^2&= \frac{d}{dt} \| P_t \phi \|_{W^*}^2 + \frac{d}{dt} \| P_t \phi \|_{1+\delta A}^2\\
  & \leq - \alpha \| P_t \phi \|_{W^*}^2 + \beta \| P_t \phi \|^2 - 2\lambda \|P_t \phi \|_{1+\delta A}^2.  
\end{align*}
In order to make sure that $\lambda >0$ is large enough to subsume the $\beta$ term, we rescale the weak Lyapunov function by a factor~$m>0$ to be determined, so that $mW^*\in \mathscr{W}_{\alpha, m \beta}(L^*)$. Then, repeating the above produces
\begin{align*}
  \frac{d}{dt} \|P_t \phi \|_{mW^*+ 1+ \delta A}^2 \leq - \alpha \|P_t \phi \|_{m W^*}^2 + m \beta \| P_t \phi \|^2 - 2\lambda \|P_t \phi \|_{1+\delta A}^2.
\end{align*}
Picking $m>0$ small enough depending on $\lambda, \delta >0$ allows to conclude the argument. 

\subsection{Proof of parts~(i) of Theorems~\ref{thm:premain} and~\ref{thm:main}}
\label{sec:layout}

In order to prove the main general results, we first make precise the class of sufficiently smooth test functions $\phi:\X\rightarrow \R$ we consider. This allows us to more easily manipulate expressions to arrive at the desired inequalities, and then apply density arguments to see that the inequalities are satisfied for a larger class of functions. More precisely, we define 
\[
C_{\mathrm{c},0}^\infty(\X) = \left\{ \phi\in C^\infty(\X) \, \middle| \, \phi = \psi- \int_\X \psi \, d\mu \, \text{ for some } \psi \in C_\mathrm{c}^\infty(\X) \right\}.
\]
We note that a function in $C_{\mathrm{c},0}^\infty(\mathscr{X})$ \emph{does not} have compact support in general, since it is generally constant and non-zero outside of a compact set. We observe that, by Assumption~\ref{assump:basic} and H\"{o}rmander's theorem~\cite{Hor_67}, hypoellipticity implies that the function $P_t \phi$ belongs to~$C_{\mathrm{b}, 0}^\infty(\X)$ for $t>0$ whenever~$\phi \in C_{\mathrm{c},0}^\infty(\X)$, where 
\begin{align*}
  C_{\mathrm{b}, 0}^\infty(\X)= \left\{ \phi \in C^\infty(\X)\, \middle| \, \sup_{x \in \X}|\phi(x)| < +\infty, \,\, \int_\X \phi \, d\mu =0 \right\}.
\end{align*}

In order to conclude Theorem~\ref{thm:premain}(i) and Theorem~\ref{thm:main}(i), we assume that Proposition~\ref{prop:key} below holds true. This result provides some integrated form of dissipation. The proof of this proposition is postponed to Section~\ref{sec:auxest} since it relies on a number of technical estimates and commutations of operators. The approach for obtaining it, however, is exactly the same as in~\cite{dolbeault2015hypocoercivity}, but there are some key differences in our setting, especially in the proof of the `elliptic regularity estimate', which make the estimates slightly more involved as we are allowing a weaker growth condition on~$U$ than in~\cite{dolbeault2015hypocoercivity}.  

In order to state the proposition, observe that Assumption~\ref{assump:gc} implies that there exist constants $c_1\in(0,1)$ and $C_2>0$ independent of $\gamma$ such that
\begin{align}
\label{eqn:c1C2}
\Delta U(q)\leq c_1|\nabla U(q)|^2+C_2.
\end{align}
Consider $\varepsilon >0$ such that 
\begin{align}
  \label{eqn:eps}
  0 < \varepsilon < \frac{(1-c_1)^2}{4},
\end{align}
and let $C_\varepsilon$ be the corresponding constant as provided by Assumption~\ref{assump:gc}. Finally, define the constant $\eta_\varepsilon>0$ as
\begin{align}
  \label{eqn:constants}
  \eta_\varepsilon = \sqrt{2\left(1-\frac{4\varepsilon}{(1-c_1)^2}\right)^{-1}\max\left\{1,\frac{1}{2}\left(C_\varepsilon+\frac{2C_2\varepsilon}{1-c_1}\right)\right\}}. 
\end{align}

\begin{Proposition}
  \label{prop:key}
  Suppose that $U$ satisfies Assumptions~\ref{assump:basic} and~\ref{assump:gc}, and consider $W^*\in \mathscr{W}_{\alpha, \beta}(L^*)$ for some $\alpha, \beta >0$. Then, the following properties hold.  
  \begin{itemize}
  \item[(i)] The operator $A$ is bounded, and satisfies the estimate:
  \begin{align*}
    \forall \phi \in L^2(d\mu), \qquad \| A \phi \| \leq \frac{1}{2}\| (1-\Pi)\phi \|.
  \end{align*}
  \item[(ii)] For any $\phi \in C_{\mathrm{c},0}^\infty(\X)$ and $0 \leq s \leq t$, 
    \begin{align*}
      &\langle A P_t \phi, P_t \phi \rangle -  \langle AP_s  \phi, P_s\phi \rangle+ \int_s^t  \frac{ \rho}{\rho+1}\| \Pi P_u \phi\|^2 \, du\\
      &\qquad \qquad \leq  \int_s^t  
      \left[\left(\eta_\varepsilon+\frac\gamma2\right) \| (1-\Pi) P_u \phi\| \| \Pi P_u \phi\|  + \| (1-\Pi) P_u \phi\|^2      \right] du ,
    \end{align*}
    where the constant $\eta_\varepsilon >0$ is defined in~\eqref{eqn:constants}.
  \item[(iii)] For any $\phi \in C^\infty_{\mathrm{c},0}(\X)$ and $0 \leq s \leq t$, 
    \begin{align*}
      \|P_t \phi \|_{W^*}^2 \leq \| P_s\phi\|_{W^*}^2 - \int_s^t \left[ \alpha \| P_u \phi \|_{W^*}^2 - \beta \|P_u \phi \|^2 + 2\gamma \| \nabla_p(P_u \phi) \|^2_{W^*} \right] du. 
    \end{align*} 
  \end{itemize}
\end{Proposition}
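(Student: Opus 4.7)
Part~(iii) is the easiest. My plan is to apply~\eqref{eq:phi_L_phi} to $P_t\phi$ and then integrate by parts against $d\mu$ to write
\[
\frac{d}{dt}\|P_t\phi\|_{W^*}^2 = \int_\X W^* L\bigl((P_t\phi)^2\bigr)\,d\mu - 2\gamma\|\nabla_p P_t\phi\|_{W^*}^2 = \int_\X (P_t\phi)^2\, L^*W^*\,d\mu - 2\gamma\|\nabla_p P_t\phi\|_{W^*}^2.
\]
This integration by parts is legitimate because H\"ormander hypoellipticity places $P_t\phi$ in $C^\infty_{b,0}(\X)$ (hence bounded), while the strong integrability of $W^*$ makes all integrands absolutely integrable and forces boundary contributions at $\partial\mathscr{O}$ to vanish, being controlled by $\rme^{-U}$ which decays since $U\to\infty$ there. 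Inserting $L^*W^*\leq -\alpha W^*+\beta$ and integrating in time from $s$ to $t$ yields part~(iii) directly.

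For part~(i), I would set $g = A\phi$. By~\eqref{eqn:Adef} one has $g = \Pi g$ and $(1-L_\mathrm{OD})g = \Pi L_\mathrm{H}\phi$. Pairing this identity with $g$ in $L^2(d\mu)$ and using $-\langle L_\mathrm{OD}g,g\rangle = \|\nabla_q g\|^2$, the anti-symmetry of $L_\mathrm{H}$, the identity $L_\mathrm{H} g = p\cdot\nabla_q g$ (since $g$ depends on $q$ only), and $\Pi(p\cdot\nabla_q g)=0$ gives
\[
\|g\|^2 + \|\nabla_q g\|^2 \;=\; -\langle p\cdot\nabla_q g,\,(1-\Pi)\phi\rangle.
\]
Cauchy--Schwarz, the Gaussian-moment identity $\|p\cdot\nabla_q g\|^2 = \|\nabla_q g\|^2$, and Young's inequality $|ab|\leq a^2 + b^2/4$ then give $\|g\|^2 \leq \tfrac14\|(1-\Pi)\phi\|^2$ on the dense subspace where $A$ is first defined; density extends the bound to all of $L^2(d\mu)$.

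Part (ii) is the main obstacle. The plan is to differentiate $t\mapsto\langle AP_t\phi,P_t\phi\rangle$ (hypoellipticity justifies this on $\phi\in C^\infty_{c,0}(\X)$) and then use the decomposition~\eqref{eqn:Tis} into four terms $T_1,\ldots,T_4$. For $T_1$, the identity~\eqref{eq:coercivity_q} combined with the overdamped Poincar\'e inequality of Proposition~\ref{prop:poincare} provides the coercivity bound $T_1\leq -\frac{\rho}{\rho+1}\|\Pi P_t\phi\|^2$. Each of $T_2,T_3,T_4$ should then be controlled by a combination of $\|(1-\Pi)P_t\phi\|\,\|\Pi P_t\phi\|$ and $\|(1-\Pi)P_t\phi\|^2$. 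For $T_3$ the exploitable fact is that $L_\mathrm{OU}\Pi=0$, so $AL_\mathrm{OU}=AL_\mathrm{OU}(1-\Pi)$; combining part~(i) with the explicit form of $L_\mathrm{OU}$ will contribute the $\gamma/2$ portion of the cross-term coefficient. The hard part is $T_2$ and $T_4$: bounding $\langle AL_\mathrm{H}(1-\Pi)P_t\phi,P_t\phi\rangle$ together with $\langle L^*AP_t\phi,P_t\phi\rangle = -\langle L_\mathrm{H} AP_t\phi,P_t\phi\rangle + \gamma\langle L_\mathrm{OU} AP_t\phi,P_t\phi\rangle$ requires an ``elliptic regularity'' estimate on the resolvent $(1-L_\mathrm{OD}\Pi)^{-1}$ and on its commutator with $L_\mathrm{H}$. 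It is exactly here that Assumption~\ref{assump:gc} enters, through the splitting~\eqref{eqn:c1C2}, the smallness restriction~\eqref{eqn:eps} on $\varepsilon$, and the resulting constant $\eta_\varepsilon$ from~\eqref{eqn:constants}. These operator bounds are technical enough that I would defer them to Section~\ref{sec:auxest}, as the authors do. Once the $T_i$'s are bounded, summing and integrating from $s$ to $t$ delivers (ii).
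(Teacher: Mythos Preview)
Your formal outline matches the paper's approach closely: the decomposition~\eqref{eqn:Tis} for part~(ii), the pairing argument for part~(i), and the carr\'e-du-champ identity for part~(iii) are all exactly what the authors use. The operator bounds you allude to for $T_2,T_3,T_4$ are precisely Lemmas~\ref{lem:standard} and~\ref{lem:weight} together with Proposition~\ref{P:NablaSquared}, and your identification of where Assumption~\ref{assump:gc} enters is correct. One small correction: for $T_3$ the relevant identity is $AL_{\mathrm{OU}}=-A$ (not merely $L_{\mathrm{OU}}\Pi=0$), which the paper states directly and which makes the $\gamma/2$ contribution immediate via part~(i); and for $T_4$ note that $L_{\mathrm{OU}}A\phi=0$ since $A\phi$ is a function of~$q$ alone, so only the $L_{\mathrm H}$ piece survives.

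The genuine gap is in your justification of the integrations by parts. In part~(iii) you assert that strong integrability of~$W^*$ ``makes all integrands absolutely integrable and forces boundary contributions to vanish.'' But hypoellipticity only gives $P_t\phi\in C^\infty_{\mathrm b,0}(\X)$; it provides no a~priori bound on $\nabla_p P_t\phi$ or $\nabla_q P_t\phi$, and the identity $\int_\X W^* L(\psi^2)\,d\mu=\int_\X \psi^2 L^*W^*\,d\mu$ involves exactly those derivatives (moreover, strong integrability controls $W^*$ and $\nabla_p W^*$ but says nothing about $\nabla_q W^*$, which appears in $L^*W^*$ through $L_{\mathrm H}$). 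The same problem afflicts part~(i): the step $-\langle L_{\mathrm{OD}}g,g\rangle=\|\nabla_q g\|^2$ presupposes $g\in H^1(d\mu_{\mathrm{OD}})$, which is not obvious from $g\in C^\infty_{\mathrm b}(\mathscr{O})$ alone when $\mathscr{O}$ has singular boundary. The paper does not bypass these issues; its entire proof of Proposition~\ref{prop:key} in Section~\ref{sec:proof_roposition_prop:key} proceeds by inserting cutoffs $\chi_n(H)$, computing the resulting commutator error terms explicitly, and showing they vanish as $n\to\infty$ using the moment estimates on $|\nabla U|$ from Lemma~\ref{lem:diff}. That lemma, together with Proposition~\ref{prop:solutions}, is what actually makes the formal manipulations rigorous in the singular-potential setting, and it is the main technical content you are missing.
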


Note that, if $W^*\equiv 1$, it is possible to take $\alpha=\beta$ in the estimate of part~(iii), which then corresponds to the integral version of the simple decay estimate~\eqref{eqn:firstcomp}.   

With Proposition~\ref{prop:key} at hand, we can now complete the proofs of Theorem~\ref{thm:premain}(i) and Theorem~\ref{thm:main}(i).  

\begin{proof}[Proof of Theorem~\ref{thm:premain}(i)]
Let $\phi \in C_{\mathrm{c},0}^\infty(\X)$ and $\delta>0$.  Applying Proposition~\ref{prop:key}(ii) and~(iii) (with $W^*\equiv 1$ and $\alpha =\beta$ in part~(iii)), we have, for $0 \leq s \leq t$,
\begin{align*}
\| P_t \phi \|_{1+\delta A}^2 &\leq \| P_s \phi \|_{1+\delta A}^2 - \left(\int_s^t 2\gamma \| \nabla_p (P_u \phi) \|^2 + \frac{\delta \rho}{\rho+1}\| \Pi P_u \phi\|^2 \, du \right)\\
& \qquad + \delta \int_s^t \left[ \left(\eta_\varepsilon+\frac\gamma2\right) \| (1-\Pi) P_u \phi\| \| \Pi P_u \phi\| + \| (1-\Pi) P_u \phi\|^2 \right] du\\
&=: \| P_s \phi \|_{1+\delta A}^2 - \int_s^t S_1(P_u \phi) \, du + \delta \int_0^t S_2(P_u \phi) \, du.  
\end{align*}
Next, observe that, for any $\phi \in C_{\mathrm{b},0}^\infty(\mathscr{X})$, the Poincar\'{e} inequality~\eqref{eq:PoincareOU} gives 
\begin{align*}
  S_1(\phi) &= 2\gamma\|\nabla_p\phi\|^2 + \frac{\delta\rho}{\rho + 1}\| \Pi \phi\|^2 \geq 2\gamma\|(1-\Pi)\phi\|^2+\frac{\delta\rho}{\rho+1}\|\Pi \phi\|^2.
\end{align*}  
Combining this estimate with the other term~$S_2(\phi)$ produces
\begin{align}
  \label{eqn:quadform}
  S_1(\phi)-\delta S_2(\phi) &\geq X^T \mathbf{B} X,
\end{align}
with 
\begin{align*}
X=\begin{pmatrix}\|(1-\Pi)\phi\|\\
\|\Pi\phi\|\end{pmatrix},
\qquad
\mathbf{B}=
\begin{pmatrix}
  2\gamma-\delta & \dps -\frac{\delta}{2}\left(\eta_\varepsilon+\frac{\gamma}{2}\right)\\
  \dps -\frac{\delta}{2}\left(\eta_\varepsilon+\frac{\gamma}{2}\right) & \dps \frac{\delta\rho}{1+\rho}
\end{pmatrix}.
\end{align*}
Therefore, 
\begin{align*}
   S_1(\phi)-S_2(\phi) \geq \min\{ \Lambda_{+}, \Lambda_{-}\} \|\phi\|^2,
\end{align*}
where $\Lambda_+$, $\Lambda_{-}$ are the eigenvalues of the symmetric matrix $\mathbf{B}$. To analyze the eigenvalues, we denote the trace and determinant of $\mathbf{B}$ by
\begin{align}
  \label{eqn:eigenvalues}
  \mathbf{T}:=\mathrm{Tr}(\mathbf{B})=2\gamma-\frac{\delta}{1+\rho},
  \qquad
  \mathbf{D}:=\det(\mathbf{B})=\frac{\delta\rho}{1+\rho}\left(2\gamma-\delta\right)-\frac{\delta^2}{4}\left(\eta_\varepsilon+\frac{\gamma}{2}\right)^2.
\end{align} 
We can then express the eigenvalues of $\mathbf{B}$ as
\begin{equation}
  \label{eq:Lambda_pm}
  \Lambda_\pm=\frac{\mathbf{T}}{2}\pm \sqrt{\frac{\mathbf{T}^2}{4}-\mathbf{D}}=\frac{\mathbf{D}}{\frac{\mathbf{T}}{2}\mp\sqrt{\frac{\mathbf{T}^2}{4}-\mathbf{D}}},
\end{equation}
provided the denominator in the last equality is non-zero.  We wish to choose $\delta>0$ such that both eigenvalues are indeed positive. Since~$\mathbf{B}_{22}>0$, this is the case when~$\mathbf{D}>0$, \emph{i.e.} 
\eqnn{
  \label{E:Dpos}
  \delta<\frac{2\gamma}{1+\frac{1+\rho}{4\rho}\left(\eta_\varepsilon+\frac{\gamma}{2}\right)^2}.
}

On the other hand, we need to make sure we are picking $\delta >0$ so that $\| \cdot \|$ and $\| \cdot \|_{1+\delta A}$ are equivalent norms.  Proposition~\ref{prop:key}(i) implies that, for $0<\delta <2$,
\begin{equation}
  \label{eq:equivalence_norms}
  \left(1-\frac{\delta}{2}\right)\|\phi\|\leq\|\phi\|_{1+\delta A}\leq\left(1+\frac{\delta}{2}\right)\|\phi\|.
\end{equation}
Therefore, for any
\begin{align}
  \label{eqn:deltarestriction}
  \delta<\min\left\{1,\frac{2\gamma}{1+\frac{1+\rho}{4\rho}\left(\eta_\varepsilon+\frac{\gamma}{2}\right)^2}\right\},
\end{align}
it holds that $\|\phi\|_{1+\delta A}\leq \frac32 \|\phi\|$ and
\begin{align}\label{eqn:preGronwall}
    \|P_t\phi\|_{1+\delta A}^2\leq \|P_s \phi\|_{1+\delta A}^2 -5 \lambda  \int_s^t \|P_u\phi\|^2 \, du \leq  \|P_s \phi\|_{1+\delta A}^2 - 2\lambda\int_s^t \| P_u \phi \|^2_{1+\delta A}\, du,  
\end{align}
where
\[
\lambda=\frac{1}{5}\min\{\Lambda_+, \Lambda_{-}\} = \frac{\Lambda_-}{5}.
\]


The inequality~\eqref{eqn:preGronwall} implies by a Gronwall lemma that $\|P_t\phi\|_{1+\delta A}^2\leq \exp\left(-2\lambda t\right)\|\phi\|_{1+\delta A}^2$. Hence, by the equivalence of norms~\eqref{eq:equivalence_norms} and the choice of $\delta$, we have
\begin{align}
    \|P_t\phi\|^2 \leq 9 \exp(-2\lambda t) \| \phi \|^2.  
\end{align}
Since the above inequality is satisfied for any~$\phi \in C_{\mathrm{c},0}^\infty(\X)$, it follows by density that it is also satisfied for any $\phi \in L^2(d\mu)$ with $\int_\X \phi \, d\mu =0$, which allows to conclude the proof.
\end{proof}
  
With the proof of Theorem~\ref{thm:premain}(i) at hand, we can next turn to the proof of Theorem~\ref{thm:main}(i).

\begin{proof}[Proof of Theorem~\ref{thm:main}(i)]
Suppose that $W^*\in \mathscr{W}_{\alpha, \beta}(L^*)$ for some $\alpha, \beta >0$ and let $\delta >0$ be as in~\eqref{eqn:deltarestriction}.  We first observe that $mW^* \in \mathscr{W}_{\alpha, m \beta}(L^*)$ for any $m>0$. Applying Proposition~\ref{prop:key}(iii) and~\eqref{eqn:preGronwall}, we have, for any $\phi \in C^\infty_{\mathrm{c},0}(\X)$,
\begin{align}
   \nonumber \| P_t \phi \|_{m W^*+ 1+\delta A}^2 &=\| P_t \phi \|^2_{m W^*} + \|P_t \phi \|_{1+ \delta A}^2\\
    \label{eqn:mW} & \leq  
   \| P_s \phi \|_{mW^*+1+\delta A}^2  - \int_s^t \left[ \alpha  \| P_u \phi \|_{m W^*}^2 - m \beta \|P_u \phi \|^2 + 5 \lambda \|P_u \phi \|^2 \right] du,  
\end{align}
where $\lambda$ is as in~\eqref{eqn:deltarestriction}. Observe that, by choosing
\begin{align}
  m=\frac{5\eta \lambda}{\beta}
\end{align}
for some~$\eta \in (0,1)$, we find that 
\begin{align*}
-\alpha  \| P_u \phi \|_{m W^*}^2+ m \beta \|P_u \phi \|^2 - 5\lambda \|P_u \phi \|^2 & \leq - \min \{ \alpha, 2\lambda(1-\eta) \} \| P_u \phi \|_{mW^* +1 +\delta A}^2,
\end{align*}
where we used the equivalence of norms~\eqref{eq:equivalence_norms} and the choice of $\delta \in [0,1]$ as in~\eqref{eqn:deltarestriction}. In particular, it follows from a Gronwall lemma, an approximation argument and the equivalence of norms~\eqref{eq:equivalence_norms}, that, for any $\phi \in L^2( W^* \, d\mu)$ with $\int_{\X} \phi \, d\mu =0$,
\[
\forall t \geq 0, \qquad \|P_t \phi \|^2_{m W^* +1} \leq 9 \exp(-\min\{2\lambda(1-\varepsilon), \alpha\} t)\| \phi \|_{mW^* +1}^2.
\]
This gives the claimed result.
\end{proof}

\section{Scalings and consequences of Lyapunov structure}
\label{sec:scalings}

In this section, we analyze the rate of convergence to equilibrium in both the unweighted and weighted settings with respect to the friction parameter $\gamma>0$, ultimately proving Theorem~\ref{thm:premain}(ii) and Theorem~\ref{thm:main}(ii). In the unweighted setting (Section~\ref{sec:unweighted_lambda}), we study in particular the behavior the parameter $\lambda$ as $\gamma\rightarrow 0$ or $\gamma\rightarrow \infty$. In the weighted setting (Section~\ref{sec:Lyapunov}), we need of course the previous analysis of $\lambda$, but we also have to construct a Lyapunov function $W^*\in \mathscr{S}_{\alpha, \beta}(L^*)$ for some $\alpha, \beta >0$, with an explicit dependence of~$\alpha,\beta$ on~$\gamma>0$.     

\subsection{The unweighted setting and $\lambda$}
\label{sec:unweighted_lambda}

Here we recall that 
\begin{align*}
\lambda = \frac{1}{5}\max\{ \Lambda_+, \Lambda_- \}= \frac{1}{5}\Lambda_-,
\end{align*}
 where $\Lambda_{\pm}$ are the eigenvalues introduced in~\eqref{eq:Lambda_pm}. We also recall that $\delta >0$ needs to be chosen so that the restriction~\eqref{eqn:deltarestriction} is satisfied, and that $\eta_\varepsilon, \rho>0$ are constants which do not depend on~$\gamma$. 

\begin{proof}[Proof of Theorem~\ref{thm:premain}(ii)]
  We briefly recall the approach of~\cite[Section~5.3]{IOS19} for instance. Consider first the case when $\gamma \geq 1$. We choose $\delta = c \gamma^{-1}$ with $c>0$ small enough (independent of~$\gamma$) so that~\eqref{eqn:deltarestriction} is satisfied. Then, by simplifying the expression of $\Lambda_-$, we find that $\mathbf{T} \sim 2\gamma$ and~$\mathbf{D} \sim \frac{2c\rho}{1+\rho}-\frac{c^2}{16}$ as $\gamma\rightarrow \infty$, so that
 \begin{align*}
 \Lambda_- = \frac{c}{\gamma} \left( \frac{\rho}{\rho+1} - \frac{c}{32}\right) + \mathcal{O}\left(\frac{1}{\gamma^2}\right).  
 \end{align*}  
The prefactor of the dominant term~$\gamma^{-1}$ is positive for~$c>0$ sufficiently small, hence giving the correct scaling for~$\lambda$ as~$\gamma \to \infty$.  

We next consider the situation when $\gamma \leq 1$, for which we choose $\delta = c \gamma$ with~$c>0$ sufficiently small so that~\eqref{eqn:deltarestriction} holds. A simple computation shows that, as $\gamma \to 0$,
\begin{align*}
  \Lambda_- = \gamma \left[ 1 - \frac{c}{2(1+\rho)} - \sqrt{\left(1 - \frac{c}{2(1+\rho)}\right)^2 - \frac{c \rho}{1+\rho} (2-c) +\frac{c^2}{4} \eta_\varepsilon^2}\right] + \mathcal{O}(\gamma^2).  
\end{align*}
The prefactor of the dominant term~$\gamma$, equal to~$c\rho/(1+\rho) + \mathrm{O}(c^2)$, is positive for~$c>0$ sufficiently small, hence giving the correct scaling for~$\lambda$ as~$\gamma\to0$. This concludes the proof of the result. 
\end{proof}
 
\subsection{Explicit Lyapunov functions and convergence rates}
\label{sec:Lyapunov}

Our goal in this section is to construct an explicit Lyapunov function $W^*\in \mathscr{S}_{\alpha, \beta}(L^*)$ and analyze the value $\alpha >0$ we obtain from the construction.  Since the convergence rate parameter in the weighted setting is $\min\{ \alpha/2, \lambda(1-\varepsilon)\}$ where $\lambda>0$ is the parameter analyzed in Section~\ref{sec:unweighted_lambda} and~$\varepsilon \in (0,1)$, our ultimate goal here is to check whether the parameter $\alpha >0$ also scales as $\min(\gamma,\gamma^{-1})$ as $\gamma \to 0$ or $\gamma \to +\infty$. 

As a simple first example, we consider in Section~\ref{sub:scen1} a commonly employed condition on~$U$ in the literature (see~\cite{Wu_01, Talay_02, MSH_02}), relevant for potentials which have a polynomial-like growth at infinity. In this setting, we follow various previous works~\cite{Wu_01, Talay_02, MSH_02} and construct our Lyapunov function by adding a term~$p\cdot q$ to the Hamiltonian. We make sure that the scaling of~$\alpha$ with respect to~$\gamma$ is indeed~$\min(\gamma,\gamma^{-1})$. We then consider in Section~\ref{sub:general} the general situation of a potential satisfying Assumptions~\ref{assump:basic} and~\ref{assump:gc}, for which we construct a different Lyapunov function allowing us to write the proof of Theorem~\ref{thm:main}(ii).   

\subsubsection{Polynomial-like potentials}
\label{sub:scen1}

We suppose here that the potential $U$ satisfies Assumptions~\ref{assump:basic} and~\ref{assump:gc} and the following additional growth assumption.

\begin{Assumption}
  \label{cond:3}
  The potential $U$ is such that $U\in C^\infty(\R^d)$ and there exist constants $c_3, C_4, c_5 >0$ for which
  \[
  \forall q\in\R^d, \qquad \nabla U(q)\cdot q\geq c_3 \,U(q)-C_4, \qquad U(q) \geq c_5|q|^2.
  \]
\end{Assumption}

Under these assumptions, we now build a Lyapunov function $W\in \mathscr{S}_{\alpha, \beta}(L)$ in the same spirit as in \cite{Wu_01, Talay_02, MSH_02}.

\begin{Lemma}
  \label{lem:Polynomial-Like}
  Suppose that $U$ satisfies Assumptions~\ref{assump:basic}, \ref{assump:gc} and~\ref{cond:3}, and introduce
  \begin{align}
    \label{eq:W_eta_kappa}
    W_{\kappa}(q,p) = H(q,p)+\kappa\,q\cdot p.
  \end{align}
  Then there exist~$\overline{\kappa}>0$ such that
  \eqn{
    \forall\gamma\in(0,1],\qquad LW_{\overline{\kappa}\gamma}(q,p)&\leq -\frac{\overline{\kappa}c_3}{2}\gamma \, W_{\overline{\kappa}\gamma} + \gamma d+C_4\overline{\kappa}\gamma, \\
    \forall\gamma\in[1,\infty),\qquad LW_{\overline{\kappa}/\gamma}(q,p)&\leq-\frac{\overline{\kappa}c_3}{2\gamma} W_{\overline{\kappa}/\gamma}+ \gamma d+\frac{C_4\overline{\kappa}}{\gamma}.
    }
\end{Lemma}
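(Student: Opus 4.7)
The plan is to verify the inequality by direct computation of $LW_\kappa$, followed by judicious Young's inequality estimates on the cross term $q\cdot p$, with the scaling $\kappa=\overline{\kappa}\min(\gamma,\gamma^{-1})$ chosen precisely so that the same choice of $\overline{\kappa}$ works in both regimes. First I would compute, from $L = p\cdot\nabla_q -\nabla U\cdot\nabla_p -\gamma p\cdot\nabla_p +\gamma\Delta_p$, the two identities
\begin{align*}
LH &= -\gamma|p|^2+\gamma d, \\
L(q\cdot p) &= |p|^2 - q\cdot\nabla U(q) - \gamma\, q\cdot p,
\end{align*}
yielding
\begin{equation*}
LW_\kappa = (\kappa-\gamma)|p|^2 - \kappa\, q\cdot\nabla U(q) - \kappa\gamma\, q\cdot p+\gamma d.
\end{equation*}

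Next, I would insert the two parts of Assumption~\ref{cond:3}. The first bound gives $-\kappa\, q\cdot\nabla U(q)\leq -\kappa c_3 U + \kappa C_4$, and the second, $U\geq c_5|q|^2$, lets one convert any $|q|^2$-term into a fraction of $U$. For the case $\gamma\leq 1$ with $\kappa=\overline{\kappa}\gamma$, Young's inequality yields, for any $\varepsilon>0$,
\begin{equation*}
|\overline{\kappa}\gamma^2\,q\cdot p|\leq \frac{\varepsilon}{2}|p|^2+\frac{\overline{\kappa}^2\gamma^4}{2c_5\varepsilon}U.
\end{equation*}
Choosing $\varepsilon = \gamma/2$ and using $\gamma\leq 1$ to dominate higher powers of $\gamma$, the inequality for $LW_{\overline{\kappa}\gamma}$ becomes, up to additive constants of order $\gamma d + \overline{\kappa}\gamma C_4$, a combination of $|p|^2$, $U$, and a leftover $q\cdot p$-term. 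Choosing $\overline{\kappa}$ small enough (independent of $\gamma$) so that the coefficients of $|p|^2$ and $U$ are more negative than $\frac{\overline{\kappa}c_3\gamma}{2}$ times the corresponding coefficients of $W_{\overline{\kappa}\gamma}=\tfrac12|p|^2+U+\overline{\kappa}\gamma\, q\cdot p$, and reabsorbing the residual $q\cdot p$-term by a second application of Young's inequality (using once more $U\geq c_5|q|^2$), produces the desired bound $LW_{\overline{\kappa}\gamma}\leq -\tfrac{\overline{\kappa}c_3}{2}\gamma\, W_{\overline{\kappa}\gamma}+\gamma d + C_4\overline{\kappa}\gamma$.

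The case $\gamma\geq 1$ with $\kappa=\overline{\kappa}/\gamma$ proceeds by the same recipe, but with the Young splitting $|\overline{\kappa}\, q\cdot p|\leq \tfrac{\gamma}{4}|p|^2 + \tfrac{\overline{\kappa}^2}{c_5\gamma}U$ chosen so that the dominant dissipative term $-\gamma|p|^2$ (coming from $-\gamma|p|^2+\overline{\kappa}\gamma^{-1}|p|^2$) absorbs the $|p|^2$ contribution, while the $-\tfrac{\overline{\kappa}c_3}{\gamma}U$ term absorbs the remaining $U$-contribution provided $\overline{\kappa}\leq c_3c_5/2$. The leftover $q\cdot p$-term arising when one adds $\tfrac{\overline{\kappa}c_3}{2\gamma}W_{\overline{\kappa}/\gamma}$ is again reabsorbed by Young's inequality, using $\gamma\geq 1$ to control the prefactor.

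The main obstacle, and the reason the lemma is stated with a uniform $\overline{\kappa}$ on each friction regime, is ensuring that a single small constant $\overline{\kappa}$ simultaneously satisfies finitely many smallness conditions (roughly $\overline{\kappa}\leq 1/(1+c_3/4)$ and $\overline{\kappa}\leq c_3c_5/2$) and yields positive coefficients after absorbing the cross term. This is purely algebraic and can be carried out once and for all; the key qualitative observation is that the \emph{mixed} scaling $\kappa\gamma$ in the coupling term, which equals $\overline{\kappa}\gamma^2$ for $\gamma\leq 1$ and $\overline{\kappa}$ for $\gamma\geq 1$, is precisely what makes the $q\cdot p$-contribution subdominant in each regime and produces the claimed $\min(\gamma,\gamma^{-1})$ scaling of the Lyapunov constant.
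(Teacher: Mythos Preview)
Your proposal is correct and follows essentially the same route as the paper: compute $LW_\kappa$ directly, apply Assumption~\ref{cond:3} to replace $-\kappa\,q\cdot\nabla U$ by $-c_3\kappa U + C_4\kappa$, and then verify that the resulting quadratic form in $(q,p)$ dominates $\tfrac{c_3\kappa}{2}W_\kappa$ for the stated choices $\kappa=\overline{\kappa}\gamma$ (when $\gamma\leq 1$) and $\kappa=\overline{\kappa}/\gamma$ (when $\gamma\geq 1$). The only cosmetic difference is that the paper packages this last step as nonnegativity of an explicit $2\times 2$ matrix (checked via its determinant), whereas you carry it out with successive Young inequalities; the two are equivalent and lead to the same smallness conditions on~$\overline{\kappa}$.
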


Note that the scaling we obtain on the decay rate~$\alpha$ for~$\gamma \geq 1$ is not of order~$\gamma$ as in Theorem~\ref{thm:main}(ii), but of order~$1/\gamma$. This is not an issue since the associated exponential decay rate in~\eqref{eq:L2weighted} will still scale as~$\min(\gamma,\gamma^{-1})$. The difference with the scaling obtained in Theorem~\ref{thm:main}(ii) comes from the fact that the Lyapunov function is not of exponential type here. The smaller value of~$\alpha$ is compensated by a much smaller and more explicit value of~$\beta$.

\begin{proof}
  In order for~$W_{\kappa}>0$ and $W_{\kappa}\rightarrow \infty$ as $H\rightarrow \infty$, it is sufficient by Assumption~\ref{cond:3} and a Cauchy--Schwarz inequality that 
\begin{align}
  \label{eqn:firstbound}
  \kappa < \sqrt{2 c_5}. 
\end{align}   
In view of Assumptions~\ref{assump:basic} and~\ref{cond:3}, 
\begin{align} 
L W_{\kappa} (q,p)&= -(\gamma-\kappa)|p|^2-\kappa \nabla U(q)\cdot q-\kappa \gamma \,p\cdot q+\gamma d \nonumber  \\
&\leq -(\gamma-\kappa)|p|^2-c_3\kappa \,U(q)-\kappa \gamma \,p\cdot q +\gamma d+C_4\kappa. \label{eq:LW_spec}
\end{align}
When $\delta \leq c_3 \kappa$, it holds
\begin{align*}
(\gamma-\kappa)|p|^2+c_3\kappa \,U(q)+\kappa \gamma \,p\cdot q-\delta W_{\kappa}(q,p) \geq \begin{pmatrix} q \\ p \end{pmatrix}^T \begin{pmatrix} (c_3 \kappa-\delta)c_5 & \kappa(\gamma-\delta)/2 \\ \kappa(\gamma-\delta)/2 & \gamma-\kappa-\delta/2 \end{pmatrix}\begin{pmatrix} q \\ p \end{pmatrix}.  
\end{align*}
The aim is to prove that the matrix appearing on the right-hand side of the previous inequality is nonnegative.

We first consider the case $\gamma \leq 1$. Setting $\kappa=\overline{\kappa}\gamma$ and $\delta = c_3\kappa/2$, we obtain 
\[
\begin{pmatrix} (c_3 \kappa-\delta)c_5 & \kappa(\gamma-\delta)/2 \\ \kappa(\gamma-\delta)/2 & \gamma-\kappa-\delta/2 \end{pmatrix} = \gamma \begin{pmatrix} \dps \frac{c_3 c_5}{2} \overline{\kappa} & \dps \frac{\overline{\kappa}}{2} \gamma \left(1-\frac{c_3\overline{\kappa}}{2}\right) \\ \dps \frac{\overline{\kappa}}{2} \gamma \left(1-\frac{c_3\overline{\kappa}}{2}\right) & \dps 1-\overline{\kappa}\left(1+\frac{c_3}{4}\right) \end{pmatrix}.
\]
We finally choose $\overline{\kappa}>0$ sufficiently small so that the latter matrix is positive for all~$\gamma \leq 1$ (which is possible since the determinant is of order~$c_3 c_5 \overline{\kappa}/2 + \mathrm{O}(\overline{\kappa}^2)$ uniformly in~$\gamma \leq 1$) and~\eqref{eqn:firstbound} is satisfied for $\gamma = 1$, which provides the claimed inequality for~$\gamma \leq 1$. 

For $\gamma \geq 1$, we still set $\delta = c_3\kappa/2$ but consider now $\kappa = \overline{\kappa}/\gamma$. Then,
\[
\begin{pmatrix} (c_3 \kappa-\delta)c_5 & \kappa(\gamma-\delta)/2 \\ \kappa(\gamma-\delta)/2 & \gamma-\kappa-\delta/2 \end{pmatrix} = \begin{pmatrix} \dps \frac{c_3 c_5 \overline{\kappa}}{2\gamma} & \dps \frac{\overline{\kappa}}{2} \left(1-\frac{c_3\overline{\kappa}}{2\gamma^2}\right) \\ \dps \frac{\overline{\kappa}}{2} \left(1-\frac{c_3\overline{\kappa}}{2\gamma^2}\right) & \dps \gamma-\frac{\overline{\kappa}}{\gamma}\left(1+\frac{c_3}{4}\right) \end{pmatrix}.
\]
When~$\overline{\kappa}>0$ is sufficiently small, the determinant of the matrix on the right-hand side is positive, of order~$c_3 c_5 \overline{\kappa}/2 + \mathrm{O}(\overline{\kappa}^2)$ uniformly in~$\gamma \geq 1$. Upon further reducing the value~$\overline{\kappa}$ found for~$\gamma \leq 1$, we finally obtain the claimed inequalities.
\end{proof}

\subsubsection{The general case} 
\label{sub:general}

If we remove Assumption~\ref{cond:3}, then this limits the types of known Lyapunov functions one can consider. In fact, the function in~\eqref{eq:W_eta_kappa} will not satisfy~\eqref{eq:LW_spec} as a Lyapunov function if we merely consider Assumptions~\ref{assump:basic} and~\ref{assump:gc}. In order to deal with these issues, we will slightly modify the form of the function as in~\cite{HerMat_19, BGH_19}. Note that that a similar form was used in~\cite{CEHRB_18,LuMat_19}.  

\begin{proof}[Proof of Theorem~\ref{thm:main}(ii)]
Our goal here is to construct a Lyapunov function for~$L$ of the form
\begin{equation}
  \label{eq:Lyap_proof_Thm2}
  W(q,p) = \rme^{\eta \widetilde{H}(q,p)},
\end{equation}
where 
\[
\widetilde{H}(q,p) = H(q,p)+\kappa\frac{p\cdot\nabla U(q)}{|\nabla U(q)|^2+\sigma}=: H(q,p) + \kappa \psi(q,p),
\]
for some constants $\kappa, \sigma>0$ and~$\eta \in (0,1)$ to be determined later. The Lyapunov function for~$L^*$ is then obtained by Proposition~\ref{prop:relLyap}.

First observe that $|\psi(q,p)| \leq \sigma^{-1/2}|p|$, so that, for any choice of $\kappa, \sigma, \eta >0$, it holds $W>0$ and $W\to+\infty$ as $H\to+\infty$. Furthermore, for any $\eta \in (0,1)$ and $\kappa, \sigma >0$, it is easy to check that $W$ is strongly integrable (see Definition~\ref{def:Lyap}). Next, to help compute $L W$, note that 
\eqn{
  L \widetilde{H}(q,p)= LH + \kappa L \psi &=-\gamma|p|^2 + \gamma d-\kappa\gamma\frac{p\cdot\nabla U(q)}{|\nabla U(q)|^2+\sigma}-\kappa\frac{|\nabla U(q)|^2}{|\nabla U(q)|^2+\sigma}\\
  &\qquad+\kappa\frac{p\cdot\nabla^2 U(q) p}{|\nabla U(q)|^2+\sigma}-2\kappa\frac{p\cdot\nabla U(q)}{|\nabla U(q)|^2+\sigma}\,\frac{\nabla U(q)\cdot\nabla^2 U(q) p}{|\nabla U(q)|^2+\sigma}.
}
Using Assumption~\ref{assump:gc} on the two terms of the second line gives
\eqn{L\widetilde{H}(q,p)
&\leq\gamma d+\left[-\gamma+\kappa\frac{\varepsilon|\nabla U(q)|^2+C_\varepsilon}{|\nabla U(q)|^2+\sigma}\left(1+2\frac{|\nabla U(q)|^2}{|\nabla U(q)|^2+\sigma}\right)\right]|p|^2-\kappa\frac{|\nabla U(q)|^2}{|\nabla U(q)|^2+\sigma}\\
&\qquad-\kappa\gamma\frac{p\cdot\nabla U(q)}{|\nabla U(q)|^2+\sigma}.}
We choose $\sigma > C_\varepsilon/\varepsilon$, so that the coefficient of~$|p|^2$ achieves a maximum as $|\nabla U(q)| \to +\infty$; the corresponding value being smaller than~$-\gamma+3\kappa \varepsilon$. Therefore, 
\eqn{L\widetilde{H}(q,p)&\leq\gamma d+\left(-\gamma+3\kappa\varepsilon\right)|p|^2-\kappa\frac{|\nabla U(q)|^2}{|\nabla U(q)|^2+\sigma}-\kappa\gamma\frac{p\cdot\nabla U(q)}{|\nabla U(q)|^2+\sigma}.}
Using this inequality along with the fact that for all $V\in C^2(\mathscr{O})$ and $\eta>0$,
\[
L\left( \rme^{\eta V} \right) = \eta \left( LV+\eta\gamma|\nabla_p V|^2\right)\rme^{\eta V},
\]
the function $W = \rme^{\eta \widetilde{H}}$ in~\eqref{eq:Lyap_proof_Thm2} satisfies
\begin{align*}
\frac{L W(q,p)}{\eta W(q,p)}&= L\widetilde{H}(q,p)+\eta\gamma \left|\nabla_p \widetilde{H}(q,p)\right|^2\\
&\leq \gamma d+\left(-\gamma+3\kappa\varepsilon\right)|p|^2-\kappa\frac{|\nabla U(q)|^2}{|\nabla U(q)|^2+\sigma}-\kappa\gamma\frac{p\cdot\nabla U(q)}{|\nabla U(q)|^2+\sigma}\\
&\qquad+\eta\gamma \left(|p|^2+2\kappa\frac{p\cdot\nabla U(q)}{|\nabla U(q)|^2+\sigma}+\kappa^2 \frac{|\nabla U(q)|^2}{(|\nabla U(q)|^2+\sigma)^2}\right)\\
&\leq \gamma d  - \left((1-\eta) \gamma - 3 \kappa \varepsilon\right) |p|^2 - \kappa\frac{|\nabla U(q)|^2}{|\nabla U(q)|^2+ \sigma} + \frac{\kappa^2 \gamma}{|\nabla U(q)|^2 + \sigma} \\
& \qquad + |2\eta-1|\kappa\gamma\frac{\left| p\cdot\nabla U(q) \right|}{|\nabla U(q)|^2+\sigma},
\end{align*}
where we used that $\eta \in [0,1]$ (so that $|2\eta-1| \leq 1$ in particular). Using Young's inequality for the last term, hereby introducing a constant~$C_\eta >0$ which can be made as large as wanted depending on the value of~$\eta$, we finally obtain
\begin{equation}
  \label{eq:LW_on_W}
  \frac{L W(q,p)}{\eta W(q,p)} \leq \gamma d  - \left[\left(1-\eta-C_\eta^{-1}\right) \gamma - 3 \kappa \varepsilon \right] |p|^2 - \kappa\frac{|\nabla U(q)|^2}{|\nabla U(q)|^2+ \sigma} + \left(\frac{C_\eta}{4}+1\right)  \frac{\kappa^2 \gamma}{|\nabla U(q)|^2 + \sigma}.
\end{equation}

We next choose the parameters for the function~$W$ in~\eqref{eq:Lyap_proof_Thm2}. These parameters are the same for the two limiting regimes we consider, namely $\gamma\ll 1$ and $\gamma \gg 1$, but the estimate for $\beta$ changes depending on which case is considered. Let us first fix some value $\eta \in (0,1)$, independently of $\gamma$. The regime where $U$ is large but $|p|$ is small forces us to choose~$\kappa$ to be order~$\gamma$. This is because, in particular, when $|p|$ is small and $U(q)$ is sufficiently large, we need 
\begin{align*}
\gamma d- \kappa\frac{|\nabla U(q)|^2}{|\nabla U(q)|^2+ \sigma} < 0. 
\end{align*}
Specifically, we set $\kappa = 2\gamma d$ and consider $\varepsilon, C_\eta>0$ independent of $\gamma$ and such that 
\begin{align*}
6d \varepsilon +C_\eta^{-1} \leq \frac{1-\eta}{2},  
\end{align*}
so that $(1-\eta-C_\eta^{-1}) \gamma - 3 \kappa \varepsilon \geq (1-\eta)\gamma/2$.

With these choices, consider first the case $\gamma \leq 1$.  Then, on a region of the form
\begin{align*}
\mathscr{R}= \{ |p| > D\} \cup \{ |\nabla U| > D\},
\end{align*}
where $D>0$ is large but independent of $\gamma$, we have that 
\begin{align*}
L W \leq -c \gamma W
\end{align*}
where $c>0$ is independent of $\gamma$.  Now the complementary region 
\begin{align*}
\mathscr{R}^c = \{ |p| \leq D \} \cap \{ |\nabla U| \leq D\}
\end{align*} 
is compact. Moreover, $W$ is in this case bounded on $\mathscr{R}^c$ independently of $\gamma \leq 1$ since $0 \leq \kappa \leq 2d$. Since the right-hand side~\eqref{eq:LW_on_W} is of order~$\gamma$ on~$\mathscr{R}$, this in turn implies the estimate
\begin{align*}
L W \leq -c \gamma W + C \gamma
\end{align*} 
for some constant $C>0$ independent of $\gamma$, which indeed gives~\eqref{eq:scaling_Thm2_gamma_leq_1}. Note that this agrees with the estimate in Lemma~\ref{lem:Polynomial-Like} obtained when~$U$ satisfies Assumption~\ref{cond:3} in addition to Assumption~\ref{assump:gc}, but the function~$W$ constructed here grows much faster for~$H$ large than the one in~\eqref{eq:W_eta_kappa}.  

We next consider the case $\gamma \geq 1$.  Again, we maintain the same choices of $\kappa, \varepsilon, C_\eta$ as before, but this time we need to enlarge the region depending on the size of $\gamma$ in order to control the remainder term $(1+C_\eta/4) \kappa^2 \gamma/(|\nabla U|^2 + \sigma)$. More precisely, in the region
\begin{align*}
\mathscr{R}_\gamma= \{|p| > D \}\cup \{ |\nabla U | > D \gamma\}
\end{align*}
where $D>0$ is large but independent of $\gamma$, it holds 
\begin{align*}
  \forall (q,p) \in \mathscr{R}_\gamma, \qquad (L W)(q,p) \leq - c \gamma W(q,p)
\end{align*}
for some $c>0$ independent of~$\gamma$. On the complementary region 
\begin{align*}
\mathscr{R}^c_\gamma= \{ |p| \leq D \} \cap \{ |\nabla U | \leq D \gamma \},
\end{align*}
the function~$W$ is bounded by~$\mathscr{M}_\gamma:=\max_{\mathscr{R}_\gamma^c} W$. We therefore have the global estimate
\[
LW \leq - c \gamma W + C \gamma^3 \mathscr{M}_\gamma
\]
for some constant~$C>0$ independent of $\gamma$, which is indeed~\eqref{eq:scaling_Thm2_gamma_geq_1}. 
\end{proof}

\section{Auxiliary estimates and the proof of Proposition~\ref{prop:key}}
\label{sec:auxest}

In this section, we make precise the definition of the operator $A$ introduced in~\eqref{eqn:Adef} and deduce a number of estimates that will be crucial in the proof of Proposition~\ref{prop:key}.  At times, we will also need to make use of a sequence of cutoff functions indexed by $n \in \N$, 
\begin{align}
\label{eqn:cutoffp}
\chi_n\in C^\infty([0, \infty); [0,1]),
\end{align}
satisfying
\begin{align}
\chi_n(t)= \begin{cases}
1 & \text{ if } t\leq n, \\
0 & \text{ if } t\geq n+1,
\end{cases} \qquad \chi_n' \leq 0, \qquad \sup_{n \geq 1} \left\{ \|\chi_n'\|_\infty + \|\chi_n''\|_\infty \right\} < +\infty, 
\end{align}
where $\|\varphi\|_\infty := \sup_{x \in \X}|\varphi(x)|$ for $\varphi$ bounded measurable. 
Depending on the context, we use cutoff functions of the form $\chi_n(U(q))$ or $\chi_n(H(q,p))$.  

In what follows, we will also utilize the gradient dynamics on $\mathscr{O}$ given by~\eqref{eqn:gradient}.  Under Assumptions~\ref{assump:basic} and~\ref{assump:gc}, it follows that the solution $q_t$ of~\eqref{eqn:gradient} is non-explosive and has a unique invariant probability measure given by the $q$-marginal of $\mu$, namely $\mu_{\mathrm{OD}}$. Indeed, invariance of $\mu_{\mathrm{OD}}$ follows by Assumption~\ref{assump:basic}(i) and a direct calculation showing that
\begin{align*}
L^\dagger_{\mathrm{OD}} \left(\rme^{-U}\right) =0,
\end{align*}
where $L^\dagger_{\mathrm{OD}}$ denotes the formal $L^2(dq)$ adjoint of $L_{\mathrm{OD}}$.  Furthermore, uniqueness of $\mu_{\mathrm{OD}}$ follows by Assumption~\ref{assump:basic}(i)-(ii) since $q_t$ is a uniformly non-degenerate diffusion~\cite{kliemann1987recurrence, rey-bellet2006ergodic}.

\subsection{Regularity estimates and moments of $|\nabla U|$}
\label{sec:estimates_tech}

In this section, we establish some basic facts about solutions $\psi$ of the equation 
\begin{align}
\label{eqn:ODpsiphi}
(1-L_{\mathrm{OD}})\psi  = \phi,
\end{align}  
for a given $\phi \in C_\mathrm{b}^\infty(\mathscr{O})$.  We also show that the gradient of $|\nabla U|$ has finite moments with respect to $\mu_{\mathrm{OD}}$.   
 
\begin{Proposition}
\label{prop:solutions}
Suppose that $U$ satisfies Assumptions~\ref{assump:basic} and~\ref{assump:gc}, and let $\phi \in C_\mathrm{b}^\infty(\mathscr{O})$.  Then there exists a unique classical solution $\psi\in C_\mathrm{b}^\infty(\mathscr{O})$ of the equation~\eqref{eqn:ODpsiphi}, which can moreover be expressed by the Green's formula
\begin{align}
\label{formula:psi}
\psi(q)= \E_q\left[ \int_0^\infty \rme^{-s} \phi(q_{s}) \, ds\right],   
\end{align}
where $q_t$ satisfies~\eqref{eqn:gradient}.  
\end{Proposition}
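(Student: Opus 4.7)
The plan is to define $\psi$ directly by the right-hand side of~\eqref{formula:psi}, verify that this candidate is bounded and smooth, check that it satisfies the PDE~\eqref{eqn:ODpsiphi}, and finally establish uniqueness by a stochastic-representation argument. Because the gradient dynamics~\eqref{eqn:gradient} is non-explosive on $\mathscr{O}$ under Assumptions~\ref{assump:basic}--\ref{assump:gc} (as recalled at the start of Section~\ref{sec:auxest}), the integral in~\eqref{formula:psi} is well defined and the bound $|\psi(q)| \leq \|\phi\|_\infty$ is immediate.

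For smoothness, I would rewrite $\psi(q) = \int_0^\infty \rme^{-s} (P_s^{\mathrm{OD}}\phi)(q)\,ds$, where $P_s^{\mathrm{OD}}$ denotes the semigroup generated by $L_{\mathrm{OD}}$. Since $L_{\mathrm{OD}} = \Delta_q - \nabla U \cdot \nabla_q$ is locally uniformly elliptic with $C^\infty(\mathscr{O})$ coefficients, interior parabolic Schauder estimates applied on any relatively compact open set $V$ with $\overline{V} \subset \mathscr{O}$ yield, for each $k\in\N$, locally-uniform-in-$s$ bounds $\|P_s^{\mathrm{OD}}\phi\|_{C^k(V)} \leq C_{V,k}\|\phi\|_\infty$ that are integrable in $s$ against the weight $\rme^{-s}$. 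Dominated convergence then permits differentiation under the integral to arbitrary order, giving $\psi \in C_\mathrm{b}^\infty(\mathscr{O})$.

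To verify the PDE, I would exploit the Markov property of the gradient dynamics to rewrite the resolvent identity as
\[
\psi(q) = \E_q\!\left[\int_0^t \rme^{-s}\phi(q_s)\,ds\right] + \E_q\!\left[\rme^{-t}\psi(q_t)\right],
\]
which shows that $M_t := \rme^{-t}\psi(q_t) + \int_0^t \rme^{-s}\phi(q_s)\,ds$ is a martingale with $M_0 = \psi(q)$, bounded on bounded intervals. On the other hand, since $\psi$ is now known to be smooth on $\mathscr{O}$ and $q_t$ stays in $\mathscr{O}$ almost surely, It\^o's formula gives the alternative decomposition $M_t = \psi(q) + \int_0^t \rme^{-s}[(L_{\mathrm{OD}} - 1)\psi + \phi](q_s)\,ds + N_t$ with $N_t$ a local martingale. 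Equating the bounded-variation parts and differentiating at $t=0$ forces $(1-L_{\mathrm{OD}})\psi = \phi$ pointwise on $\mathscr{O}$.

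Uniqueness will follow from an analogous, simpler It\^o argument: if $\widetilde\psi \in C_\mathrm{b}^\infty(\mathscr{O})$ is another classical solution, then $u := \psi - \widetilde\psi$ is bounded and satisfies $L_{\mathrm{OD}} u = u$, so applying It\^o to $\rme^{-t}u(q_t)$ stopped at $\tau_n := \inf\{t \geq 0 \,:\, U(q_t) \geq n\}$ gives $u(q) = \E_q[\rme^{-(t\wedge\tau_n)}u(q_{t\wedge\tau_n})]$; non-explosion sends $\tau_n\to\infty$ almost surely, and then letting $t \to \infty$ yields $u \equiv 0$ by boundedness. The main obstacle will be the smoothness step: since $\mathscr{O}$ may be a proper subdomain of $\R^d$ and $\nabla U$ is unbounded near $\partial\mathscr{O}$, no global regularity estimate is available, so the argument must be localized to subdomains $V \Subset \mathscr{O}$ where the coefficients of $L_{\mathrm{OD}}$ are bounded and smooth before invoking interior Schauder estimates on $\partial_s P_s^{\mathrm{OD}}\phi = L_{\mathrm{OD}} P_s^{\mathrm{OD}}\phi$.
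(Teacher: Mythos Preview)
Your approach is correct but genuinely different from the paper's, and the comparison is instructive. The paper argues \emph{elliptically}: it solves the Dirichlet problem $(1-L_{\mathrm{OD}})\psi_k=\phi$ on the bounded sublevel sets~$\mathscr{O}_k$ (where uniform ellipticity and smooth boundary make this classical), writes each~$\psi_k$ by the stopped Feynman--Kac formula, shows $\psi_k\to\psi$ pointwise via non-explosion, passes to the limit in the distributional formulation, and then upgrades to a classical solution by elliptic regularity. You instead argue \emph{parabolically}, working with the full semigroup~$P_s^{\mathrm{OD}}$ and invoking interior parabolic smoothing to justify differentiating the resolvent integral. Both are valid; the paper's route has the advantage of being entirely self-contained within elliptic theory and of making the role of the exhaustion~$\mathscr{O}_k$ (from Assumption~\ref{assump:basic}(ii)) explicit, whereas your route is more direct once parabolic regularity is taken for granted. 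Your uniqueness argument is essentially identical to the paper's.

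One small point worth tightening: your stated bound $\|P_s^{\mathrm{OD}}\phi\|_{C^k(V)}\leq C_{V,k}\|\phi\|_\infty$ cannot hold uniformly down to $s=0$, since smoothing requires positive time. For $s\geq 1$ the bound follows from interior parabolic estimates combined with the semigroup property and $\|P_{s-1}^{\mathrm{OD}}\phi\|_\infty\leq\|\phi\|_\infty$; for $s\in[0,1]$ you instead need a bound in terms of $\|\phi\|_{C^k(V')}$ on a slightly larger $V'\Subset\mathscr{O}$, which is finite because $\phi\in C^\infty(\mathscr{O})$. Either way the map $s\mapsto\|P_s^{\mathrm{OD}}\phi\|_{C^k(V)}$ is bounded on $[0,\infty)$, so the integrability against $\rme^{-s}$ and the differentiation under the integral go through.
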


\begin{proof}
We first prove the existence of solutions belonging to the class $C_\mathrm{b}^\infty(\mathscr{O})$. In fact, we show that $\psi$ defined by the formula~\eqref{formula:psi} belongs to this class of functions and satisfies the desired equation.  

Consider the bounded open domains defined in Assumption~\ref{assump:basic}(ii) with boundaries $\partial \mathscr{O}_k= \{q\in \mathscr{O} \, : \, U(q)=k\}$, as well as the sequence of boundary-value problems
\begin{equation}
  \label{eq:eq_psi_k}
  \left\{ \begin{aligned}
    (1-L_{\mathrm{OD}}) \psi_k = \phi & \text{ on } \mathscr{O}_k, \\
    \psi_k=0 & \text{ on } \partial \mathscr{O}_k.  
  \end{aligned} \right.
\end{equation}   
By Assumption~\ref{assump:basic}(iv), there exists $k_* \in \N$ such that $|\nabla U| >0$ on the boundary $\partial \mathscr{O}_k$ for all $k\geq k_*$ and recall that $U\in C^\infty(\mathscr{O})$. The boundary $\partial \mathscr{O}_k$ is therefore $C^\infty$ for $k\geq k_*$. Since the operator $1-L_{\mathrm{OD}}$ is uniformly elliptic and the domain~$\mathscr{O}_k$ is bounded, there exists a unique solution $\psi_k \in C^\infty(\mathscr{O}_k)\cap C_\mathrm{b}(\overline{\mathscr{O}_k})$ to the boundary value problem~\eqref{eq:eq_psi_k} (see for instance~\cite[Chapter~6]{Evans10}). Using It\^{o}'s formula applied to $\rme^{-t} \psi_k(q_t)$ and a nearly identical line of reasoning as in the proof of~\cite[Theorem 9.1.1]{Oks_03}, it follows that this unique solution $\psi_k$ is given by the Feynman--Kac formula
\[
\psi_k(q)=  \E_q\left[ \int_0^{\sigma_k} \rme^{-s}\, \phi(q_{s}) \, ds\right],
\]
where $\sigma_k= \inf\{ t\geq 0\,: \, q_t\notin \mathscr{O}_k\}$.

We next claim that if $\psi$ is defined by the formula~\eqref{formula:psi}, then $\psi_k\rightarrow \psi$ pointwise in~$\mathscr{O}$ as $k\rightarrow \infty$. Let $q\in \mathscr{O}$ and consider $k_* \in \mathbb{N}$ such that $q\in \mathscr{O}_k$ for all $k\geq k_*$ (such an integer~$k_*$ exists by Assumption~\ref{assump:basic}).  It then follows that, for all $k\geq k_*$,
\begin{align}
\label{eqn:pointwiseconvb}
| \psi(q)- \psi_k(q) | = \left|\E_q \left[\int_{\sigma_k}^\infty \rme^{-s} \phi(q_s) \, ds\right] \right| \leq \|\phi\|_\infty \, \E_q \left( \rme^{-\sigma_k} \right).  
\end{align} 
Since $q_t$ is a non-explosive process, $\sigma_k\uparrow \infty$, $\PP_q$-almost surely as $k\rightarrow \infty$.  Using the dominated convergence theorem on the right hand side of~\eqref{eqn:pointwiseconvb}, it follows that $\psi_k(q)\rightarrow \psi(q)$ as $k\rightarrow \infty$, thus establishing the claimed pointwise convergence.

The next step is to show that $\psi$ solves~\eqref{eqn:ODpsiphi} in the sense of distributions on~$\mathscr{O}$. To see this, let $k_*\in \N$ be arbitrary and suppose that $\varphi \in C_\mathrm{c}^\infty(\mathscr{O}_{k_*})$.  Note that $\psi_k$ and $\psi$ are both globally bounded by $\| \phi \|_\infty$.  Letting $(1-L_\mathrm{OD})^{\dagger} = 1-\Delta_q - \mathrm{div}_q\left(\nabla U \cdot\right)$ denote the formal adjoint of $(1-L_\mathrm{OD})$ with respect to the Lebesgue measure on~$\mathscr{O}$ and recalling that $\psi_k \rightarrow \psi$ pointwise on~$\mathscr{O}$, we thus obtain by the dominated convergence theorem again:
\begin{align*}
  \int_\mathscr{O} \psi (1-L_\mathrm{OD})^{\dagger} \varphi & = \lim_{k\rightarrow \infty }\int_{\mathscr{O}_{k_*}} \psi_k (1-L_\mathrm{OD})^{\dagger}\varphi \\
  & = \lim_{k\rightarrow \infty }\int_{\mathscr{O}_{k_*}} \left[(1-L_\mathrm{OD})\psi_k\right] \varphi = \int_{\mathscr{O}_{k_*}} \phi\varphi.
\end{align*}
This shows indeed that $\psi$ solves~\eqref{eqn:ODpsiphi} in the sense of distributions. By results of elliptic regularity, this functions also solves~\eqref{eqn:ODpsiphi} in the classical sense on~$\mathscr{O}$.

Finally, to establish the uniqueness of solutions of~\eqref{eqn:ODpsiphi} belonging to $C^\infty_\mathrm{b}(\mathscr{O})$, let $\psi_1, \psi_2 \in C_\mathrm{b}^\infty(\mathscr{O})$ satisfy~\eqref{eqn:ODpsiphi} and fix~$q\in \mathscr{O}$. Applying It\^{o}'s formula to $\rme^{-t} (\psi_1(q_t) -\psi_2(q_t))$ and evaluating this quantity at time~$t\wedge \sigma_k$, we find that, for all $ t\geq 0, \, k\in \N,$ and $i=1,2$,
\begin{align*}
  \qquad \E_q \left[\rme^{-t\wedge \sigma_k} \psi_i(q_{t\wedge \sigma_k})\right] &= \psi_i(q) + \E_q\left[ \int_0^{t\wedge \sigma_k} (L_{\mathrm{OD}} -1) \psi_i(q_s) \, ds\right] \\
  &= \psi_i(q) - \E_q \left[ \int_0^{t\wedge \sigma_k}  \phi(q_s)\, ds\right].
\end{align*}
Therefore, 
\begin{align*}
  \forall t\geq 0, \quad \forall k \in \N,\qquad \psi_1(q)-\psi_2(q) =  \E_q \left[ \rme^{-t\wedge \sigma_k}\left( \psi_1(q_{t\wedge \sigma_k})- \psi_2(q_{t\wedge \sigma_k}) \right)\right] .
\end{align*}
Hence, we can bound the difference above as 
\begin{align*}
|\psi_1(q)-\psi_2(q)| \leq \left( \|\psi_1 \|_\infty + \| \psi_2 \|_\infty \right) \E_q \left[ \rme^{-t\wedge \sigma_k} \right].  
\end{align*}  
Taking $t\rightarrow \infty$ and then $k\rightarrow \infty$, and using the nonexplosivity of the dynamics, we find that $\psi_1(q)=\psi_2(q)$.  Since $q\in \mathscr{O}$ was arbitrary, we conclude that $\psi_1=\psi_2$ on all of~$\mathscr{O}$.
\end{proof}

\begin{Remark}
\label{rem:Adef}
Note that the proof of the previous result gives a natural way to define the action of the operator 
\begin{align}
A=(1-L_{\rm OD})^{-1} \Pi L_{\rm H}
\end{align}
on the core of functions $\phi \in C^\infty(\X)$ such that $\Pi L_{\rm H} \phi \in C^\infty_\mathrm{b}(\mathscr{O})$, via the stochastic representation 
\begin{align}
  \label{eqn:FK}
  \left(A \phi\right)(q) := \E_q\left[ \int_0^\infty \rme^{-s} (\Pi L_{\rm H}\phi)(q_{s}) \, ds\right],  
\end{align}
where $q_t$ satisfies~\eqref{eqn:gradient}. In particular, $A\phi \in C_\mathrm{b}^\infty(\mathscr{O})$. Observe that the vector space of such functions~$\phi$ is dense in $L^2(d\mu)$ since it contains $C_\mathrm{c}^\infty(\X)$ as well as the constant functions, hence also $C_{\mathrm{c},0}^\infty(\X)$. The operator $A$ above can then be extended to an operator on $L^2(d\mu)$ once we show it is bounded on its dense domain; see Lemma~\ref{lem:standard} below for the latter result.
\end{Remark}

We next establish some moments estimates for~$|\nabla U|$ with respect to the measure~$\mu_{\mathrm{OD}}$, which will prove crucial to control derivatives of~$\chi_n$. Such estimates also allow us to obtain some control on the derivatives of the solution~$\psi\in C_\mathrm{b}^\infty(\mathscr{O})$ to~\eqref{eqn:ODpsiphi}. Part~(ii) of the lemma below is used crucially in the final line of the proof of Proposition~\ref{P:NablaSquared}.

\begin{Lemma}
  \label{lem:diff}
  Suppose that $U$ satisfies Assumptions~\ref{assump:basic} and~\ref{assump:gc}. Then, 
  \begin{itemize}
  \item[(i)] $|\nabla U|\in L^r(d\mu_{\mathrm{OD}})$ for any $r\geq 1$;
  \item[(ii)] for any $\phi \in C_\mathrm{b}^\infty(\mathscr{O})$ and $r\geq 1$, the unique solution $\psi \in C^\infty_\mathrm{b}(\mathscr{O})$ to~\eqref{eqn:ODpsiphi} belongs to $H^1(d\mu_{\mathrm{OD}})\cap H^1(|\nabla U|^r \, d\mu_{\mathrm{OD}})$.
  \item[(iii)]  for any $\delta >0$, $\rme^{-\delta U} \in L^1(dq)$.  
  \end{itemize}
\end{Lemma}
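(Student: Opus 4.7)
For part~(i), the plan is to apply the divergence theorem to $\nabla U \cdot |\nabla U|^{2(k-1)} \chi_n(U) \rme^{-U}$, where $\chi_n$ is a smooth cutoff supported in $\mathscr{O}_{n+1}$, and induct on $k \geq 1$. The base case $k=1$ yields
\[
(1-c_1) \int_\mathscr{O} |\nabla U|^2 \chi_n \rme^{-U} \, dq \leq C_2 \mathscr{N}_{\mathrm{OD}}
\]
after using $\Delta U \leq c_1 |\nabla U|^2 + C_2$ from~\eqref{eqn:c1C2} and the sign condition $\chi_n' \leq 0$. For the inductive step, the new cross contribution $\nabla U \cdot \nabla(|\nabla U|^{2(k-1)}) = 2(k-1) |\nabla U|^{2(k-2)} (\nabla U)^T \nabla^2 U \, \nabla U$ will be bounded via Assumption~\ref{assump:gc} by $2(k-1)(\varepsilon |\nabla U|^{2k} + C_\varepsilon |\nabla U|^{2(k-1)})$, with $\varepsilon$ chosen small enough (relative to $1-c_1$) to absorb the $|\nabla U|^{2k}$ piece on the left. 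The limit $n \to \infty$ will then be taken via Fatou's lemma, and non-integer exponents will follow by H\"older's inequality.

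For part~(iii), I would rerun the same divergence identity with $\rme^{-\delta U}$ in place of $\rme^{-U}$, producing
\[
(\delta-c_1) \int |\nabla U|^2 \chi_n \rme^{-\delta U} \, dq \leq C_2 \int \chi_n \rme^{-\delta U} \, dq,
\]
valid as soon as $c_1 < \delta$; the latter is attainable by shrinking $\varepsilon$ in Assumption~\ref{assump:gc}, since one may take $c_1 = d\varepsilon$. Assumption~\ref{assump:basic}(iv) then supplies a level $K > 0$ above which $|\nabla U|^2 \geq M$ for any prescribed $M$, so the left-hand side above dominates $(\delta-c_1) M \int_{\{U>K\}} \chi_n \rme^{-\delta U} \, dq$. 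Splitting the right-hand side according to $\{U \leq K\}$ (volume finite by Assumption~\ref{assump:basic}(ii), with $\rme^{-\delta U} \leq 1$ there) and $\{U > K\}$, and choosing $M > C_2/(\delta-c_1)$, produces a uniform-in-$n$ bound on $\int_{\{U > K\}} \chi_n \rme^{-\delta U} \, dq$. Monotone convergence as $n\to\infty$ then delivers $\rme^{-\delta U} \in L^1(dq)$.

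For part~(ii), the $L^2$-bounds $\psi \in L^2(d\mu_{\mathrm{OD}}) \cap L^2(|\nabla U|^r d\mu_{\mathrm{OD}})$ are immediate from $\psi \in C^\infty_\mathrm{b}(\mathscr{O})$ together with part~(i). For the unweighted gradient control, testing the resolvent equation $(1-L_{\mathrm{OD}})\psi = \phi$ against $\psi$ in $L^2(d\mu_{\mathrm{OD}})$ and integrating by parts yields $\int \psi^2 \, d\mu_{\mathrm{OD}} + \int |\nabla\psi|^2 \, d\mu_{\mathrm{OD}} = \int \phi \psi \, d\mu_{\mathrm{OD}} \leq \|\phi\|_\infty \|\psi\|_\infty < \infty$. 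For the weighted version, the plan is to test against $\psi |\nabla U|^r \chi_n(U)$ and integrate by parts, producing an identity of the form
\[
\int (\psi^2 + |\nabla \psi|^2) |\nabla U|^r \chi_n \, d\mu_{\mathrm{OD}} + \mathcal{R}_n = \int \phi \psi \, |\nabla U|^r \chi_n \, d\mu_{\mathrm{OD}},
\]
where $\mathcal{R}_n$ collects cross terms of the shape $\int \psi \nabla \psi \cdot \nabla(|\nabla U|^r) \chi_n \, d\mu_{\mathrm{OD}}$ and $\chi_n'$-contributions. Using Assumption~\ref{assump:gc} to bound $|\nabla(|\nabla U|^r)| \leq r|\nabla U|^{r-1}(\varepsilon |\nabla U|^2 + C_\varepsilon)$ and Young's inequality with $\varepsilon$ small enough, a fraction of $\int |\nabla \psi|^2 |\nabla U|^r \chi_n \, d\mu_{\mathrm{OD}}$ will be absorbed into the left-hand side, with residuals controlled by $\int |\nabla U|^{r+2} \, d\mu_{\mathrm{OD}}$, finite by part~(i); the $\chi_n'$-contributions, supported on $\{n \leq U \leq n+1\}$, vanish as $n \to \infty$ by Cauchy--Schwarz together with the same moment bounds.

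The main technical hurdle throughout will be arranging the integration-by-parts identities so that no stray boundary integral on $\partial \mathscr{O}$ or at infinity is left behind; this is handled uniformly by the cutoff $\chi_n(U)$, which confines every calculation to the compact set $\overline{\mathscr{O}_{n+1}}$, with the subsequent passage $n \to \infty$ justified by monotone or dominated convergence combined with the moment estimates of part~(i). A related subtlety is that $\varepsilon$ in Assumption~\ref{assump:gc} must be tuned separately for each moment in~(i), each $\delta$ in~(iii), and each Young absorption in~(ii), with $C_\varepsilon$ allowed to diverge accordingly --- exactly the structural flexibility that distinguishes Assumption~\ref{assump:gc} from the stronger bound~\eqref{eqn:strongerbound}.
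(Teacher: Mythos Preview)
Your proposal is correct. The structure for parts~(i) and~(ii) matches the paper's closely: the same cutoff $\chi_n(U)$, the same test functions, and the same passage to the limit. Two differences are worth noting. For part~(i), the paper reaches the same recursion by applying the full Langevin generator $L$ to $p\cdot\nabla U\,|\nabla U|^{r-2}\chi_n(U)$ and integrating against the product measure $\mu$ (the Gaussian integration in $p$ produces the divergence structure), then closes directly for each $r\geq 3$ via a single H\"older step rather than induction; your divergence-theorem induction on $k$ is the $q$-only version of the same computation and is arguably more transparent. For part~(iii), your argument is genuinely more elementary: the paper instead shows that $\rme^{\eta U}\in L^1(d\mu_{\mathrm{OD}})$ for $\eta\in(0,1)$ by verifying that $L_{\mathrm{OD}}(\rme^{\eta U})\to-\infty$ as $U\to\infty$ and invoking an external Lyapunov-to-integrability result (Hairer--Mattingly). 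Your direct energy-plus-level-set argument avoids the external reference and stays entirely within the framework of Assumptions~\ref{assump:basic} and~\ref{assump:gc}. For part~(ii), the paper bounds the cross term $\int\psi\nabla\psi\cdot\nabla(|\nabla U|^r)\chi_n\,d\mu_{\mathrm{OD}}$ by Cauchy--Schwarz against the \emph{unweighted} $\|\nabla\psi\|$ already obtained, rather than by your Young absorption into the weighted term; both routes close equally well. One small point: in your unweighted $H^1$ step you should also run the cutoff $\chi_n(U)$ explicitly (as you do in the weighted step), since you do not yet know $\nabla\psi\in L^2$ and cannot integrate by parts globally without it.
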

\begin{proof}

  To prove~(i), fix $r\geq 3$ and consider the sequence of functions 
  \[
  \varphi_n(q,p) = p \cdot \nabla U |\nabla U|^{r-2} \chi_n(U),  
  \]
 where $\chi_n$ is defined in~\eqref{eqn:cutoffp}. A simple computation shows that 
 \begin{align*}
   L \varphi_n(q,p) = \sum_{i,j=1}^d p_i p_j \partial_{q_j}(\partial_{q_i}U |\nabla U|^{r-2} \chi_n(U)) - |\nabla U|^{r} \chi_n(U) - \gamma\varphi_n(q,p).  
 \end{align*}
 Note that the function $L \varphi_n$ belongs to~$L^1(\mu)$ since~$\chi_n(U)$ has compact support. Now, since the integral of $p_ip_j$ against~$\mu_{\mathrm{OD}}$ is~$\delta_{ij}$ because $\mu_\mathrm{OD}$ is a standard centered Gaussian, while~$\varphi_n$ has average~0 with respect to~$\mu$, 
 \begin{align*}
 0 = \int_{\X} L \varphi_n \, d\mu = \int_\mathscr{O} \sum_{j=1}^d  \partial_{q_j} \left[ \partial_{q_j} U |  \nabla U|^{r-2} \chi_n(U)\right]  - |\nabla U|^{r} \chi_n(U) \, d\mu_{\mathrm{OD}}.  
 \end{align*}
 Since $\chi_n' \leq 0$ and in view of Assumption~\ref{assump:gc}, there exists $C >0$ such that  
 \begin{align*}
   \sum_{j=1}^d \partial_{q_j} \left[ \partial_{q_j} U |  \nabla U|^{r-2} \chi_n(U)\right] \leq \left(\frac{1}{2} |\nabla U|^{r} + C |\nabla U|^{r-2} \right) \chi_n(U).  
 \end{align*}
 Combining the previous two estimates we find that 
 \begin{align*}
   \forall n \geq 1, \qquad \frac12 \int_\mathscr{O} |\nabla U|^{r} \chi_n(U)  \, d\mu_{\mathrm{OD}} \leq C \int_\mathscr{O} |\nabla U|^{r-2} \chi_n(U)  \, d\mu_{\mathrm{OD}}.
  \end{align*}
 In view of the following lower bound, obtained by a H\"older inequality,
 \[
 \int_\mathscr{O} |\nabla U|^{r} \chi_n(U)  \, d\mu_{\mathrm{OD}} \geq \left(\int_\mathscr{O} |\nabla U|^{r-2} \chi_n(U)  \, d\mu_{\mathrm{OD}}\right)^{r/(r-2)}\left(\int_\mathscr{O} \chi_n(U)  \, d\mu_{\mathrm{OD}}\right)^{-2/(r-2)},
 \]
 we find that
 \[
 \left(\int_\mathscr{O} |\nabla U|^{r-2} \chi_n(U)  \, d\mu_{\mathrm{OD}}\right)^{2/(r-2)} \leq 2C \left(\int_\mathscr{O} \chi_n(U)  \, d\mu_{\mathrm{OD}}\right)^{2/(r-2)} \leq 2C,
 \]
 from which the result (i) follows. 

 For part~(ii), let us fix $\phi \in C_\mathrm{b}^\infty(\mathscr{O})$ and consider the unique solution $\psi \in C^\infty_\mathrm{b}(\mathscr{O})$ to~\eqref{eqn:ODpsiphi}. In the remainder of this proof, because all functions are functions of $q$ only, we use $\| \cdot \|$ to denote the canonical norm on $L^2(d\mu_{\mathrm{OD}})$. We first show that $\psi \in H^1(d\mu_{\mathrm{OD}})$. A Cauchy--Schwarz inequality and integration by parts lead to
 \begin{align*}
\| \psi \| \, \| \phi \| &\geq \int_\mathscr{O} \phi \psi \chi_n(U) \, d\mu_{\rm OD} = \int_\mathscr{O} (1- L_\mathrm{OD}) \psi  \cdot \psi  \chi_n(U)\, d\mu_{\mathrm{OD}}\\
& = \int_\mathscr{O} \psi^2 \chi_n(U) \,  d\mu_{\mathrm{OD}}  + \int_\mathscr{O} \nabla_q \psi \cdot \nabla_q \left( \psi \chi_n(U) \right) d\mu_{\mathrm{OD}}\\
& = \int_\mathscr{O} \psi^2 \chi_n(U) \,  d\mu_{\mathrm{OD}}  + \int_\mathscr{O} |\nabla_q \psi|^2 \chi_n(U) \, d\mu_{\mathrm{OD}} - \frac12 \int_\mathscr{O} \psi^2 L_{\mathrm{OD}}\left( \chi_n(U) \right) d\mu_{\mathrm{OD}}.
\end{align*}
The last integral on the right hand side converges to~0 as $n\to+\infty$ by a dominated convergence argument, since $\psi$ is bounded, while $L_{\mathrm{OD}} (\chi_n(U)) = \chi_n''(U)|\nabla U|^2 + \chi_n'(U)(\Delta U-|\nabla U|^2)$ converges pointwise to~0 and $|\nabla U|^2,\Delta U \in L^1(d\mu_{\rm OD})$ by Assumption~\ref{assump:gc} and part~(i). We therefore obtain the bound $\| \psi \|^2 + \|\nabla_q\psi\|^2 \leq \| \psi \| \|\phi\|$ as $n\to+\infty$, which proves that $\psi \in H^1(d\mu_{\mathrm{OD}})$.  

To see that $\psi \in H^1(|\nabla U|^r \, d\mu_{\mathrm{OD}})$ for any $r\geq 1$, we may assume without loss of generality that $r\geq 3$. By manipulations similar to the ones used to prove the $H^1(\mu_\mathrm{OD})$ bound, we obtain
\begin{align*}
& \left\| \psi |\nabla U|^r \right\|\| \phi \| \geq \int_\mathscr{O} (1- L_\mathrm{OD}) \psi  \cdot \psi |\nabla U|^r \chi_n(U)\, d\mu_{\mathrm{OD}}\\
& \ \ = \int_\mathscr{O} \psi^2 \chi_n(U) \, |\nabla U|^r d\mu_{\mathrm{OD}}  + \int_\mathscr{O} |\nabla_q\psi |^2 |\nabla U|^r \chi_n(U) \, d\mu_{\mathrm{OD}} + \int_\mathscr{O} \psi \nabla_q \psi \cdot \nabla\left[|\nabla U|^r \chi_n(U)\right] d\mu_{\mathrm{OD}}. 
\end{align*}
The last integral can be bounded as
\[
\begin{aligned}
  & \left| \int_\mathscr{O} \psi \nabla_q \psi \cdot \nabla\left[|\nabla U|^r \chi_n(U)\right] d\mu_{\mathrm{OD}} \right| \\
  & \qquad \leq \left|\int_\mathscr{O} \psi \nabla_q \psi \cdot \nabla\left[|\nabla U|^r\right] \chi_n(U)\,d\mu_{\mathrm{OD}} \right| + \left|\int_\mathscr{O} \psi \nabla_q \psi \cdot \nabla U |\nabla U|^r \chi'_n(U)\,d\mu_{\mathrm{OD}}\right| \\
  & \qquad \leq \|\nabla_q \psi\|\left( \left\| \psi \nabla\left[|\nabla U|^r\right] \right\| +  \left\|\psi \nabla U |\nabla U|^r \chi'_n(U)\right\|\right).
\end{aligned}
\]
By part~(i) and Assumption~\ref{assump:gc}, the last term in the above inequality converges to~0 as $n\to+\infty$, while the other term is bounded. We therefore obtain, in the limit $n \to +\infty$,
\begin{align*}
  \| \psi \|^2_{L^2(|\nabla U|^r \, d\mu_{\mathrm{OD}})}+ \| \nabla_q \psi \|^2_{L^2(|\nabla U|^r \, d\mu_{\mathrm{OD}})} & \leq \| \psi |\nabla U|^r \|\| \phi \| + \|\nabla_q \psi\| \left\| \psi \nabla\left(|\nabla U|^r\right) \right\| \\
  & \leq \|\psi\|_\infty \left( \| |\nabla U|^r \| \| \phi \| + \|\nabla_q \psi\| \left\| \nabla\left(|\nabla U|^r\right) \right\| \right) < +\infty,
\end{align*}
which concludes the proof of part (ii).  

In order to deduce part (iii) of the result, it is enough to show that $\mathrm{e}^{\eta U} \in L^1(d\mu_{\mathrm{OD}})$ for any $\eta \in (0,1)$.  We do this by applying~\cite[Proposition~5.1]{HairMat_09} and showing that, for any $\eta \in (0,1)$, it holds $L_{\mathrm{OD}} (\mathrm{e}^{\eta U}) \rightarrow -\infty$ as $U\rightarrow \infty$. Fixing $\eta \in (0,1)$, a direct calculation shows that
\begin{align*}
\frac{L_\mathrm{OD} \left(\mathrm{e}^{\eta U}\right)}{\eta\mathrm{e}^{\eta U}} = -(1-\eta) |\nabla U|^2 + \Delta U,  
\end{align*}
the function on the right hand side going to~$-\infty$ as $U(q) \to +\infty$ by Assumptions~\ref{assump:basic}(iv) and~\ref{assump:gc}. 
\end{proof}

\subsection{$L^2$-bounds and the elliptic regularity estimate}
\label{sec:elliptic_reg}

We deduce here various $L^2$ estimates involving the operator $A$, as well as an elliptic regularity estimate similar to the one in~\cite{dolbeault2015hypocoercivity}, from the estimates proved in Section~\ref{sec:estimates_tech}.  

\begin{Lemma}
  \label{lem:standard}
  Suppose that $U$ satisfies Assumptions~\ref{assump:basic} and~\ref{assump:gc}. Then, for any $\phi \in C_\mathrm{c}^\infty(\X)$,
  \begin{align}
  \| A \phi \| & \leq \frac{1}{2}\| (1-\Pi)\phi \|, \label{eq:ineq_A}\\
  |\langle L^*A \phi, \phi\rangle| & \leq\|(1-\Pi)\phi\|^2,\label{eq:ineq_LA}\\
  |\langle A L_{\mathrm{OU}} \phi, \phi \rangle| & \leq \frac{1}{2} \| (1-\Pi) \phi\| \| \Pi\phi \|. \label{eq:ineq_ALOU}
  \end{align}
\end{Lemma}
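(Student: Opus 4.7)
The plan is to recast the operator $A$ in a form amenable to spectral analysis on $L^2(d\mu_\mathrm{OD})$, and then deduce the three bounds in turn. For $\phi \in C_\mathrm{c}^\infty(\X)$, introduce the vector field $v := \Pi(p\phi)$ on $\mathscr{O}$. Integration by parts against the $p$-Gaussian gives $\Pi(\partial_{p_i}\phi) = \Pi(p_i\phi) = v_i$, and a direct calculation then yields
\[
\Pi L_\mathrm{H}\phi \;=\; \mathrm{div}_q v - \nabla U \cdot v \;=\; -\nabla_q^*\cdot v,
\]
where $\nabla_q^* u = -\mathrm{div}_q u + \nabla U\cdot u$ is the $L^2(d\mu_\mathrm{OD})$-adjoint of $\nabla_q$. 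By Remark~\ref{rem:Adef} and Proposition~\ref{prop:solutions}, $u := A\phi \in C_\mathrm{b}^\infty(\mathscr{O})$ is then the unique solution of $(1 + \nabla_q^*\nabla_q)u = -\nabla_q^*\cdot v$. Moreover, since $\Pi(p_i\phi) = \Pi(p_i(1-\Pi)\phi)$, Bessel's inequality applied to the orthonormal family $\{p_i\}_{i=1}^d \subset L^2(d\mu_\mathrm{OU})$ gives the pointwise bound $|v(q)|^2 \leq \int |(1-\Pi)\phi(q,p)|^2\,d\mu_\mathrm{OU}(p)$, whence
\[
\|v\|_{L^2(d\mu_\mathrm{OD})} \leq \|(1-\Pi)\phi\|.
\]

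To establish~\eqref{eq:ineq_A}, I set $B := \nabla_q$ and use the identity $u = -B^*(1+BB^*)^{-1}v$. Spectral calculus on the nonnegative self-adjoint operator $C := BB^*$ yields
\[
\|u\|^2 = \langle C(1+C)^{-2}v,\,v\rangle \leq \left(\sup_{\lambda\geq 0}\frac{\lambda}{(1+\lambda)^2}\right)\|v\|^2 = \frac{1}{4}\|v\|^2,
\]
which, combined with the bound on $\|v\|$, gives $\|A\phi\|\leq \tfrac12\|(1-\Pi)\phi\|$. For~\eqref{eq:ineq_LA}, I will exploit that $A\phi$ is a function of $q$ only, so $L_\mathrm{OU}(A\phi)=0$ and hence $L^*A\phi = -L_\mathrm{H} A\phi = -p\cdot\nabla_q(A\phi)$. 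Integration against $d\mu_\mathrm{OU}$ first then produces
\[
\langle L^*A\phi,\phi\rangle = -\int_\mathscr{O}\nabla_q(A\phi)\cdot v\,d\mu_\mathrm{OD},
\]
and Cauchy--Schwarz together with the companion spectral identity $\|\nabla_q u\|^2 = \langle C^2(1+C)^{-2}v,v\rangle \leq \|v\|^2$ (using $\sup_\lambda \lambda^2/(1+\lambda)^2 \leq 1$) delivers the claimed bound after again invoking $\|v\|\leq \|(1-\Pi)\phi\|$.

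Finally, for~\eqref{eq:ineq_ALOU}, the key observation is the eigenrelation $L_\mathrm{OU} p_i = -p_i$: self-adjointness of $L_\mathrm{OU}$ on $L^2(d\mu_\mathrm{OU})$ then gives $\Pi(p_i L_\mathrm{OU}\phi) = -\Pi(p_i\phi)$, so $\Pi L_\mathrm{H} L_\mathrm{OU}\phi = -\Pi L_\mathrm{H}\phi$ and consequently $AL_\mathrm{OU}\phi = -A\phi$. Since $A\phi$ depends only on $q$, $\langle AL_\mathrm{OU}\phi,\phi\rangle = -\langle A\phi,\Pi\phi\rangle$, and Cauchy--Schwarz combined with~\eqref{eq:ineq_A} finishes the proof. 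The main technical difficulty throughout will be justifying rigorously the formal integrations by parts in $p$ and the spectral calculus applied to $\nabla_q^*\nabla_q$ on test functions, but this should be handled via the regularity result of Proposition~\ref{prop:solutions} together with the cutoff family $\{\chi_n\}$ introduced at the start of Section~\ref{sec:auxest}.
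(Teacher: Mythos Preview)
Your argument is correct and constitutes a genuinely different route from the paper's proof. The paper works directly with the weak formulation~$\|\psi\|_n^2 + \langle (L_\mathrm{H}\Pi)^*L_\mathrm{H}\Pi\psi,\psi\rangle_n = -\langle (L_\mathrm{H}\Pi)^*\phi,\psi\rangle_n$, introduces the cutoffs~$\chi_n(H)$ explicitly, tracks the error terms~$T_2(n),T_4(n)$ arising from commutators with~$\chi_n$, and closes the estimates via Young's inequality; a second pass with a different Young splitting then yields the auxiliary bound~$\|L_\mathrm{H}\Pi A\phi\|\leq \|(1-\Pi)\phi\|$ needed for~\eqref{eq:ineq_LA}. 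Your approach instead identifies the vector field~$v=\Pi(p\phi)$, writes~$A\phi=-B^*(1+BB^*)^{-1}v$ with~$B=\nabla_q$, and reads off both~\eqref{eq:ineq_A} and~\eqref{eq:ineq_LA} from the elementary spectral bounds~$\lambda(1+\lambda)^{-2}\leq 1/4$ and~$\lambda^2(1+\lambda)^{-2}\leq 1$ for~$C=BB^*$; the Bessel inequality~$\|v\|\leq\|(1-\Pi)\phi\|$ replaces the paper's use of~$\Pi L_\mathrm{H}\Pi=0$. For~\eqref{eq:ineq_ALOU} both proofs are identical. Your route is cleaner and makes the structure transparent, at the cost of invoking von Neumann's theorem for the self-adjointness of~$BB^*$ on~$L^2(d\mu_\mathrm{OD})^d$ and checking that the abstract resolvent solution agrees with the Feynman--Kac solution of Proposition~\ref{prop:solutions} (this follows since~$u\in H^1(d\mu_\mathrm{OD})$ by Lemma~\ref{lem:diff}(ii) and~$L_\mathrm{OD}u\in L^2$, placing~$u$ in the domain of~$B^*B$). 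The paper's approach is more self-contained and reuses the cutoff machinery already needed elsewhere in Section~\ref{sec:auxest}. Two minor remarks: the integrations by parts in~$p$ require no justification beyond~$\phi\in C_\mathrm{c}^\infty(\X)$, and in your closing sentence you write~$\nabla_q^*\nabla_q$ where you mean~$\nabla_q\nabla_q^*=C$.
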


The extension of the same inequalities to functions in $C_{\mathrm{c},0}^\infty(\X)$ (Corollary~\ref{cor:C_{c,0}} below) is immediate since functions in this space are a constant shift of functions in $C_\mathrm{c}^\infty(\X)$, and $A c=L_\mathrm{OU}c=Lc=0$ for any constant function~$c$. 
\begin{Corollary}
\label{cor:C_{c,0}}
Suppose that $U$ satisfies Assumptions~\ref{assump:basic} and~\ref{assump:gc}.  Then for any $\phi \in C_{\mathrm{c},0}^\infty(\X)$ the estimates~\eqref{eq:ineq_A}-\eqref{eq:ineq_LA}-\eqref{eq:ineq_ALOU} are also satisfied. 
\end{Corollary}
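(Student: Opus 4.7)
The guiding observation is that, for any $\phi\in C_\mathrm{c}^\infty(\X)$, the function $g:=A\phi$ is by Remark~\ref{rem:Adef} the unique $C_\mathrm{b}^\infty(\mathscr{O})$ solution of $(1-L_\mathrm{OD})g=\Pi L_\mathrm{H}\phi$. I plan to first recast the source in divergence form: a single integration by parts in~$p$ against $\mathrm{e}^{-|p|^2/2}$ gives $\Pi(\nabla_p\phi)=\Pi(p\phi)$, and since $\Pi$ commutes with $\nabla_q$, setting $F(q):=\Pi(p\phi)(q)$ yields
\[
\Pi L_\mathrm{H}\phi \;=\; \nabla_q\!\cdot\!F \;-\; \nabla U\!\cdot\!F \;=\; -\nabla_q^*F,
\]
where $\nabla_q^*=-\mathrm{div}_q+\nabla U\!\cdot\,$ is the $L^2(d\mu_\mathrm{OD})$-adjoint of $\nabla_q$. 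Thus $g+\nabla_q^*\nabla_q g=-\nabla_q^*F$, which serves as the starting point for all three estimates.

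For~\eqref{eq:ineq_A}, I would test this equation against~$g$ to obtain the energy identity $\|g\|^2+\|\nabla_q g\|^2=-\langle F,\nabla_q g\rangle_{L^2(d\mu_\mathrm{OD})}\leq\|F\|\,\|\nabla_q g\|$. Reading this as a quadratic inequality in $\|\nabla_q g\|$ yields both $\|g\|\leq\|F\|/2$ and $\|\nabla_q g\|\leq\|F\|$, both of which will be needed subsequently. It remains to compare $\|F\|$ with $\|(1-\Pi)\phi\|$: since $\Pi(p)=0$ we have $F=\Pi(p(1-\Pi)\phi)$, and the adjoint of the bounded operator $\phi\mapsto\Pi(p\phi):L^2(d\mu)\to L^2(d\mu_\mathrm{OD};\R^d)$ is the vector multiplication $G\mapsto p\!\cdot\!G$, whose $L^2(d\mu)$-norm squared equals $\|G\|^2_{L^2(d\mu_\mathrm{OD};\R^d)}$ thanks to $\int p_ip_j\,d\mu_\mathrm{OU}=\delta_{ij}$. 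Therefore $\|F\|\leq\|(1-\Pi)\phi\|$, and~\eqref{eq:ineq_A} follows.

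For~\eqref{eq:ineq_LA}, I would exploit that $g$ depends only on~$q$, so $L_\mathrm{OU}g=0$ and $L^*A\phi=-L_\mathrm{H}g=-p\!\cdot\!\nabla_q g$. Since the Gaussian $p$-average of $p\!\cdot\!\nabla_q g$ vanishes, only the $(1-\Pi)$-component of~$\phi$ contributes, and Cauchy--Schwarz in $L^2(d\mu)$ combined with the identity $\|p\!\cdot\!\nabla_q g\|_{L^2(d\mu)}=\|\nabla_q g\|_{L^2(d\mu_\mathrm{OD})}$ (again using $\int p_ip_j\,d\mu_\mathrm{OU}=\delta_{ij}$) and the previously obtained $\|\nabla_q g\|\leq\|F\|\leq\|(1-\Pi)\phi\|$ gives the claim. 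For~\eqref{eq:ineq_ALOU}, the cleanest route is the algebraic identity $\Pi L_\mathrm{H}L_\mathrm{OU}=-\Pi L_\mathrm{H}$, which I would prove by duality: for $f=f(q)$,
\[
\langle L_\mathrm{H}L_\mathrm{OU}\phi,f\rangle = -\langle L_\mathrm{OU}\phi,p\!\cdot\!\nabla_q f\rangle = -\langle\phi,L_\mathrm{OU}(p\!\cdot\!\nabla_q f)\rangle = \langle\phi,p\!\cdot\!\nabla_q f\rangle = -\langle L_\mathrm{H}\phi,f\rangle,
\]
where I used $L_\mathrm{OU}p_i=-p_i$ to observe that $L_\mathrm{OU}$ multiplies $p$-linear functions by $-1$. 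Consequently $AL_\mathrm{OU}\phi=-A\phi$, and because $A\phi\in\mathrm{ran}\,\Pi$ is orthogonal to the range of $1-\Pi$, one has $\langle AL_\mathrm{OU}\phi,\phi\rangle=-\langle A\phi,\Pi\phi\rangle$, whence Cauchy--Schwarz and~\eqref{eq:ineq_A} give~\eqref{eq:ineq_ALOU}.

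The main technical obstacle will be rigorously justifying the energy identity in the second step, since $\mathscr{O}$ is non-compact and possibly singular, so one cannot directly integrate by parts against $d\mu_\mathrm{OD}$. I plan to handle this via the cutoff sequence $\chi_n(U)$ from~\eqref{eqn:cutoffp}, inserting $\chi_n(U)$ as a localising factor and invoking the moment bounds of Lemma~\ref{lem:diff}(i)--(ii) to show that the boundary contributions (which involve products of powers of $|\nabla U|$ with $\chi_n'(U)$ and $\chi_n''(U)$) vanish as $n\to\infty$. Once this truncation argument is implemented, the remaining computations above reduce to algebraic manipulations on bounded smooth functions.
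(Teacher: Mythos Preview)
Your proposal is a correct, self-contained derivation of the three inequalities, but it is aimed at the wrong target. The corollary is not asking you to establish \eqref{eq:ineq_A}--\eqref{eq:ineq_ALOU} from scratch: those have already been proved in Lemma~\ref{lem:standard} for $\phi\in C_\mathrm{c}^\infty(\X)$. What the corollary asks is to pass from $C_\mathrm{c}^\infty(\X)$ to $C_{\mathrm{c},0}^\infty(\X)$, and the paper's argument is a single sentence: any $\phi\in C_{\mathrm{c},0}^\infty(\X)$ has the form $\psi-c$ with $\psi\in C_\mathrm{c}^\infty(\X)$ and $c$ constant, and since $Ac=L_{\mathrm{OU}}c=Lc=(1-\Pi)c=0$, every quantity appearing in the three estimates (and in their proofs) is unchanged under this shift. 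Your opening line ``for any $\phi\in C_\mathrm{c}^\infty(\X)$'' is therefore the hypothesis of the lemma, not of the corollary; you never actually make the move to $C_{\mathrm{c},0}^\infty(\X)$.

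That said, under a charitable reading your argument does apply to $\phi\in C_{\mathrm{c},0}^\infty(\X)$ as well, precisely because every object you manipulate ($\Pi L_\mathrm{H}\phi$, $F=\Pi(p\phi)$, $(1-\Pi)\phi$, $L_\mathrm{OU}\phi$) is invariant under adding a constant to~$\phi$. But this invariance---which is the entire point of the corollary---is never stated in your write-up. What you have effectively done is give an alternative, and in several respects cleaner, proof of Lemma~\ref{lem:standard} itself: the divergence identity $\Pi L_\mathrm{H}\phi=-\nabla_q^*F$ followed by the energy estimate $\|g\|^2+\|\nabla_q g\|^2\leq\|F\|\,\|\nabla_q g\|$ is more transparent than the paper's route via~\eqref{eq:psi_LH_psi} and the truncated inner products $\langle\cdot,\cdot\rangle_n$, and your derivation of $\|\nabla_q g\|\leq\|F\|$ as a by-product makes \eqref{eq:ineq_LA} immediate. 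For the corollary as stated, however, you are doing an order of magnitude more work than required; the one-line constant-invariance observation suffices.
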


\begin{Remark}
  Note that, by the definition of~$A$ given in Remark~\ref{rem:Adef}, part~(i) of Proposition~\ref{prop:key} readily follows from the above result by extending $A$ to be a bounded operator on $L^2(d\mu)$.    
\end{Remark}

\begin{proof}[Proof of Lemma~\ref{lem:standard}]
  From an algebraic viewpoint, the proof follows the proof of~\cite[Lemma~1]{dolbeault2015hypocoercivity}. Let $\phi \in C_\mathrm{c}^\infty(\X)$.  We start by proving~\eqref{eq:ineq_A}. By Proposition~\ref{prop:solutions}, we know that $\psi = A\phi \in C_\mathrm{b}^\infty(\mathscr{O})$. For simplicity of notation in the arguments that follow, we introduce $\langle f, g\rangle_n:= \langle f, g \chi_n (H) \rangle$ and $\| f\|^2_n = \langle f,f\rangle_n$.  Noting that $\psi + (L_\mathrm{H} \Pi)^*(L_\mathrm{H} \Pi) \psi =-(L_\mathrm{H} \Pi)^* \phi$, we obtain 
  \begin{align}
    \label{eq:psi_LH_psi}
\| \psi\|_{n}^2 + \langle (L_\mathrm{H}\Pi)^* L_\mathrm{H} \Pi \psi, \psi\rangle_n  =-\langle (L_\mathrm{H}\Pi)^* \phi ,\psi \rangle_n.  
 \end{align}       
 First observe that (using that~$\psi$ is a function of~$q$ only, so that $\Pi(\psi \chi_n(H)) = \psi \Pi \chi_n(H)$)
 \begin{align*}
   \langle (L_\mathrm{H}\Pi)^* L_\mathrm{H}\Pi \psi, \psi\rangle_n = \|L_\mathrm{H} \Pi \psi\|_{L^2(\Pi \chi_n(H) d\mu)}^2 + \langle L_\mathrm{H} \Pi \psi, \psi L_\mathrm{H} \Pi \chi_n(H)\rangle =: T_1(n) + T_2(n).  
 \end{align*}
 Recalling $\Pi \psi = \psi$, and using $2\psi L_\mathrm{H} \psi = L_\mathrm{H}(\psi^2)$ as well as an integration by parts, we obtain 
 \begin{align*}
   2 T_2(n) = - \left\langle\psi^2, L_\mathrm{H}^2 \Pi \chi_n(H) \right\rangle. 
 \end{align*}
 Now,
 \[
 \begin{aligned}
   L_\mathrm{H}^2 \Pi \chi_n(H) & = L_\mathrm{H}\left( p^T \Pi \left[ \chi_n'(H) \nabla U\right] \right) \\
   & = -|\nabla U|^2 \Pi \left[ \chi_n'(H)\right] + p^T \left( \Pi\left[\chi_n'(H)\right] \nabla^2 U + \Pi\left[ \chi_n''(H) \right] \nabla U \otimes \nabla U\right) p.
 \end{aligned}
 \]
 Applying Lemma~\ref{lem:diff} and recalling that~$\psi \in C^\infty_\mathrm{b}(\mathscr{O})$, we see that $T_2(n)\rightarrow 0$ as $n\rightarrow \infty$.  On the other hand, 
   \begin{align*}
   -\langle (L_\mathrm{H}\Pi)^* \phi ,\psi \rangle_n= - \langle (1-\Pi) \phi, (L_\mathrm{H} \Pi \psi) \Pi \chi_n(H) \rangle - \langle \phi, \psi  L_\mathrm{H} \Pi \chi_n(H) \rangle=: -T_3(n)- T_4(n),  
   \end{align*}
   where we used that $\Pi L_\mathrm{H} \Pi = 0$ in the first equality. By the same reasoning used above to show that~$T_2(n)\rightarrow 0$ as $n\rightarrow \infty$ via Lemma~\ref{lem:diff}, it also holds that $T_4(n)\rightarrow 0$ as $n\rightarrow \infty$. Putting these estimates together and using Young's inequality for products, we find that
   \begin{align*}
     &  \|\psi\|^2_n  = -\|L_\mathrm{H} \Pi \psi \|_{L^2(\Pi \chi_n(H) d\mu)}^2 - \langle (1-\Pi) \phi, (L_\mathrm{H} \Pi \psi ) \Pi \chi_n(H) \rangle -T_2(n)-T_4(n) \\   
     & \leq  -\|L_\mathrm{H} \Pi \psi \|_{L^2(\Pi \chi_n(H) d\mu)}^2   + \frac{1}{4} \|(1-\Pi) \phi \|^2_{L^2(\Pi \chi_n(H) d\mu)}  + \| L_\mathrm{H} \Pi \psi \|_{L^2(\Pi \chi_n(H) d\mu)}^2 -T_2(n)-T_4(n) \\
     &\leq \frac{1}{4} \|(1-\Pi) \phi \|^2 + |T_2(n)| + |T_4(n)|, 
   \end{align*}
  where on the last line we used the fact that $\chi_n(H) \in [0,1]$. The claimed estimate~\eqref{eq:ineq_A} follows by taking the limit~$n\rightarrow \infty$.  

  We next claim that
  \begin{align}
    \label{eqn:bound2}
    \forall \phi \in C_{\mathrm{c}}^\infty(\X), \qquad \|L_\mathrm{H} \Pi A \phi \|= \| L_\mathrm{H} A \phi\|\leq \|(1-\Pi) \phi\|.
  \end{align}
  To prove this inequality, we use Young's inequality to estimate $T_3$ as
  \begin{align*}
    |T_3(n)| &\leq \frac{1}{2}\| (1-\Pi)\phi\|^2_{L^2(\Pi \chi_n(H) \, d\mu)} + \frac{1}{2}\| L_\mathrm{H} \Pi \psi  \|^2_{L^2(\Pi \chi_n(H) \, d\mu)}\\
    & \leq  \frac{1}{2}\| (1-\Pi)\phi\|^2+ \frac{1}{2}\| L_\mathrm{H} \Pi \psi  \|^2_{L^2(\Pi \chi_n(H) \, d\mu)},
  \end{align*}
  where on the last line we used the fact that $\chi_n(H) \in [0,1]$. Then, 
  \begin{align*}
    \| L_\mathrm{H} \Pi \psi \|^2_{L^2(\Pi \chi_n(H) d\mu)} + \|\psi\|_n^2 &= - T_2(n)-T_3(n)-T_4(n)\\
    &\leq    \frac{1}{2}\| (1-\Pi)\phi\|^2+ \frac{1}{2}\| L_\mathrm{H} \Pi \psi  \|^2_{L^2(\Pi \chi_n(H) \, d\mu)} +|T_2(n)|+|T_4(n)|.
  \end{align*}  
  The bound~\eqref{eqn:bound2} follows after taking $n\rightarrow \infty$. Since $\Pi A = A$, this finally gives
\begin{equation}  
\label{eqn:bound3}
  \begin{aligned}
        \forall \phi \in C_\mathrm{c}^\infty(\X), \qquad | \langle L_\mathrm{H} A \phi, \phi  \rangle| & =| \langle L_\mathrm{H} A \phi, (1-\Pi)\phi  \rangle|= | \langle L^* A \phi, (1-\Pi)\phi \rangle | \\
    & \leq \|(1-\Pi)\phi\|^2.  
  \end{aligned}
  \end{equation}
      
  Finally, a short calculation shows that $AL_{\mathrm{OU}}=-A$, so that, for any $\phi \in C_\mathrm{c}^\infty(\X)$,
  \[
    |\langle A L_{\mathrm{OU}} \phi, \phi \rangle| = |\langle A L_{\mathrm{OU}} \phi, \Pi\phi \rangle| = |\langle A \phi, \Pi \phi \rangle| \leq \frac{1}{2} \| (1-\Pi) \phi \| \| \Pi \phi \|.  
  \] 
  This concludes the proof of the lemma.  
 \end{proof}

We next turn to the elliptic regularity estimate. Note that the proof follows arguments similar to those in~\cite{CLW_19}, but with a slightly different application of the more general bound in Assumption~\ref{assump:gc}.  We also refer the reader to~\cite[Lemma~4]{BFS_20} for a related result.   

\begin{Proposition}
  \label{P:NablaSquared}
  Suppose that the potential $U$ satisfies Assumptions~\ref{assump:basic} and~\ref{assump:gc}.  Let $c_1\in (0,1)$,  $C_2>0$ be such that the bound in~\eqref{eqn:c1C2} is satisfied and let $\varepsilon >0$ satisfy~\eqref{eqn:eps}.  Let $C_\varepsilon$ be the corresponding constant for this choice of $\varepsilon$ in Assumption~\ref{assump:gc}, and set $\xi_\varepsilon = \eta_\varepsilon^2/2$ with $\eta_\varepsilon$ defined in~\eqref{eqn:constants}. Then, for any $\phi \in C_\mathrm{b}^\infty(\mathscr{O})$,
  \[
  \left\|\nabla_q^2\psi\right\|_{L^2(d\mu_{\mathrm{OD}})}^2\leq \xi_\varepsilon \|\phi\|_{L^2(d\mu_{\mathrm{OD}})}^2,
  \]
  where $\psi\in C_\mathrm{b}^\infty(\mathscr{O})$ is the unique classical bounded solution of $(1-L_\mathrm{OD}) \psi = \phi$ as given by Proposition~\ref{prop:solutions}.   
\end{Proposition}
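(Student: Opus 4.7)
The plan is a Bochner/$\Gamma_2$ computation adapted to the weighted measure $\mu_\mathrm{OD}$, combined with the elliptic identity $\psi-L_\mathrm{OD}\psi=\phi$. Throughout the argument I will write $\|\cdot\|$ for the $L^2(d\mu_\mathrm{OD})$ norm. All integrations by parts will be justified by inserting the cutoffs $\chi_n(U)$ from~\eqref{eqn:cutoffp} and passing to the limit; the residual terms involving $\chi_n'(U)\nabla U$ vanish thanks to the integrability provided by Lemma~\ref{lem:diff}(i)--(ii), namely $|\nabla U|^r\in L^1(d\mu_\mathrm{OD})$ and $\nabla_q\psi\in L^2(|\nabla U|^r\,d\mu_\mathrm{OD})$ for every $r\geq 1$.

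The first step is to square the identity $\psi-L_\mathrm{OD}\psi=\phi$ and integrate against $d\mu_\mathrm{OD}$. Using $-\int\psi\,L_\mathrm{OD}\psi\,d\mu_\mathrm{OD}=\|\nabla_q\psi\|^2$ one obtains the energy identity
\[
\|\phi\|^2 = \|\psi\|^2+2\|\nabla_q\psi\|^2+\|L_\mathrm{OD}\psi\|^2.
\]
Next I would apply the Bochner identity for the symmetric diffusion $L_\mathrm{OD}$,
\[
\|L_\mathrm{OD}\psi\|^2 = \|\nabla_q^2\psi\|^2+\int \nabla_q\psi^T\,\nabla^2 U\,\nabla_q\psi\,d\mu_\mathrm{OD},
\]
and bound the curvature term via Assumption~\ref{assump:gc} by $\varepsilon\int|\nabla U|^2|\nabla_q\psi|^2\,d\mu_\mathrm{OD}+C_\varepsilon\|\nabla_q\psi\|^2$. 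At this stage the problem is reduced to absorbing the weighted integral $\int|\nabla U|^2|\nabla_q\psi|^2\,d\mu_\mathrm{OD}$ into a small multiple of $\|\nabla_q^2\psi\|^2$.

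This absorption step is the main technical point. I would compute $\int|\nabla U|^2|\nabla_q\psi|^2\,d\mu_\mathrm{OD}$ by integrating $\nabla U\cdot\nabla(|\nabla_q\psi|^2)$ against $e^{-U}\,dq$, which produces the identity
\[
\int|\nabla U|^2|\nabla_q\psi|^2\,d\mu_\mathrm{OD}
= \int (\Delta U)|\nabla_q\psi|^2\,d\mu_\mathrm{OD}+2\int (\nabla_q^2\psi\,\nabla_q\psi)\cdot\nabla U\,d\mu_\mathrm{OD}.
\]
Substituting $\Delta U\leq c_1|\nabla U|^2+C_2$ from~\eqref{eqn:c1C2} and applying Young's inequality to the cross term with the optimal parameter $a=2/(1-c_1)$ yields
\[
\int|\nabla U|^2|\nabla_q\psi|^2\,d\mu_\mathrm{OD}\leq \frac{4}{(1-c_1)^2}\|\nabla_q^2\psi\|^2+\frac{2C_2}{1-c_1}\|\nabla_q\psi\|^2.
\]
The restriction~\eqref{eqn:eps}, $\varepsilon<(1-c_1)^2/4$, is exactly what makes the resulting coefficient $1-4\varepsilon/(1-c_1)^2$ of $\|\nabla_q^2\psi\|^2$ strictly positive.

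To close the argument I would test the equation against $\psi$ to obtain the elementary bound $\|\nabla_q\psi\|^2\leq\|\phi\|^2/4$, assemble the previous estimates, and drop the nonnegative term $\|\psi\|^2$. Distinguishing the two cases according to the sign of $-2+C_\varepsilon+2C_2\varepsilon/(1-c_1)$, and using in the positive case the elementary inequality $(1+A)/2\leq A$ when $A\geq 1$ with $A=\tfrac12(C_\varepsilon+2C_2\varepsilon/(1-c_1))$, one obtains
\[
\|\nabla_q^2\psi\|^2 \leq \left(1-\frac{4\varepsilon}{(1-c_1)^2}\right)^{-1}\max\left\{1,\frac12\left(C_\varepsilon+\frac{2C_2\varepsilon}{1-c_1}\right)\right\}\|\phi\|^2 = \xi_\varepsilon\|\phi\|^2,
\]
which matches $\xi_\varepsilon=\eta_\varepsilon^2/2$ defined via~\eqref{eqn:constants}. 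The hardest part of the proof is not the algebra but the rigorous execution of the integration by parts in the absorption step on the open set $\mathscr{O}$, which is why the cutoff scheme and the moment bounds of Lemma~\ref{lem:diff} are indispensable.
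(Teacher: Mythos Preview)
Your proposal is correct and follows essentially the same route as the paper: Bochner's formula, the curvature bound from Assumption~\ref{assump:gc}, and the key absorption step in which $\int |\nabla U|^2|\nabla_q\psi|^2\,d\mu_\mathrm{OD}$ is controlled by $\|\nabla_q^2\psi\|^2$ via integration by parts, the Laplacian bound~\eqref{eqn:c1C2}, and Young's inequality with parameter $(1-c_1)/2$. The only organizational difference is that the paper first proves the operator inequality $\|\nabla_q^2\varphi\|^2\leq\xi_\varepsilon\|(1-L_\mathrm{OD})\varphi\|^2$ for $\varphi\in C_\mathrm{c}^\infty(\mathscr{O})$ (so all integrations by parts are trivially justified) and then applies it to $\varphi=\psi\chi_n(U)$, whereas you insert the cutoffs directly into each identity for $\psi$; your final assembly via the case split and the bound $\|\nabla_q\psi\|^2\leq\|\phi\|^2/4$ is a minor algebraic variant of the paper's use of $\|(1-L_\mathrm{OD})\varphi\|^2=\|\varphi\|^2+2\|\nabla_q\varphi\|^2+\|L_\mathrm{OD}\varphi\|^2$.
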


\begin{proof}[Proof of Proposition~\ref{P:NablaSquared}]
  Since all the functions which appear in this proof are functions of~$q$ only, and since the norm $\| \cdot \|$ coincides with the norm $\| \cdot \|_{L^2(d\mu_{\mathrm{OD}})}$ in this case, we use in this proof the notation~$\| \cdot \|$ for $\| \cdot \|_{L^2(d\mu_{\mathrm{OD}})}$. We first prove that
  \begin{equation}
    \label{eq:tech_estimate_prop5}
    \forall \varphi \in C_\mathrm{c}^\infty(\mathscr{O}),
    \qquad
    \left\| \nabla^2_q \varphi \right\|^2 \leq \xi_\varepsilon \| (1- L_\mathrm{OD}) \varphi \|^2.
  \end{equation}
  We then conclude the desired estimate using a density argument afforded by Lemma~\ref{lem:diff}.

  Fix $\varphi \in C_\mathrm{c}^\infty(\mathscr{O})$. Bochner's formula gives 
  \[
  \left\|\nabla_q^2\varphi\right\|^2 = \sum_{i,j=1}^d \left\|\partial_{q_i,q_j}^2 \varphi \right\|^2 = \left\|\nabla_q^*\cdot\nabla_q\varphi\right\|^2-\left\langle\nabla_q\varphi,\nabla^2U(q)\nabla_q\varphi\right\rangle.
  \]
Using Assumption~\ref{assump:gc} with the choice of $\varepsilon, C_\varepsilon$ as in the statement of the result leads to
\begin{equation}
  \label{eq:Bochner_bound}
  \left\|\nabla_q^2\varphi\right\|^2\leq \|\nabla_q^*\cdot\nabla_q\varphi\|^2+\varepsilon\||\nabla U(q)||\nabla_q\varphi|\|^2+C_\varepsilon\|\nabla_q\varphi\|^2.
\end{equation}
To control $\||\nabla U(q)||\nabla_q\varphi|\|$, we use the the structure of the Gibbs measure~$\mu$ to write, for any $f \in C_\mathrm{c}^\infty(\mathscr{O})$,
\[
\|f\nabla U(q)\|^2 = \left\langle \nabla_q(f^2),\nabla U(q)\rangle+\langle f^2,\Delta U(q)\right\rangle = 2 \left\langle \nabla_qf,f\nabla U(q)\rangle+\langle f^2,\Delta U(q) \right\rangle.
\]
With the previous inequality, the estimate~\eqref{eqn:c1C2} and Young's inequality, we obtain, for any~$\eta \in (0,1-c_1)$,
\[
\|f\nabla U(q)\|^2\leq (c_1+\eta)\|f\nabla U(q)\|^2+\eta^{-1}\|\nabla_qf\|^2+C_2\|f\|^2,
\]
so that, by rearranging the previous expression, 
\begin{align}
\label{eqn:phibound}
\|f\nabla U(q)\|^2\leq \frac{\eta^{-1}}{1-c_1-\eta}\|\nabla_qf\|^2+\frac{C_2}{1-c_1-\eta}\|f\|^2.
\end{align}
The above estimate can be extended by density to any $f \in H^1(\mu_{\mathrm{OD}})$. Now, if $\varphi \in C^\infty_\mathrm{c}(\mathscr{O})$, then for all $\zeta>0$ we define $g_\zeta:=\sqrt{|\nabla_q\varphi|^2+\zeta}\in H^1(\mu_{\mathrm{OD}})$. Therefore, by applying~\eqref{eqn:phibound} to~$g_\zeta$, 
\[
\| |\nabla U | |\nabla_q \varphi | \|^2\leq \left\| | \nabla U(q)|g_\zeta\right\|^2 \leq \frac{\eta^{-1}}{1-c_1-\eta}\left\|\nabla_q^2\varphi\right\|^2+\frac{C_2}{1-c_1-\eta}\|g_\zeta\|^2.
\]
Taking $\zeta\rightarrow 0$, we arrive at the following inequality 
\[
\| |\nabla U|  | \nabla_q \varphi | \|^2 \leq \frac{\eta^{-1}}{1-c_1-\eta}\left\|\nabla_q^2\varphi\right\|^2+\frac{C_2}{1-c_1-\eta}\|\nabla_q \varphi\|^2.   
\]
Combining this estimate with~\eqref{eq:Bochner_bound} gives
\[
\left\|\nabla_q^2\varphi\right\|^2\leq \|\nabla_q^*\cdot\nabla_q\varphi\|^2+\left(C_{\varepsilon}+\frac{\varepsilon C_2}{1-c_1-\eta}\right)\|\nabla_q\varphi\|^2+\frac{\varepsilon\eta^{-1}}{1-c_1-\eta}\left\|\nabla_q^2\varphi\right\|^2,
\]
and, after rearranging the various terms and provided~$\eta(1-c_1-\eta)>\varepsilon$, 
\[
\left\|\nabla_q^2\varphi\right\|^2\leq \left(1-\frac{\varepsilon\eta^{-1}}{1-c_1-\eta}\right)^{-1}\left[\|\nabla_q^*\cdot\nabla_q\varphi\|^2+\left(C_{\varepsilon}+\frac{\varepsilon C_2}{1-c_1-\eta}\right)\|\nabla_q\varphi\|^2\right].
\]
Choosing~$\eta=(1-c_1)/2$, and noting that
\[
  \left\| (1+\nabla^*_q \cdot \nabla_q) \varphi \right\|^2=\|\varphi\|^2+2\left\|\nabla_q\varphi\right\|^2+\left\|\nabla_q^*\cdot\nabla_q\varphi\right\|^2,
\]
the inequality reduces to
\begin{align}
  \left\|\nabla_q^2\varphi\right\|^2 &\leq \max\left\{1,\frac{1}{2}\left(C_\varepsilon+\frac{2C_2\varepsilon}{1-c_1}\right)\right\}\left(1-\frac{4\varepsilon}{(1-c_1)^2}\right)^{-1}\left\|(1+\nabla_q^*\cdot\nabla_q)\varphi\right\|^2 \nonumber\\
  &= \xi_\varepsilon \left\| (1+ \nabla_q^* \cdot \nabla_q ) \varphi \right\|^2. \label{eqn:baseest} 
\end{align}
Since $-L_\mathrm{OD} = \nabla_q^* \cdot \nabla_q$, this indeed provides the claimed bound~\eqref{eq:tech_estimate_prop5}. 

Now consider~$\phi \in C_\mathrm{b}^\infty(\mathscr{O})$ and $\psi \in C_\mathrm{b}^\infty(\mathscr{O})$ the unique classical bounded solution of $(1-L_\mathrm{OD}) \psi = \phi$.  Plugging $\varphi = \psi  \chi_n(U) \in C^\infty_\mathrm{c}(\mathscr{O})$ into~\eqref{eqn:baseest} gives 
\begin{align*}
\forall n \geq 1, \qquad \left\| \nabla_q^2 (\psi \chi_n(U)) \right\|^2 \leq \xi_\varepsilon \| (1-L_\mathrm{OD}) (\psi \chi_n(U)) \|^2.
\end{align*}
In view of Lemma~\ref{lem:diff}(ii) and Assumption~\ref{assump:gc}, we therefore obtain, by dominated convergence,
\begin{align*}
\lim_{n\rightarrow \infty} \left\| \nabla_q^2 (\psi \chi_n(U)) \right\|^2 = \left\| \nabla_q^2 \psi \right\|^2 \leq \xi_\varepsilon \| \phi \|^2 = \xi_\varepsilon \lim_{n\rightarrow \infty} \left\| (1-L_\mathrm{OD}) (\psi \chi_n(U)) \right\|^2, 
\end{align*} 
which concludes the proof.  
\end{proof}

With the previous estimate at hand, we conclude this subsection with one final estimate.  This is the final ingredient needed in the proof of Proposition~\ref{prop:key} parts~(ii) and~(iii).  

\begin{Lemma}
  \label{lem:weight}
  Let $U$ satisfy Assumptions~\ref{assump:basic} and~\ref{assump:gc}, and let $\eta_\varepsilon$ be as in~\eqref{eqn:constants}. Then, 
  \begin{align}
    \forall \phi \in C_{\mathrm{c},0}^\infty(\X),
    \qquad
    |\langle AL_{\mathrm{H}}(1-\Pi)\phi,\phi\rangle|\leq\eta_\varepsilon \|(1-\Pi)\phi\|\|\Pi\phi\|.
  \end{align}    
\end{Lemma}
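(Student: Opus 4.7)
My plan is to reduce the estimate to a direct Cauchy--Schwarz bound combined with the elliptic regularity estimate of Proposition~\ref{P:NablaSquared}, after carefully rewriting $\langle AL_{\mathrm{H}}(1-\Pi)\phi,\phi\rangle$ using the definition of $A$ and the Hamiltonian structure.

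First I would use the fact that $A$ takes values in functions of $q$ only (so that $\Pi A = A$) to write $\langle AL_{\mathrm{H}}(1-\Pi)\phi,\phi\rangle = \langle AL_{\mathrm{H}}(1-\Pi)\phi,\Pi\phi\rangle$. Introducing $\psi = (1-L_{\mathrm{OD}})^{-1}\Pi\phi$, which is well-defined and belongs to $C_\mathrm{b}^\infty(\mathscr{O})\cap H^1(d\mu_{\mathrm{OD}})\cap H^1(|\nabla U|^r\,d\mu_{\mathrm{OD}})$ by Proposition~\ref{prop:solutions} and Lemma~\ref{lem:diff} (noting that $\Pi\phi \in C_\mathrm{b}^\infty(\mathscr{O})$ whenever $\phi \in C_{\mathrm{c},0}^\infty(\X)$), I would then transfer $(1-L_\mathrm{OD})^{-1}$ and $\Pi L_{\mathrm{H}}$ from the left factor to the right using the self-adjointness of $(1-L_\mathrm{OD})^{-1}$ on $L^2(d\mu_\mathrm{OD})$ and the identity $\Pi L_\mathrm{H}^* = -\Pi L_\mathrm{H}$, obtaining the identity $\langle AL_\mathrm{H}(1-\Pi)\phi,\Pi\phi\rangle = -\langle L_\mathrm{H}(1-\Pi)\phi, L_\mathrm{H}\psi\rangle$.

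Since $\psi$ depends only on $q$, we have $L_\mathrm{H}\psi = p\cdot\nabla_q\psi$. I would then integrate by parts using $L_\mathrm{H}^* = -L_\mathrm{H}$ to move one $L_\mathrm{H}$, yielding $\langle (1-\Pi)\phi, L_\mathrm{H}(p\cdot\nabla_q\psi)\rangle$. A direct computation gives
\[
L_\mathrm{H}(p\cdot\nabla_q\psi) = p^T(\nabla_q^2\psi)p-\nabla U\cdot\nabla_q\psi = \left[p^T(\nabla_q^2\psi)p-\Delta_q\psi\right] + L_\mathrm{OD}\psi.
\]
The remainder $L_\mathrm{OD}\psi = \psi-\Pi\phi$ is a function of $q$ only, so $\langle (1-\Pi)\phi, L_\mathrm{OD}\psi\rangle = 0$, leaving
\[
\langle AL_\mathrm{H}(1-\Pi)\phi,\phi\rangle = \left\langle (1-\Pi)\phi,\,p^T(\nabla_q^2\psi)p-\Delta_q\psi\right\rangle.
\]

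Applying Cauchy--Schwarz then produces $\|(1-\Pi)\phi\|\cdot \|p^T(\nabla_q^2\psi)p-\Delta_q\psi\|$, and the Gaussian fourth-moment identity $\mathbf{E}_{\mu_\mathrm{OU}}[(p^TMp-\mathrm{tr}\,M)^2] = 2|M|_F^2$ for symmetric $M$ evaluates the second factor exactly as $\sqrt{2}\,\|\nabla_q^2\psi\|_{L^2(d\mu_\mathrm{OD})}$. Finally, since $\Pi\phi \in C_\mathrm{b}^\infty(\mathscr{O})$, Proposition~\ref{P:NablaSquared} applied to $\psi$ yields $\|\nabla_q^2\psi\|^2_{L^2(d\mu_\mathrm{OD})}\leq \xi_\varepsilon\|\Pi\phi\|^2$, and combining these with the identity $\eta_\varepsilon=\sqrt{2\xi_\varepsilon}$ produces the claim.

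The main obstacle I foresee is justifying the formal manipulations rigorously: while $\psi\in C_\mathrm{b}^\infty(\mathscr{O})$, the function $(1-\Pi)\phi$ is not compactly supported on $\X$ (only $\phi$ itself is, modulo a constant), and integration by parts against $p\cdot\nabla_q\psi$ involves terms like $\nabla U\cdot\nabla_q\psi$ that are not obviously $L^2$-integrable without the moment bounds of Lemma~\ref{lem:diff}(i)--(ii). A cutoff argument using the sequence $\chi_n(H)$, in the spirit of the proof of Lemma~\ref{lem:standard}, would be needed to make the algebra rigorous, with the error terms controlled exactly as in that proof by dominated convergence using $|\nabla U|\in L^r(d\mu_\mathrm{OD})$.
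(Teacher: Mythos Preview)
Your proposal is correct and follows essentially the same approach as the paper: both arguments reduce to the identity $\langle AL_{\mathrm{H}}(1-\Pi)\phi,\phi\rangle = \langle (1-\Pi)\phi,\,(p^T(\nabla_q^2\psi)p-\Delta_q\psi)\rangle$ with $\psi=(1-L_{\mathrm{OD}})^{-1}\Pi\phi$, apply Cauchy--Schwarz, compute $\|p^T(\nabla_q^2\psi)p-\Delta_q\psi\|^2=2\|\nabla_q^2\psi\|^2$ via Gaussian moments, and invoke Proposition~\ref{P:NablaSquared}. The only cosmetic difference is that the paper packages your sequence of operator moves into a single formal adjoint computation $[(AL_{\mathrm{H}})(1-\Pi)]^*=\big((p\cdot\nabla_q)^2-\Delta_q\big)(1+\nabla_q^*\cdot\nabla_q)^{-1}\Pi$ and does not introduce cutoffs in this particular lemma, so your caution there is slightly more than strictly needed.
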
  

\begin{proof}
  Let $\phi \in C_{\mathrm{c},0}^\infty(\X)$. Recall that~$L_{\mathrm{OD}} = -\nabla_q^*\cdot\nabla_q$. A direct calculation yields
  \[
  \big[(AL_{\mathrm{H}})(1-\Pi)\big]^*=\left((p\cdot\nabla_q)^2-\Delta_q\right)\left(1+\nabla_q^*\cdot\nabla_q\right)^{-1}\Pi.
  \]
  Hence
  \begin{align*}
    |\langle AL_{\mathrm{H}}(1-\Pi)\phi,\phi\rangle|&=\big|\big\langle(1-\Pi)\phi,\big[AL_{\mathrm{H}}(1-\Pi)\big]^*\phi\big\rangle\big|\\
    &= \left|\left\langle(1-\Pi)\phi,\left((p\cdot\nabla_q)^2-\Delta_q\right)\left(1+\nabla_q^*\cdot\nabla_q\right)^{-1}\Pi\phi\right\rangle\right| \\
    &\leq \| (1- \Pi) \phi \| \left\| \left((p\cdot\nabla_q)^2-\Delta_q\right)\left(1+\nabla_q^*\cdot\nabla_q\right)^{-1}\Pi\phi \right\|.
  \end{align*}
  Since $\psi=(1+\nabla_q^*\cdot\nabla_q)^{-1}\Pi\phi\in C_\mathrm{b}^\infty(\mathscr{O})$ is a function of the variable~$q$ only, we have, from the structure of $\mu$,
  \begin{align*}
    &    \left\| \left( (p\cdot\nabla_q)^2-\Delta_q \right)\psi\right\|^2 = \left\|\sum_{i,j=1}^d p_ip_j\partial_{q_i,q_j}^2\psi -\Delta_q \psi \right\|^2\\
    & \qquad =\| \Delta_q \psi\|^2 + \sum_{i,j,k, \ell=1}^d \int_\X p_ip_j p_k p_\ell \left(\partial^2_{q_i q_j}\psi\right)\left(\partial_{q_k q_\ell}^2 \psi\right) d\mu - 2\sum_{i,j=1}^d \left\langle p_i p_j\partial_{q_i,q_j}^2\psi,\Delta_q \psi\right\rangle \\
    & \qquad =\| \Delta_q \psi\|^2 + \left(\sum_{i=j,k=\ell}+\sum_{i=k,j=\ell}+\sum_{i=\ell,j=k}-2\sum_{i=j=k=\ell}\right) \int_\X p_ip_j p_k p_\ell \left(\partial^2_{q_i q_j}\psi\right)\left(\partial_{q_k q_\ell}^2 \psi\right) d\mu\\
    &\qquad\qquad- 2\sum_{i=1}^d \left\langle p_i^2\partial_{q_i}^2\psi,\Delta_q \psi\right\rangle \\
    & \qquad = -\|\Delta_q\psi\|^2+\sum_{i,j=1}^d \int_\X p_i^2 p_j^2\left[2\left(\partial_{q_i q_j}^2\psi\right)^2+\left(\partial_{q_i}^2\psi\right)\left(\partial_{q_j}^2\psi\right)\right] d\mu - 2\sum_{i=1}^d \int_\X p_i^4 \left(\partial_{q_i}^2\psi\right)^2 \, d\mu \\
    & \qquad = 2\sum_{i,j=1}^d \left\|\partial_{q_i q_j}^2\psi\right\|^2 + \sum_{i,j=1}^d \left\langle \partial_{q_i}^2\psi,\partial_{q_j}^2\psi \right\rangle -\|\Delta_q\psi\|^2 = 2 \left\|\nabla_q^2\psi\right\|^2\leq 2 \xi_\varepsilon\| \Pi \phi\|^2, 
  \end{align*}
  where the last inequality follows by Proposition~\ref{P:NablaSquared}.  Also, in the above, we can calculate the terms involving products of the $p_i$'s since $\psi$ is a function of $q$ only, and $\mu$ is a product measure with a standard Gaussian in $p$ as its $p$-marginal.  In particular, along the $i=j$ terms of the final summation we recover the fourth-moment of the Gaussian measure in $p$ (\emph{i.e.} a coefficient of~$3$), while for the $i\neq j$ terms of the final sum we recover the variance, which is~$1$. It thus now follows $|\langle AL_{\mathrm{H}}(1-\Pi)\phi,\phi\rangle|\leq \sqrt{2 \xi_\varepsilon} \|(1-\Pi)\phi\|\|\Pi\phi\|= \eta_\varepsilon \| (1-\Pi) \phi \| \| \Pi \phi \|$, as claimed.
\end{proof}

\subsection{Proof of Proposition~\ref{prop:key}}
\label{sec:proof_roposition_prop:key}

We conclude this section by giving a proof of Proposition~\ref{prop:key}.  This is the last theoretical step in the proof of our main general results, namely Theorem~\ref{thm:premain} part~(i) and Theorem~\ref{thm:main} part~(i).     

\begin{proof}[Proof of Proposition~\ref{prop:key}]

Part~(i) of the result is a direct consequence of Lemma~\ref{lem:standard} and a density argument. Let us next turn to part~(ii) of the result. For any $\phi \in C_\mathrm{b}^\infty(\X)$ and $n\in \N$, we define (recalling the definition~\eqref{eqn:cutoffp} for~$\chi_n(H)$)
\begin{align}
P_t^n\phi= \chi_n(H) P_t \phi, \qquad \sP\phi=\int_\X \phi \, d\mu, \qquad  \sPp\phi = \phi - \sP \phi.  
\end{align}
Consider now $\phi \in C_{\mathrm{c},0}^\infty(\X)$. Since $A\sPp \varphi = A\varphi$ for any $\varphi \in C_\mathrm{c}^\infty(\X)$, and recalling that $\frac{d}{dt} P_t\phi = LP_t \phi = P_t L\phi \in C^\infty([0,+\infty) \times \X)$  by hypoellipticity~\cite{Hor_67},
\[
\frac{d}{dt} \left\langle  A \sPp P_t^n \phi, \sPp P_t^n \phi \right\rangle = T_1 + T_2+ T_3,
\]
with
\[
\begin{aligned}
  T_1 &= \left\langle  A [(L P_t \phi) \chi_n(H)], \sPp P_t^n \phi \right\rangle, \\
  T_2 &= \left\langle  A \sPp P_t^n \phi, (LP_t \phi) \chi_n(H) \right\rangle, \\
  T_3 &= -\sP\left[ A P_t^n \phi\right]\sP \left[(LP_t \phi) \chi_n(H)\right]. 
\end{aligned}
\]
Using the identity $L(fg)=fLg+ gLf+2\gamma \nabla_p f \cdot \nabla_p g$, we can write
  \begin{align*}
    T_1 &= \left\langle A L \sPp P_t^n \phi , \sPp P_t^n \phi\right\rangle - \left\langle A [P_t \phi L \chi_n(H)], \sPp P_t^n \phi \right\rangle \\
    & \qquad - 2\gamma \left\langle A[ \nabla_p(P_t \phi) \cdot \nabla_p \chi_n(H)], \sPp P_t^n \phi \right\rangle \\
&=: T_1' + R_1(n)+ R_2(n).   
  \end{align*}
  In order to help estimate $R_2(n)$, note that, for $f\in C^\infty(\X)$ and $g \in C_\mathrm{c}^\infty(\X)$,
  \[
A \left(\nabla_p f \cdot \nabla_p g\right) = -\sum_{i=1}^d (1-L_\mathrm{OD})^{-1}\partial_{q_i}^* \Pi[\partial_{p_i}\left(\nabla_p f \cdot \nabla_p g\right)].
\]
Next, using nearly identical arguments to those in Lemma~\ref{lem:standard}, observe that the operators $\mathcal{T}_i = (1-L_\mathrm{OD})^{-1}\partial_{q_i}^*$ are bounded on~$L^2(d\mu_\mathrm{OD})$ with norms smaller than~1 since
\[
\begin{aligned}
  \sum_{i=1}^d \mathcal{T}_i \mathcal{T}_i^* & =\sum_{i=1}^d (1-L_\mathrm{OD})^{-1}\partial_{q_i}^* \left[(1-L_\mathrm{OD})^{-1}\partial_{q_i}^*\right]^* = (1-L_\mathrm{OD})^{-1}\left(\sum_{i=1}^d \partial_{q_i}^* \partial_{q_i}\right) (1-L_\mathrm{OD})^{-1} \\
  & = -L_\mathrm{OD}(1-L_\mathrm{OD})^{-2}\leq 1.   
\end{aligned}
\]
On the other hand, integrating by parts in the~$p$ variable produces
\[
\begin{aligned}
\Pi\left[\partial_{p_i}\left(\nabla_p f \cdot \nabla_p g\right)\right] &
= \Pi \left[ f \nabla_p^* \cdot \left(p_i \nabla_p g \right)\right] = \Pi\left[ f (p_i p\cdot \nabla_p g - p_i \Delta_p g - \partial_{p_i}g)\right].  
\end{aligned}
\]
Putting these estimates together with Lemma~\ref{lem:standard} and $\|P_t\phi\|_\infty \leq \|\phi\|_\infty$, we can estimate $R_1(n)$ and $R_2(n)$ as
  \[
  |R_1(n)|+ |R_2(n)| \leq c_n^1 \| \phi \|_\infty \left\| \sPp P_t^n \phi \right\| \leq c_n^1 \| \phi\|_\infty^2 
  \]
  where we used the fact that $\chi_n\in [0,1]$ and introduced
  \[
  c_n^1 = \frac{1}{2} \| L \chi_n(H) \| + 2\gamma \sum_{i=1}^d \left\| p_i \left(p^2-d-1\right) \chi_n'(H) + p_i p^2 \chi_n''(H)  \right\|.
  \]
 By construction of $\chi_n(H)$ and the moment estimates of Lemma~\ref{lem:diff}, a simple argument based on dominated convergence shows that $c_n^1\rightarrow 0$ as $n\to \infty$.      
  
  Turning to the term $T_2$, a similar argument yields
  \begin{align*}
    T_2 &= \left\langle L^* A\sPp P_t^n \phi, \sPp P_t^n \phi \right\rangle - \left\langle A\sPp P_t^n \phi, P_t \phi L \chi_n(H) \right\rangle \\
    & \qquad - 2\gamma \left\langle A\sPp P_t^n \phi, \nabla_p(P_t \phi) \cdot \nabla_p  \chi_n(H) \right\rangle\\
  &= \left\langle L^* A\sPp P_t^n \phi, \sPp P_t^n \phi \right\rangle - \left\langle A\sPp P_t^n \phi, P_t \phi L \chi_n(H) \right\rangle \\
  & \qquad- 2\gamma \left\langle A\sPp P_t^n \phi, (P_t \phi) (p \cdot \nabla_p-\Delta_p)\chi_n(H) \right\rangle \\
  &= :T_2' +R_3(n) + R_4(n),  
  \end{align*}
  with $|R_3(n)|+ |R_4(n)| \leq c_n^2 \| \phi\|_\infty^2$ where $c_n^2\rightarrow 0$ as $n\rightarrow \infty$.  Finally, since $\sP \left[(LP_t \phi) \chi_n(H)\right] = \sP \left[P_t \phi L^* \chi_n(H)\right]$, the term $T_3$ can be estimated as
  \begin{align*}
  |T_3|\leq \frac{1}{2}\|P_t^n\phi \|\| \phi\|_\infty \|L^*\chi_n(H)\|\leq c_3^n \|\phi\|_\infty^2, 
  \end{align*}  
  with $c_3^n \to 0$ as $n \to +\infty$ in view of Lemmas~\ref{lem:diff} and~\ref{lem:standard}.
  
  It remains at this stage to treat the terms~$T_1'$ and~$T_2'$. Using Lemmas~\ref{lem:standard} and~\ref{lem:weight} and the coercivity of $AL_\mathrm{H} \Pi$ (granted by~\eqref{eq:coercivity_q} and extended to functions in~$L^2(d\mu)$ by a limiting argument),
\begin{align*}
T_1'+ T_2' &= \left\langle A L_\mathrm{H} \Pi \sPp P_t^n \phi, \sPp P_t^n \phi \right\rangle + \left\langle A L_\mathrm{H}(1- \Pi) \sPp P_t^n \phi, \sPp P_t^n \phi \right\rangle\\
&\qquad + \gamma \left\langle A L_\mathrm{OU}\sPp P_t^n \phi, \sPp P_t^n \phi \right\rangle + \left\langle L^* A  \sPp P_t^n \phi, \sPp P_t^n \phi \right\rangle  \\
&\leq -\frac{\rho}{\rho+1} \left\| \Pi \sPp P_t^n \phi \right\|^2 + \left(\eta_\varepsilon+\frac\gamma2\right) \left\| (1-\Pi) \sPp P_t^n \phi\right\| \left\| \Pi \sPp P_t^n \phi \right\| \\
&\qquad+ \left\| (1-\Pi) \sPp P_t^n \phi \right\|^2. 
\end{align*}
Combining these estimates, integrating from $s$ to $t$, and then taking $n\rightarrow \infty$ produces the following estimate
\begin{align*}
\left\langle A P_t \phi, P_t \phi \right\rangle \leq \left\langle A P_s \phi, P_s \phi \right\rangle & + \int_s^t \left[- \frac{\rho}{\rho+1}\| \Pi P_u \phi\|^2 + \left(\eta_\varepsilon+\frac\gamma2\right) \| (1-\Pi) P_u \phi\| \| \Pi P_u \phi\| \right] du\\
& + \int_s^t  \| (1-\Pi) P_u \phi\|^2 \, du,   
\end{align*}
where we used that~$P_t^n\phi$ converges to~$P_t \phi$ in~$L^2(d\mu)$ by the dominated convergence theorem since $P_t^n\phi$ converges to $P_t\phi$ pointwise and both quantities are bounded by $\|\phi\|_\infty$.  This concludes the proof of part~(ii).  

For part~(iii), we introduce $\psi= P_t \phi \in C_{0,\mathrm{b}}^\infty(\X)$.  For any $n\in \N$,
\begin{align}
  \frac{d}{dt} \| P_t\phi \|^2_{L^2(W^*\chi_n(H) \,d\mu)} &= 2 \int_\X \psi (L\psi) W^* \chi_n(H)\, d\mu \nonumber \\
  & = -2 \gamma \int_\X |\nabla_p \psi|^2 W^* \chi_n(H)\, d\mu + \int_\X  L\left(\psi^2\right) W^* \chi_n(H)\, d\mu. \label{eq:preparation_Gronwall_weighted} 
\end{align}
The second integral on the right hand side of the above equality can be estimated as follows since $W^*\in \mathscr{W}_{\alpha,\beta}(L^*)$:
\begin{align*}
  & \int_\X  L\left(\psi^2\right) W^* \chi_n(H)\, d\mu = \int_\X \psi^2 L^*\left[W^* \chi_n(H)\right] d\mu \\
  & \qquad = \int_\X \psi^2 (L^*W^*) \chi_n(H)\, d\mu + \int_\X \psi^2 W^* \left[L^*\chi_n(H)\right] d\mu + 2\gamma \int_\X \psi^2 \nabla_pW^* \cdot \nabla_p\left[\chi_n(H)\right] d\mu \\
  & \qquad \leq \int_\X \psi^2 (-\alpha W^* + \beta)  \chi_n(H)\, d\mu + \int_\X \psi^2 W^* \left[L^*\chi_n(H)\right] d\mu + 2\gamma \int_\X \psi^2 \nabla_pW^* \cdot \nabla_p\left[\chi_n(H)\right] d\mu.
\end{align*}
Since $W^*$ is strongly integrable, Lemma~\ref{lem:diff} implies that the last two integrals on the last line above converge to $0$ as $n\rightarrow \infty$. To see why, let us for instance show how to establish this fact for the second integral on the right hand side of the last inequality. Using $L_\mathrm{H}^* \chi_n(H) = 0$,
\begin{align*}
L^* \chi_n(H) = \gamma \left[ (-|p|^2+d) \chi_n'(H) + |p|^2 \chi_n''(H) \right].    
\end{align*}
Since $W^*$ is strongly integrable, there exist $C \in \mathbb{R}_+$ and $\delta \in (0,1)$ such that  
\[
\left|\int_\X \psi^2 W^* \left[L^*\chi_n(H)\right] d\mu\right| \leq C\|\psi\|_\infty^2 \int_\X \left(|\chi_n'(H)|+ |\chi_n''(H)|\right)(1+|p|^2) \rme^{-\delta H} \, dq\, dp. 
\]
Given the specific form of the Hamiltonian~$H$, there exists $\widetilde{C} \in \mathbb{R}_+$ such that 
\[
\left|\int_\X \psi^2 W^* \left[L^*\chi_n(H)\right] d\mu\right| \leq \widetilde{C}\|\psi\|_\infty^2 \int_\X \left(|\chi_n'(H)| + |\chi_n''(H)|\right) \rme^{-\delta H/2} \, dq\, dp. 
\]
Now, Lemma~\ref{lem:diff}(iii) ensures that $\rme^{-\delta H} \in L^1(dq\,dp)$, so that the right hand side of the above inequality converges to~0 as $n\rightarrow \infty$ by the dominated convergence theorem.  The other term is dealt with in a similar fashion.

The claimed result in~(iii) then follows by integrating~\eqref{eq:preparation_Gronwall_weighted} in time, and passing to the limit $n \to +\infty$ (using the monotone and dominated convergence theorems).
\end{proof}

\subsection*{Acknowledgements.}
D.P.H and E.C. graciously acknowledge support from grants DMS-1612898 (D.P.H.) and DMS-1855504 (D.P.H.) from the National Science Foundation. The work of G.S. in funded in part by the European Research Council (ERC) under the European Union's Horizon 2020 research and innovation programme (grant agreement No 810367), and by the Agence Nationale de la Recherche, under grant ANR-19-CE40-0010-01 (QuAMProcs). M. Gordina was funded from grant DMS-1954264 from the National Science Foundation.

\bibliographystyle{amsplain}
\bibliography{L2_contractivity_v2}

\end{document}